\documentclass[11pt,a4paper,reqno]{amsart}%
\usepackage{a4wide}
\usepackage[table,usenames, dvipsnames]{xcolor}
\usepackage{amsfonts,amsmath,amssymb,amsthm,tabularx,ifthen,twoopt,enumerate}
\usepackage{graphicx}
\usepackage[latin1]{inputenc}
\usepackage[figuresright]{rotating}
\usepackage{natbib}
\usepackage{tikz}


\newcounter{hypA}
\newenvironment{hypA}{\refstepcounter{hypA}\begin{itemize}
  \item[({\bf A\arabic{hypA}})]}{\end{itemize}}

\def\P{\mathbb{P}}

\newcommand{\gp}[1]{\left(#1\right)}
\newcommand{\ga}[1]{\left\{#1\right\}}

\newcommand{\probT}[1]{\P\gp{#1}}

\def \1{\mathbf{1}}
\def\B{\textrm{B}}

\def \Kstar{K^\star}

\newtheorem{theo}{Theorem}

\newtheorem{lemma}{Lemma}

\theoremstyle{remark}
\newtheorem{remark}{Remark}

\newcommand{\corr}[1]{\textcolor{black}{#1}}

\title[Estimating the number of block boundaries without penalization]
{Estimating the number of block boundaries from diagonal blockwise matrices without penalization}

\author{V. Brault, M. Delattre, E. Lebarbier, T. Mary-Huard, C. L\'evy-Leduc}
\address{AgroParisTech/UMR INRA MIA 518}
\email{vincent.brault@agroparistech.fr}

\theoremstyle{plain}

\theoremstyle{definition}

\theoremstyle{remark}

\newcommand{\Y}{Y}
\newcommand{\K}{K}
\newcommand{\ttt}{t}
\newcommand{\D}{D}
\newcommand{\kk}{k}
\newcommand{\n}{n}
\newcommand{\ii}{i}
\newcommand{\jj}{j}
\newcommand{\E}{E}
\newcommand{\R}{R}
\newcommand{\el}{\ell}

\newcommand{\tauu}{\tau}

\DeclareMathOperator*{\argmin}{\operatorname{Argmin}}

\newcommand{\Yij}{\Y_{\ii,\jj}}
\newcommand{\vrai}{\star}
\newcommand{\Ks}{\K^\vrai}
\newcommand{\bts}{\mathbf{\ttt}^\vrai}
\newcommand{\ts}{\ttt^\vrai}
\newcommand{\tsKs}{\ts_{\Ks}}
\newcommand{\Ds}{\D^\vrai}
\newcommand{\Dsk}{\Ds_{\kk}}
\newcommand{\tsk}{\ts_{\kk}}
\newcommand{\muu}{\mu}

\newcommand{\Ez}{\E_0}
\newcommand{\Ezs}{\E_0^\vrai}

\newcommand{\muks}{\muu_{\kk}^\vrai}
\newcommand{\mus}{\muu^\vrai}
\newcommand{\muzs}{\muu_{0}^\vrai}
\newcommand{\bt}{\mathbf{\ttt}}

\newcommand{\bY}{\mathbf{\Y}}
\newcommand{\Dk}{\D_{\kk}}

\newcommand{\muz}{\muu_{0}}

\newcommand{\tK}{\ttt_{\K}}
\newcommand{\tk}{\ttt_{\kk}}


\newcommand{\Q}{Q}
\newcommand{\J}{J}
\newcommand{\V}{V}
\newcommand{\W}{W}

\newcommand{\G}{G}
\newcommand{\Z}{Z}
\newcommand{\A}{A}
\newcommand{\e}{e}
\newcommand{\eps}{\varepsilon}
\newcommand{\Deltaa}{\Delta}
\newcommand{\lambdaa}{\lambda}
\newcommand{\deltaa}{\delta}
\newcommand{\betaa}{\beta}
\newcommand{\gammaa}{\gamma}
\newcommand{\epsij}{\eps_{\ii,\jj}}
\newcommand{\Kmax}{\K_{\max}}
\newcommand{\Qn}{\Q_{\n}}
\newcommand{\YbarDk}{\overline{\Y}_{\!\!\Dk}}

\newcommand{\Jn}{\J_{\n}}
\newcommand{\mbE}{\mathbb{E}}
\newcommand{\Kn}{\B_{\n}}
\newcommand{\Vn}{\V_{\n}}
\newcommand{\Wn}{\W_{\n}}

\newcommand{\Gzero}{\G_{00}}
\newcommand{\Gun}{\G_{01}}
\newcommand{\YbarGun}{\overline{\Y}_{\!\!\Gun}}

\newcommand{\YbarDsk}{\overline{\Y}_{\!\!\Dsk}}
\newcommand{\Jzn}{\Jn^{0}}

\newcommand{\KDn}{\Kn^{\D}}
\newcommand{\VDn}{\Vn^{\D}}
\newcommand{\WDn}{\Wn^{\D}}
\newcommand{\Gszero}{\Gzero^{\vrai}}
\newcommand{\ip}{{\ii'}}
\newcommand{\jp}{{\jj'}}
\newcommand{\epsipjp}{\eps_{\ip,\jp}}
\newcommand{\Zn}{\Z_{\n}}
\newcommand{\Kzn}{\Kn^{0}}
\newcommand{\Vzn}{\Vn^{0}}
\newcommand{\Wzn}{\Wn^{0}}
\newcommand{\nkl}{\n_{\kk,\el}}
\newcommand{\Dsl}{\Ds_{\el}}
\newcommand{\nzl}{\n_{0,\el}}

\newcommand{\nk}{\n_{\kk}}
\newcommand{\nsl}{\n_{\el}^{\vrai}}
\newcommand{\nz}{\n^{0}}
\newcommand{\mcA}{\mathcal{A}}
\newcommand{\AnK}{\mcA_{\n,\K}}
\newcommand{\Deltan}{\Deltaa_{\n}}
\newcommand{\AnKDelta}{\AnK^{\Deltan}}
\newcommand{\AnKsDelta}{\mcA_{\n,\Ks}^{\Deltan}}

\newcommand{\muls}{\muu_{\el}^\vrai}
\newcommand{\lp}{{\el'}}
\newcommand{\mulps}{\muu_{\lp}^\vrai}
\newcommand{\nklp}{\n_{\kk,\lp}}
\newcommand{\lun}{{\el_{1}}}
\newcommand{\muluns}{\muu_{\lun}^\vrai}
\newcommand{\nklun}{\n_{\kk,\lun}}
\newcommand{\ldeux}{{\el_{2}}}
\newcommand{\muldeuxs}{\muu_{\ldeux}^\vrai}
\newcommand{\nkldeux}{\n_{\kk,\ldeux}}

\newcommand{\btau}{\boldsymbol{\tauu}}
\newcommand{\btaus}{\btau^{\vrai}}
\newcommand{\Deltataus}{\Deltaa_{\btau}^{\vrai}}
\newcommand{\taus}{\tauu^{\vrai}}
\newcommand{\tausk}{\taus_{\kk}}

\newcommand{\lambdainf}{\underline{\lambdaa}}
\newcommand{\tl}{\ttt_{\el}}
\newcommand{\thatl}{\that_{\el}}
\newcommand{\thatk}{\that_{\kk}}
\newcommand{\tsl}{\ts_{\el}}

\newcommand{\tp}{{\ttt'}}
\newcommand{\mcI}{\mathcal{I}}

\newcommand{\lambdasup}{\overline{\lambdaa}}

\newcommand{\Prob}{\mathbb{P}}

\newcommand{\lambdainfz}{{\lambdainf^{(0)}}}

\newcommand{\elpk}{\e_{\lp,\kk}}

\newcommand{\nl}{\n_{\el}}
\newcommand{\bthat}{\widehat{\bt}}
\newcommand{\that}{\widehat{\ttt}}
\newcommand{\bthatK}{\bthat_{\K}}
\newcommand{\bthatKhat}{\bthat_{\Khat}}

\newcommand{\Khat}{\widehat{\K}}

\newcommand{\mcID}{\mcI_{\D}}
\newcommand{\kp}{\kk'}
\newcommand{\Dkp}{\D_{\kp}}

\newcommand{\mcIRK}{\mcI_{\R}^{\Kmax}}

\newcommand{\Dl}{\D_{\el}}

\newcommand{\ensemblelemme}{\left\{\bt\in\AnK^{1/\n},
\left\|\bt-\bts\right\|_{\infty} > \n\delta \right\}}
\newcommand{\ensemblelemmestar}{\left\{\bt\in\mcA_{\n,\Ks}^{1/\n}, \left\|\bt-\bts\right\|_{\infty} > \n\delta \right\}}

\newcommand{\compl}[1]{{#1}^C}

\usepackage{hyperref}

\date{\today}

\begin{document}

\maketitle

\begin{abstract}
In computational biology, numerous recent studies have been dedicated to the analysis of the chromatin structure 
within the cell by two-dimensional segmentation methods. Motivated by this application, we consider the problem of retrieving 
the diagonal blocks in a matrix of observations.
\textcolor{black}{The theoretical properties of the least-squares estimators of both the boundaries and
the number of blocks proposed by \cite{levy2014two} are investigated. More precisely, the contribution of the paper is
to establish the consistency of these estimators. \corr{A surprising
  consequence of our results is that, contrary to the one-dimensional
  case, a penalty is not needed} for retrieving the true number of diagonal blocks.} Finally, the results are illustrated on synthetic data.


\end{abstract}


\section{Introduction}

Detecting change-points in one-dimensional signals is a very important task which arises in
many applications, ranging from EEG (Electroencephalography) to speech processing and network intrusion detection,
see \cite{basseville:1993,brodsky:2000,tartakovsky:nikiforov:basseville:2014}. The aim of such approaches
is to split a signal into several homogeneous segments according to some quantity.
A large literature has been dedicated to the change-point detection issue for one-dimensional data.
This problem may also have several applications when dealing with two-dimensional data.
One of the main situations in
which this problem occurs is the detection of chromosomal regions having close spatial location in
the nucleus of a cell. Detecting such regions provides valuable insight to understand the influence of chromosomal
conformation on cell functioning.
More precisely, we will consider the problem of identifying the so-called \emph{cis}-interactions between regions of a chromosome. In this context, $n$ locations spatially ordered along a given chromosome are considered, the goal being to find clusters of adjacent locations that strongly interact. The elements $\Yij$ of a data matrix $\mathcal{Y}$ will then correspond to the interaction level between locations $i$ and $j$ of a chromosome, which can be measured using the recently developed HiC technologies, see \cite{dixon2012topological}. In this application, the signal - and consequently the data matrix - exhibits a strong structure: one should observe high signal levels within blocks of locations along the matrix diagonal, and a signal that is close to some (low) baseline level everywhere else.

As shown in \cite{levy2014two}, the identification of \emph{cis}-interactions can be cast as a segmentation problem, where
the goal is to identify diagonal blocks (or regions) with homogeneous interaction levels.
Thanks to the spatial repartition of these regions along the diagonal, the two-dimensional segmentation of the data matrix
actually boils down to a particular one-dimensional segmentation. The dynamic programming algorithm originally proposed by
\cite{bellman1961approximation} is well-known to provide the exact solution of the one-dimensional segmentation issue in the least-squares sense. Therefore we benefit from the data structure by avoiding both the computational burden and the approximation errors that come with heuristic methods used to solve the complex generic problem of two-dimensional segmentation.

While being able to handle large interaction data matrices from an algorithmic point of view, model selection (\textit{i.e.} selecting the number of blocks $K$) remains an open question when dealing with such data. This is contrasted with the problem of one-dimensional signal segmentation, for which the properties of the estimators have been largely addressed for instance in \cite{boysen2009,lavielle2000least,yao:au:1989}.
In these approaches, the number of change-points is usually performed thanks to a Schwarz-like penalty $\lambda_n K$
where $\lambda_n$ is often calibrated on data, as in \cite{lavielle:2005} and \cite{lavielle2000least},
or a penalty $K(a + b \log(n/K))$ as in \cite{lebarbier:2005} and \cite{massart:2004}, where $a$ and $b$ are data-driven as well.

The goal of the present paper is to \textcolor{black}{prove the consistency} of the estimators of both the boundaries and
the number of blocks obtained by minimizing \textcolor{black}{the (slightly modified)} least-squares criterion proposed by \cite{levy2014two}.
The proof relies on the strong structure of the data which is of great help for the model selection issue and for the algorithmic aspects.
More precisely, we will prove that the \emph{non-penalized} least-squares estimators of the number of blocks is consistent.

The paper is organized as follows: Section~\ref{Section: StatFram} introduces the modeling of the data and the definition of the least-squares estimators that will be considered throughout the article. The theoretical properties of the estimators are derived in Section~\ref{Section: TheoResults} and illustrated on synthetic data in Section~\ref{Section: NumExp}. \textcolor{black}{A discussion is given in Section~\ref{Section:Discussion}. The technical aspects of the proofs are detailed in Section~\ref{Section: appendix} and in the supplementary material.}



\section{Statistical Framework}\label{Section: StatFram} 

\subsection{Modeling}

Let us consider $\mathcal{Y}=(Y_{i, j})_{1\leq i,j\leq n}$, a symmetric matrix of random variables.
Because of the symmetry, we shall focus on its upper-triangular part denoted by $\bY=(Y_{i, j})_{1 \leq i \leq j \leq n}$
where the $Y_{i,j}$ will be assumed to be independent and such that
\begin{equation}\label{eq:model}
\Yij=\mathbb{E}\left[\Yij\right]+\epsij=\mu_{i, j}+\epsij,\; 1 \leq i \leq j \leq n.
\end{equation}
The $\epsij$ satisfy the following assumption:
\begin{hypA}\label{hyp:eps}
The $\epsij$ are assumed to be centered, i.i.d. and such that there exists a positive constant $\betaa$
such that for all $\nu\in\mathbb{R}$,
\begin{equation*}
\mathbb{E}\left[e^{\nu\varepsilon_{11}}\right]\leq e^{\betaa\nu^2}.
\end{equation*}
\end{hypA}
We shall moreover assume that the matrix of means $(\mu_{i,j})_{1\leq i\leq j\leq n}$ is block diagonal.
\textcolor{black}{More precisely, let $\btaus = (\taus_0,\taus_1,\dots,\taus_{\Kstar})$ be a vector
of break fractions such that $0=\taus_0<\taus_1<\dots<\taus_{\Kstar}=1$. In what follows, the break fractions are fixed 
quantities: neither their number nor their positions change when $n$ grows.}
The parameters
$\mu_{i, j}$ are such that
\begin{align}\label{eq:bloc_mu}
\mu_{i, j} & =  \mu_k^\star \quad \text{if } (i, j) \in D_k^\star, \; k = 1, \dots, K{^\star}, \nonumber\\
& =  \mu_0^\star \quad \text{if } (i, j) \in E_0^\star,
\end{align}
where the (half) diagonal blocks $D_k^\star$ ($k = 1, \dots, K{^\star}$) are defined as follows,
\begin{equation}\label{eq:Dkstar}
D_k^\star = \{(i,j): t_{k-1}^\star\leq i\leq j\leq t_{k}^\star-1\},
\end{equation}
where \textcolor{black}{$t_k^\star=[n\btaus_k]+1$ are thus such that
$1=t_0^\star<t_1^\star<\dots<t_{\Kstar}^\star=n+1$, $[x]$ denoting the integer part of $x$.} They
stand for the true block boundaries and $\Kstar$ corresponds to the true number of blocks.
In Equation~(\ref{eq:bloc_mu}), $\Ezs$ corresponds to the set of positions lying outside the diagonal blocks:
\begin{equation}\label{eq:E0}
E_0^\star = \{(i,j): 1 \leq i \leq j \leq n\} \cap \compl{\left(\cup D_k^\star  \right)},
\end{equation}
where $\compl{A}$ denotes the complement of set $A$.
An example of such a matrix is displayed in Figure \ref{fig:exemple} (left).
 The following will also be assumed for the true block sizes:
\begin{hypA}\label{Hypothese:TailleMaxBlocks}
For all $\ell$, one has
\begin{eqnarray*}
0<\Deltataus=\underset{\kk\in\{1,\ldots,\K^{\vrai}\}}{\min}\;\;\left|\tausk-\taus_{\kk-1}\right| \leq |\taus_{\ell+1}-\taus_\ell| \leq c,
\end{eqnarray*}
where $c \in (0,1)$ is a known constant. 
\end{hypA}

Moreover the $\muks$ satisfy the following assumption:
\begin{hypA}\label{Hypothese:mon_rupt}
\hspace{4cm}$\displaystyle{\lambdainfz=\underset{1\leq\kk\leq\Ks}{\min}\left|\muks-\muzs\right|>0}$.
\end{hypA}


\subsection{Inference}
In this framework, the inference consists in estimating both the number of blocks and the true break fraction vector $\btaus$ (or equivalently the true boundary vector $\bts$). One strategy would be to use the following least-squares criterion:
\begin{eqnarray}\label{eq:LS_criterion}
\bthatK^{\textrm{LS}} \in \argmin_{\bt\in\AnKDelta} \ \ga{ \left[\sum_{\kk=1}^{\K}{\sum_{(\ii,\jj)\in\Dk}{\left(\Yij-\YbarDk\right)^2}}\right]+\sum_{(\ii,\jj)\in\Ez}{\left(\Yij-\overline{\Y}_{\!\!\Ez}\right)^2} }, 
\end{eqnarray}
where $\overline{\Y}_{\!\!\mathcal{D}}$ is the empirical mean of the $Y_{i,j}$ when the indices $(i,j)$ belong to $\mathcal{D}$, $\Dk$ and $\Ez$ are defined as in (\ref{eq:Dkstar}) and (\ref{eq:E0}) except that $\bts$ is replaced by $\bt$, and $K$ is the considered number of segments -- $\Kstar$
being unknown in practice. Moreover,
\begin{multline}\label{eq:AnK}
\AnKDelta=\left\{\bt=(\ttt_0,\ldots,\ttt_{\K}):\ttt_0=1<\ttt_1<\ldots<\ttt_{K}=\n+1\right.\\
          \left.\quad\text{ and }\forall 1\leq\kk\leq\K,\;\n\Deltan\leq\tk-\ttt_{\kk-1}< c\n\right\}
\end{multline}
is the set of admissible segmentations, where $\Delta_n$ denotes a positive sequence.

%
However, thanks to~(A\ref{Hypothese:TailleMaxBlocks}), one can derive an unbiased estimator of $\muzs$ using the upper-right triangle part of the matrix $\mathcal{Y}$ denoted $\Gun$ and defined by
\begin{equation}\label{eq:G01}
\Gun = \{(i, j): 1\leq i\leq \nz,(n-\nz+1)\leq j\leq n\}\quad \textrm{with } \nz = \left[(1-c)n\right].
\end{equation}
Indeed the intersection between the blocks $\Dk$ and $\Gun$ will always be empty. Thus, we can split $E_0^\star$ into two disjoint sets $\Gszero$ and $\Gun$ (see the right part of Figure~\ref{fig:exemple}) as follows,
\begin{equation}\label{eq:G00:G01}
E_0^\star=\Gszero\cup\Gun.
\end{equation}
Consequently, we will consider the following slightly modified least-squares criterion:
\begin{equation}\label{def:Qn}
\bthatK \in \argmin_{\bt\in\AnKDelta} \ \Qn^{\K}(\bt),
\end{equation}
where
\begin{equation}\label{def:QnK}
\Qn^{\K}(\bt)=\ga{\left[\sum_{\kk=1}^{\K}{\sum_{(\ii,\jj)\in\Dk}{\left(\Yij-\YbarDk\right)^2}}\right] +\sum_{(\ii,\jj)\in\Ez}{\left(\Yij-\YbarGun\right)^2}}.
\end{equation}
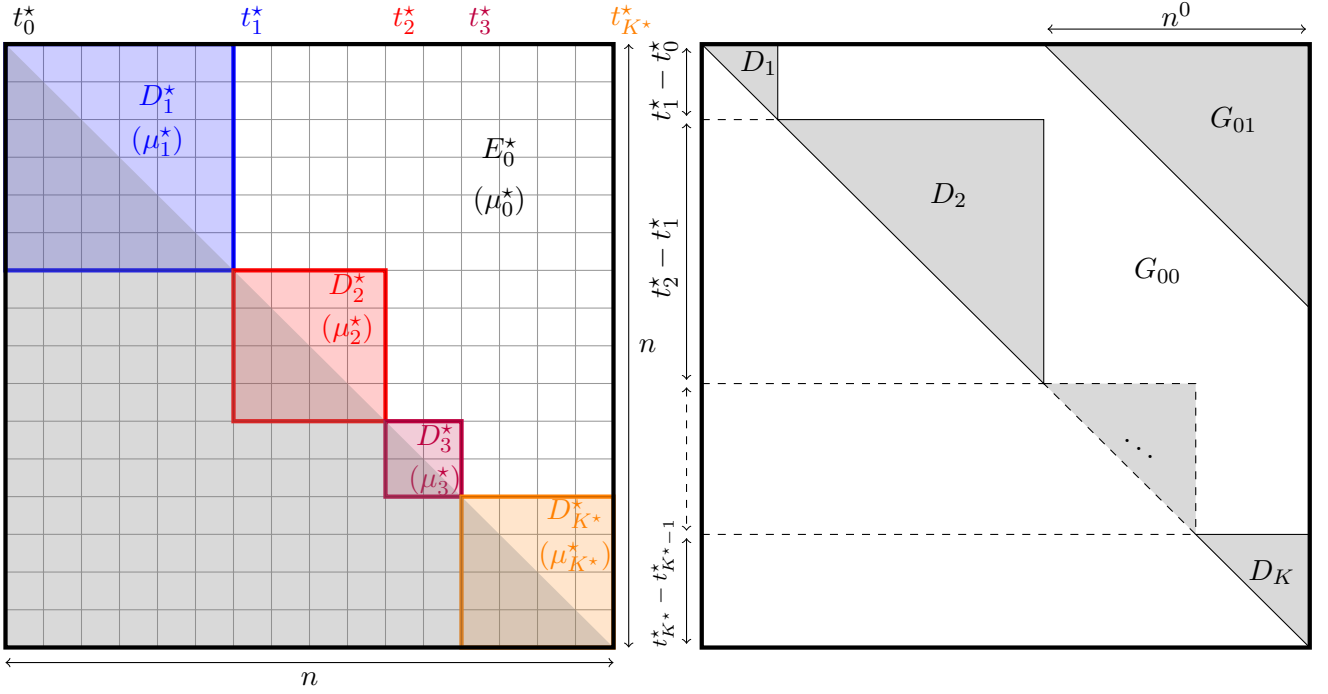
\begin{figure}[!h]
\begin{center}
\begin{tabular}{cc}
\hspace{-9mm}
\begin{tikzpicture}
\draw[opacity=0.4] (0,0) grid[step=0.5] (8,8);
\draw[<->] (0,-0.2) -- (8,-0.2);
\draw (4,-0.2) node[below]{$n$};
\draw[<->] (8.2,0) -- (8.2,8);
\draw (8.2,4) node[right]{$n$};
\draw[ultra thick,color=blue] (0,8) rectangle (3,5);
\draw[fill=blue,opacity=0.2] (0,8) rectangle (3,5);
\draw (3.25,8) node[above,color=blue]{$\ts_1$};
\draw (2,6.95) node[above,color=blue]{$\Ds_1$};
\draw (2,7.05) node[below,color=blue]{$(\mus_1)$};
\draw[ultra thick,color=red] (3,5) rectangle (5,3);
\draw[fill=red,opacity=0.2] (3,5) rectangle (5,3);
\draw (5.25,8) node[above,color=red]{$\ts_2$};
\draw (4.5,4.45) node[above,color=red]{$\Ds_2$};
\draw (4.5,4.55) node[below,color=red]{$(\mus_2)$};
\draw[ultra thick,color=purple] (5,3) rectangle (6,2);
\draw[fill=purple,opacity=0.2] (5,3) rectangle (6,2);
\draw (6.25,8) node[above,color=purple]{$\ts_3$};
\draw (5.65,2.45) node[above,color=purple]{$\Ds_3$};
\draw (5.65,2.55) node[below,color=purple]{$(\mus_3)$};
\draw[ultra thick,color=orange] (6,2) rectangle (8,0);
\draw[fill=orange,opacity=0.2] (6,2) rectangle (8,0);
\draw (8.25,8) node[above,color=orange]{$\tsKs$};
\draw (7.5,1.45) node[above,color=orange]{$\Ds_{\Ks}$};
\draw (7.5,1.55) node[below,color=orange]{$(\mus_{\Ks})$};

\draw (6.5,6.25) node[above]{$\Ezs$};
\draw (6.5,6.25) node[below]{$(\muzs)$};

\draw (0.25,8) node[above]{$\ts_0$};

\fill[color=gray,opacity=0.3] (0,0) -- (0,8) -- (8,0) -- cycle;
\draw[ultra thick] (0,0) rectangle (8,8);
\end{tikzpicture}
\hspace{-7mm}
\begin{tikzpicture}

\fill[color=gray,opacity=0.3] (0,8) -- (1,8) -- (1,7) -- cycle;
\draw (0,8) -- (1,8) -- (1,7) -- cycle;
\draw (0.75,7.75) node{$\D_{1}$};
\fill[color=gray,opacity=0.3] (1,7) -- (4.5,7) -- (4.5,3.5) -- cycle;
\draw (1,7) -- (4.5,7) -- (4.5,3.5) -- cycle;
\draw (3.25,6) node{$\D_{2}$};
\fill[color=gray,opacity=0.3] (4.5,3.5) -- (6.5,3.5) -- (6.5,1.5) -- cycle;
\draw[dashed] (4.5,3.5) -- (6.5,3.5) -- (6.5,1.5) -- cycle;
\draw (5.75,2.75) node{$\ddots$};
\fill[color=gray,opacity=0.3] (6.5,1.5) -- (8,1.5) -- (8,0) -- cycle;
\draw (6.5,1.5) -- (8,1.5) -- (8,0) -- cycle;
\draw (7.5,1) node{$\D_{\K}$};

\draw[<->] (-0.2,7.95) -- (-0.2,7.05);
\draw (-0.5,8.2) node[left,rotate=90]{$\ts_{1}-\ts_{0}$};
\draw[dashed] (0,7) -- (1,7);
\draw[<->] (-0.2,6.95) -- (-0.2,3.55);
\draw (-0.5,5.85) node[left,rotate=90]{$\ts_{2}-\ts_{1}$};
\draw[dashed] (0,3.5) -- (4.5,3.5);
\draw[<->,dashed] (-0.2,3.45) -- (-0.2,1.55);
\draw[dashed] (0,1.5) -- (6.5,1.5);
\draw[<->] (-0.2,1.45) -- (-0.2,0.05);
\draw (-0.5,1.85) node[left,rotate=90]{\scriptsize{$\ts_{\Ks}-\ts_{\Ks-1}$}};

\fill[color=gray,opacity=0.3] (4.5,8) -- (8,8) -- (8,4.5) -- cycle;
\draw (4.5,8) -- (8,8) -- (8,4.5) -- cycle;
\draw (7,7) node{$\Gun$};
\draw (6,5) node{$\Gzero$};
\draw[<->] (4.55,8.2) -- (7.95,8.2);
\draw (6.25,8.1) node[above]{$\nz$};

\draw[ultra thick] (0,0) rectangle (8,8);

\draw (1,-0.5) node{ $ $ };
\end{tikzpicture}
\end{tabular}
\end{center}
\caption{\label{fig:exemple} Left: Example of a matrix $(\mu_{i,j})$ with $\n=16$ and $\Ks=4$. Right: Illustration of the notations used in the estimation criterion.}
\end{figure}
Lastly, we will consider the following estimator of $\Kstar$:
\begin{equation}\label{eq:def:Khat}
\Khat=\argmin_{1\leq K\leq \Kmax} \Qn^{\K}\left(\bthatK\right),
\end{equation}
where $\bthatK$ is defined in \eqref{def:Qn} and $\Kmax$ is the maximal number of blocks considered.

\corr{Criterion (\ref{eq:def:Khat}) based on (\ref{def:QnK}) has been proposed by \cite{levy2014two}. The goal of our paper
is to validate this latter approach theoretically. Note that the main
difference between (\ref{eq:LS_criterion}) and (\ref{def:QnK}) is the
estimation of $\muzs$ that is independent from the
segmentation, since $\Gun$ is fixed. Hence, $\muzs$ can be estimated prior to the
optimization of the criterion (\ref{def:QnK}). As a consequence, this
optimization can be performed by  using the dynamic programming
algorithm as explained in \cite{levy2014two}.
}

\section{Theoretical results}\label{Section: TheoResults}
The goal of this section is to derive the consistency of  $\Khat$ and $\boldsymbol{\hat{\tau}}$.
To prove these results, we shall need the following assumption on $\Deltan$:
\corr{
\begin{hypA}\label{Hypothese:ConsistanceNbRuptureMax1}
\hspace{2cm}$\displaystyle{\Deltan\frac{\sqrt{\n}}{(\log\n)^{1/4}}\underset{\n\rightarrow+\infty}{\longrightarrow}+\infty}$
\textrm{ and}\quad $\Deltan\leq \Deltataus$, \textrm{ for large enough } $n$.
\end{hypA}
}


%

\begin{theo}\label{theo:theo}
Let $\Yij$ be defined by (\ref{eq:model}). Assume that (A\ref{hyp:eps}), (A\ref{Hypothese:TailleMaxBlocks}), (A\ref{Hypothese:mon_rupt}) and (A\ref{Hypothese:ConsistanceNbRuptureMax1}) hold. Then $\Khat$
defined in (\ref{eq:def:Khat}) is such that:
\begin{equation}\label{eq:theo}
\P\left(\Khat\neq\Ks\right)\longrightarrow 0,\textrm{ as } n\to+\infty.
\end{equation}
\end{theo}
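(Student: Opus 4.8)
The plan is to prove that, with probability tending to one, $\Qn^\K(\bthatK)>\Qn^{\Ks}(\bthat_{\Ks})$ for every $\K\neq\Ks$; since $\Khat$ minimises $\K\mapsto\Qn^\K(\bthatK)$ over the finite set $\{1,\dots,\Kmax\}$ and both $\bts$ and $\bthat_{\Ks}$ are admissible for large $n$, a union bound over the offending values of $\K$ then gives \eqref{eq:theo}. The engine of the argument is a bias--variance decomposition of the criterion: writing $\Yij=\muij+\epsij$ and expanding the squares, one obtains, for every $\bt\in\AnKDelta$,
\begin{equation*}
\Qn^\K(\bt)=\sum_{1\le i\le j\le n}\epsij^2+\mathrm{Bias}(\bt)+\mathrm{Cross}(\bt)-\mathrm{Var}(\bt),
\end{equation*}
where $\mathrm{Bias}(\bt)=\mathrm{Bias}_A(\bt)+\mathrm{Bias}_B(\bt)\ge 0$. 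Here $\mathrm{Bias}_A(\bt)=\sum_{\kk}\sum_{(i,j)\in\Dk}(\muij-\overline{\muu}_{\Dk})^2$ measures how far the true mean is from being constant on each estimated diagonal block, and $\mathrm{Bias}_B(\bt)=\sum_{(i,j)\in R(\bt)}(\muij-\muzs)^2\ge(\lambdainfz)^2\lvert R(\bt)\rvert$, with $R(\bt)=(\bigcup_{\kk=1}^{\Ks}\Dsk)\setminus(\bigcup_{\kk=1}^{\K}\Dk)$, measures the amount of genuine block material wrongly assigned to $\Ez$ -- where the criterion compares it to $\YbarGun$, an estimator of $\muzs$ built from the \emph{fixed} region $\Gun$. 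Finally $\mathrm{Cross}(\bt)$ gathers the signal--noise cross terms and $\mathrm{Var}(\bt)\ge 0$ the within-block empirical variances of the noise and the fluctuation of $\YbarGun$ around $\muzs$. Since $\mathrm{Bias}(\bts)=\mathrm{Cross}(\bts)=0$ and $\mathrm{Var}(\bts)=O_\P(1)$, one has $\Qn^{\Ks}(\bthat_{\Ks})\le\Qn^{\Ks}(\bts)=\sum_{i\le j}\epsij^2+O_\P(1)$. Using the sub-Gaussian bound of~(A\ref{hyp:eps}), a maximal inequality over the $O(n^2)$ possible half-blocks, and a union bound over the at most $n^{\Kmax}$ admissible segmentations, I would show, uniformly over $\bt\in\AnKDelta$ and $\K\le\Kmax$, that $\mathrm{Var}(\bt)=O_\P(\Kmax\log n)$ and $\lvert\mathrm{Cross}(\bt)\rvert\le\tfrac12\mathrm{Bias}(\bt)+O_\P(\Kmax\log n)$ (the latter by Cauchy--Schwarz and $2\sqrt{ab}\le\tfrac12 a+2b$). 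Combining the pieces, with probability tending to one and uniformly in $\bt$,
\begin{equation*}
\Qn^\K(\bt)-\Qn^{\Ks}(\bthat_{\Ks})\ \ge\ \tfrac12\,\mathrm{Bias}(\bt)-C\,\Kmax\log n,
\end{equation*}
so everything reduces to a deterministic lower bound on $\mathrm{Bias}(\bthatK)$ when $\K\neq\Ks$.

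\emph{Under-estimation ($\K<\Ks$).} By the pigeonhole principle some estimated block $\Dk$ has an index range covering the ranges of two consecutive true blocks and of the rectangle between them; that rectangle carries the mean $\muzs$, which differs from the adjacent true-block mean by at least $\lambdainfz$, and it has area of order at least $(n\Deltataus)^2$ by~(A\ref{Hypothese:TailleMaxBlocks}). Hence $\mathrm{Bias}(\bthatK)\ge\mathrm{Bias}_A(\bthatK)\ge c\,n^2\Deltataus^2(\lambdainfz)^2$, of exact order $n^2$, which dominates $C\Kmax\log n$ for $n$ large; thus $\Qn^\K(\bthatK)>\Qn^{\Ks}(\bthat_{\Ks})$ with probability tending to one.

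\emph{Over-estimation ($\Ks<\K\le\Kmax$).} This step, which I expect to be the main obstacle, is where the surprising conclusion is located. As the estimated segmentation has strictly more than $\Ks-1$ interior change-points, at least one of them, $\ttt^\bullet$, is not a true boundary and so lies strictly inside the range of some true block $\Dsk$. A geometric lemma -- crucially exploiting that every estimated block has range-length at least $n\Deltan$ (definition of $\AnKDelta$) and that $\Deltan\le\Deltataus$~(A\ref{Hypothese:ConsistanceNbRuptureMax1}) -- then shows one is necessarily in one of two cases: either the two estimated blocks adjacent to $\ttt^\bullet$ both lie inside the range of $\Dsk$, in which case a rectangle of block material of area at least $(n\Deltan)^2$ is thrown into $\Ez$ and $\mathrm{Bias}_B(\bthatK)\ge(\lambdainfz)^2(n\Deltan)^2$; or one of those two blocks straddles a true boundary, in which case it simultaneously contains material of two distinct true means and an elementary two-group computation gives $\mathrm{Bias}_A(\bthatK)\ge c\,(\lambdainfz)^2 n\Deltan$. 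In both cases $\mathrm{Bias}(\bthatK)\ge c\,(\lambdainfz)^2 n\Deltan$, and~(A\ref{Hypothese:ConsistanceNbRuptureMax1}) forces $n\Deltan\gg\sqrt n\,(\log n)^{1/4}\gg\Kmax\log n$, so $\Qn^\K(\bthatK)>\Qn^{\Ks}(\bthat_{\Ks})$ with probability tending to one. This is exactly the mechanism that makes a penalty unnecessary: increasing $\K$ beyond $\Ks$ inevitably enlarges $\Ez$, and because $\muzs$ is estimated from the fixed block $\Gun$ rather than from $\Ez$ itself, the criterion is charged for this enlargement. The delicate point is the geometric lemma: relying only on the admissibility gap $n\Deltan$ and on the fixed true-block sizes, one must exclude that a spurious change-point be absorbed at a cost below the stochastic fluctuations -- which is precisely what the rate condition in~(A\ref{Hypothese:ConsistanceNbRuptureMax1}) is calibrated to prevent -- while keeping those fluctuations under control uniformly over all admissible segmentations. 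Summing the two regimes over $\K\in\{1,\dots,\Kmax\}\setminus\{\Ks\}$ yields $\P(\Khat\neq\Ks)\to0$.
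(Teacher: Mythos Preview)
Your overall strategy---the bias--variance decomposition of the criterion, uniform control of the stochastic terms at order $O_\P(\Kmax\log n)$, and a deterministic lower bound on the bias---is the paper's strategy (the paper writes the decomposition as $\Jn=\Kn+\Vn+\Wn+\Zn$ and proves the bias lower bounds in Lemma~\ref{Lemme:MinorationKn}). Two points on the over-estimation regime deserve comment.

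First, for $K>\Ks$ the paper needs only your $\mathrm{Bias}_B$ and never enters your Case~2 / $\mathrm{Bias}_A$ branch. The trick is a sharper pigeonhole than ``some estimated change-point is not a true one'': since the estimated change-points are at least $n\Deltan$ apart, and (by $\Deltan\le\Deltataus$ in (A\ref{Hypothese:ConsistanceNbRuptureMax1})) so are the true ones, whenever $K>\Ks$ there is an estimated $t_\ell$ at distance $>n\Deltan/2$ from \emph{every} $\tsk$. That $t_\ell$ sits deep inside some true block $\Dsk$, and the rectangle $\{i<t_\ell\le j\}\cap\Dsk$---which is always thrown into $\Ez$, irrespective of the other estimated change-points---already has area $\ge(n\Deltan/2)^2$. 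This gives $\mathrm{Bias}_B\ge c(\lambdainfz)^2(n\Deltan)^2$ directly, which is both stronger than your $n\Deltan$ and avoids the case split.

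Second, your Case~2 ``elementary two-group computation'' is mis-stated. The two adjacent \emph{diagonal} means $\muks$ and $\mu_{k+1}^\star$ may coincide---Assumption~(A\ref{Hypothese:mon_rupt}) only separates each of them from $\muzs$---so comparing the two triangles gives nothing, and if you lump the triangles into a single group, their weighted mean can equal $\muzs$ exactly, making that two-group bound vanish. The computation that does work keeps all three pieces (left triangle, rectangle, right triangle) and uses the pairwise term rectangle--versus--larger-triangle in the identity $\sum_i s_i(m_i-\bar m)^2=\tfrac{1}{S}\sum_{i<j}s_is_j(m_i-m_j)^2$; this yields $\mathrm{Bias}_A\ge c(\lambdainfz)^2 n\Deltan$ as you claim. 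So your conclusion in Case~2 is correct, but not via the stated route. Note finally that your adaptive control $|\mathrm{Cross}|\le\tfrac12\mathrm{Bias}+O_\P(\log n)$ is tighter than the paper's cruder bound on $\Wn$; this is what lets you get away with the weaker $n\Deltan$ bias lower bound. With the paper's organisation of the random terms, the $(n\Deltan)^2$ bound is what drives the specific rate in (A\ref{Hypothese:ConsistanceNbRuptureMax1}).
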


\begin{remark}\label{rem1}
Observe that, contrary to classical statistical frameworks, $\Khat$ is a consistent estimator of $\Ks$ even if it is
obtained without any penalization.
\end{remark}


\begin{remark}
In Theorem \ref{theo:theo}, the estimator $\Khat$ is defined as the minimizer of $Q_n^{K}(\bthatK)$ where $\bthatK$ is obtained by minimizing
$Q_n^{K}(\bt)$ over the set $\AnKDelta$. If we are only interested in proving that \textcolor{black}{$\P(\Khat<K^{\star})\to 0$}, the minimization can be performed  on the set  $\AnK^{1/n}$  instead of $\AnKDelta$, $\textit{i.e}$ without any constraint on the minimal distance between two consecutive change-points, see Lemma \ref{Lemme:MinorationKn} (i) below and Lemmas \ref{lem:maj:Vn}, \ref{lem:maj:Wn} and \ref{Lemme:InegaliteConcentrationZn}, which
are given in Section \ref{Section: appendix}.
\end{remark}

\begin{remark}
Theorem \ref{theo:theo} is valid under (A\ref{Hypothese:TailleMaxBlocks}) which implies that the number of observations
within each segment increases linearly with $n$, since $t_k^\star=[n\btaus_k]+1$. This assumption could be alleviated by assuming that $\Deltataus$ is no longer a constant. 
In that case, we shall need to assume that $\Deltataus n^{1/4}/(\log n)^{1/8}$ tends to infinity, as $n$ tends to infinity.
\end{remark}

\corr{
\begin{remark}
%
%
The assumption $\Deltan\gg (\log\n)^{1/4}/\sqrt{n}$ of
(A\ref{Hypothese:ConsistanceNbRuptureMax1}) can be understood in the
light of Lemma \ref{Lemme:MinorationKn} (ii) and Equation (\ref{eq:equation:fin:theo}) at the
end of the proof of Theorem \ref{theo:theo}. It is required to ensure
the convergence to zero of the exponential inequalities of the
random parts given in Lemmas \ref{lem:maj:Vn}, \ref{lem:maj:Wn} and \ref{Lemme:InegaliteConcentrationZn}.
%
%
This assumption 
is only required for proving that $\Prob\left(\Khat>\Ks\right)$ tends to zero as $n$ tends to infinity. As a consequence, 
when the number of blocks is known ($\Khat=K^\star$), the break fractions consistency is obtained in our paper when $\Delta_n=1/n$. 
Such a choice is impossible in the one-dimensional segmentation framework of \cite{lavielle2000least} since it is required that 
$n\Delta_n\to+\infty$ and $\Delta_n\to 0$, as $n$ tends to infinity, in order to obtain the break fractions consistency
when the number of breaks is known. 
%
\end{remark}
}

\textcolor{black}{
\begin{remark}
\corr{In practice, $c$ has to be chosen in order to use the top 
right part of the matrix of observations to estimate the parameter $\mu_0^\star$. 
This choice can either come from a prior biological knowledge or from a simple visualization of
the data. In the case of the analysis of HiC data, the size of the interaction
diagonal blocks are expected to be small compared with the size of the
chromosome \textit{i.e.} the size of the data matrix. In this context,
$c=3/4$ can be safely chosen, as suggested in \cite{levy2014two}.} 
If the value of $c$ is misspecified, the estimator of $\mu_0^\star$ is biased. The consistency result of Theorem 1 still holds
if (A3) is replaced by $\underset{1\leq k\leq K^\star}{\min}\left|\mu_k^\star-\mathbb{E}(\bar{Y}_{G_{01}})\right|>0$.
\end{remark}
}


\begin{proof}[Sketch of proof of Theorem \ref{theo:theo}]
In order to prove (\ref{eq:theo}), we shall prove that 
\textcolor{black}{
$\Prob\left(\Khat<\Ks\right)$ and $\Prob\left(\Khat>\Ks\right)$
tend to zero as $n$ tends to infinity. Note that
$$
\Prob\left(\Khat<\Ks\right)\leq\sum_{K=1}^{\Ks -1}\P\left(\Khat=\K\right)
\textrm{ and }\Prob\left(\Khat>\Ks\right)\leq\sum_{K=\Ks+1}^{\Kmax}\P\left(\Khat=\K\right).
$$
Hence, we shall prove that
}
for $\K<\Ks$ and $\K>\Ks$,
\begin{equation*}
\P\left(\Khat=\K\right)\longrightarrow 0,\textrm{ as } n\to+\infty.
\end{equation*}
\corr{Observe that by definition of $\Khat$ given in (\ref{eq:def:Khat}), 
\begin{align*}
\Prob\left(\Khat=\K\right)&\leq\Prob\left(\underset{\bt\in\AnKDelta}{\min}\Qn^{\K}(\bt)-\underset{\bt\in\AnKsDelta}{\min}\Qn^{\K^\star}(\bt)\leq
0\right)\\
&\leq
\Prob\left(\underset{\bt\in\AnKDelta}{\min}\Qn^{\K}(\bt)-\Qn^{\K^\star}(\bts)\leq
0\right),
\end{align*}
since, for large enough $n$, $\Deltan\leq \Deltataus$, and hence
  $\bts$ belongs to $\AnKsDelta$. Thus, we shall focus on}
\begin{equation*}
\Prob\left(\underset{\bt\in\AnKDelta}{\min}\Jn(\bt)\leq0\right),
\end{equation*}
where
\begin{equation}\label{eq:def_Jn}
\Jn(\bt)=\frac{2}{\n(\n+1)}\left(\Qn^{\K}(\bt)-\Qn^{\K^\star}(\bts)\right),
\end{equation}
We shall prove in \textcolor{black}{the supplementary material} that
\begin{equation}\label{eq:dec:Jn}
\Jn(\bt)=\Kn(\bt)+\Vn(\bt)+\Wn(\bt)+\Zn(\bt),
\end{equation}
where $\Kn$, $\Vn$, $\Wn$ and $\Zn$ are defined by (\ref{eq:Kn_Vn_Wn}), (\ref{eq:KD_Kz}), (\ref{eq:VD_Vz}), (\ref{eq:WD_Wz})
and (\ref{eq:Z}) \textcolor{black}{in Section \ref{Section: appendix}}.
In (\ref{eq:dec:Jn}), $\Kn$ corresponds to the deterministic part and the other terms correspond to the random part of $\Jn$.


The remainder of the proof is based on Lemma~\ref{Lemme:MinorationKn}, which is proved in Section \ref{Annexe:MinorationKn} and
which provides a lower bound for the deterministic part of $\Jn$,
and on Lemmas \ref{lem:maj:Vn}, \ref{lem:maj:Wn} and \ref{Lemme:InegaliteConcentrationZn}, 
\textcolor{black}{given in Section \ref{Section: appendix}},
which provide deviation inequalities for the random terms of $\Jn$.

\begin{lemma}\label{Lemme:MinorationKn}
Let $\Kn(\bt)$ be defined by (\ref{eq:Kn_Vn_Wn}) and (\ref{eq:KD_Kz}),
then
\begin{enumerate}[(i)]
\item if $K<K^{\vrai}$,
\begin{equation*} 
\min_{\bt \in \AnK^{1/\n}} \Kn(\bt)\geq \frac{\lambdainfz^2}{64}\left(\Deltataus\right)^4,
\end{equation*}
\item if $K>K^{\vrai}$,
\begin{equation*}
\min_{\bt \in \AnKDelta} \Kn(\bt)\geq  \frac{\lambdainfz^2}{4}\Deltan^2,
\end{equation*}
\item if $K=K^{\vrai}$, for all positive $\delta$,
\begin{equation} \label{Eq:MinorationKn-vrai}
\min_{\ensemblelemme} \Kn(\bt)\geq \frac{\lambdainfz^2}{32}
 \min\left( \Deltataus/2 ,\delta \right)\left(\Deltataus\right)^3,
\end{equation}
\end{enumerate}
where $\Deltataus$ is defined in (A\ref{Hypothese:TailleMaxBlocks}), $\lambdainfz$ is defined in (A\ref{Hypothese:mon_rupt}) and 
$\AnKDelta$ is defined in \eqref{eq:AnK}. $\AnK^{1/\n}$ is a particular case with $\Delta_n=1/n$ and
\begin{equation}\label{eq:norm_inf}
\left\|\bt-\bts\right\|_{\infty}=\max_{0\leq\kk\leq\Ks}\left|\tk-\tsk\right|.
\end{equation}
\end{lemma}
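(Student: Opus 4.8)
The plan is to work from the explicit shape of the deterministic part. In the decomposition $\Jn(\bt)=\Kn(\bt)+\Vn(\bt)+\Wn(\bt)+\Zn(\bt)$, the term $\Kn$ collects precisely the contribution of the means $(\muij)$, and it further splits as $\Kn(\bt)=\KDn(\bt)+\Kzn(\bt)$ with
\[\KDn(\bt)=\frac{2}{\n(\n+1)}\sum_{\kk=1}^{\K}\sum_{(\ii,\jj)\in\Dk}\gp{\muij-\overline{\muu}_{\Dk}}^{2},\qquad \Kzn(\bt)=\frac{2}{\n(\n+1)}\sum_{(\ii,\jj)\in\Ez\setminus\Ezs}\gp{\muij-\muzs}^{2},\]
where $\overline{\muu}_{\Dk}=\mathbb{E}[\YbarDk]$ is the average of the means over $\Dk$; this uses that the true blocks $\Dsk$ are homogeneous and that $\mathbb{E}[\YbarGun]=\muzs$ for every $\bt$, since $\Gun$ never meets any $\Dk$. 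Both pieces are nonnegative, so in each regime it is enough to retain one conveniently chosen family of index pairs and bound its contribution from below, using the elementary estimate: if $G\subseteq\Dk$ splits as $G=G_{1}\sqcup G_{2}$ with $\muij\equiv m_{1}$ on $G_{1}$ and $\muij\equiv m_{2}$ on $G_{2}$, then, dropping the pairs outside $G$ and optimising the centering constant, $\sum_{(\ii,\jj)\in\Dk}\gp{\muij-\overline{\muu}_{\Dk}}^{2}\ge\frac{|G_{1}||G_{2}|}{|G_{1}|+|G_{2}|}(m_{1}-m_{2})^{2}\ge\frac12\min(|G_{1}|,|G_{2}|)(m_{1}-m_{2})^{2}$, and likewise for $\Kzn$ with centering constant $\muzs$. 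All the mean gaps that arise below are of the form $|\muks-\muzs|\ge\lambdainfz$, by (A\ref{Hypothese:mon_rupt}).

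For $\K<\Ks$, a packing argument isolates a ``lonely'' interior true boundary. The $\Ks-1$ intervals $(\tsl-\n\Deltataus/2,\tsl+\n\Deltataus/2)$, $1\le\ell\le\Ks-1$, are pairwise disjoint and avoid the endpoints $\ts_{0}=1$ and $\ts_{\Ks}=\n+1$ because the true blocks have length $\ge\n\Deltataus$ by (A\ref{Hypothese:TailleMaxBlocks}); hence the $\K-1<\Ks-1$ interior fitted boundaries miss at least one of these intervals, and there is $\ell$ with no fitted boundary in $(\tsl-\n\Deltataus/2,\tsl+\n\Deltataus/2)$. That window then lies inside a single fitted block $\D_{k_{0}}$, which therefore contains both the sub-triangle $G_{1}=\{(\ii,\jj):\tsl-\n\Deltataus/2\le\ii\le\jj\le\tsl-1\}\subseteq\Dsl$ (mean $\muls$) and the disjoint rectangle $G_{2}=\{(\ii,\jj):\tsl-\n\Deltataus/2\le\ii\le\tsl-1<\tsl\le\jj\le\tsl+\n\Deltataus/2-1\}\subseteq\Ezs$ (mean $\muzs$, as its two indices sit in consecutive true blocks), both of cardinality of order $(\n\Deltataus)^{2}$. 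The elementary bound then gives $\KDn(\bt)\gtrsim(\Deltataus)^{2}\lambdainfz^{2}$, a fortiori $\ge\lambdainfz^{2}(\Deltataus)^{4}/64$, uniformly over $\AnKunn$.

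For $\K>\Ks$, the over-abundance of fitted blocks is exploited by a pigeonhole on midpoints: the midpoint $\lfloor(\ttt_{\kk-1}+\tk)/2\rfloor$ of $\Dk$ lies in its index range, hence in a unique true block, and since $\K>\Ks$ two fitted blocks $\D_{\kk},\D_{\kk'}$ with $\kk<\kk'$ have midpoints in the same true block $\Ds_{m}$; then the segment joining these midpoints lies inside the range of $\Ds_{m}$, so the consecutive blocks $\D_{\kk}$ and $\D_{\kk+1}$ each contribute to $\Ds_{m}$ an interval of length $\ge\n\Deltan/2$ (fitted blocks have length $\ge\n\Deltan$ on $\AnKDelta$). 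Every pair $(\ii,\jj)$ with $\ii$ in $\D_{\kk}$'s piece of $\Ds_{m}$ and $\jj$ in $\D_{\kk+1}$'s piece satisfies $\ii<\jj$, lies in $\Ds_{m}$ (mean $\muu_{m}^{\vrai}$) and in two distinct fitted blocks, hence in $\Ez\setminus\Ezs$; there are at least $(\n\Deltan)^{2}/4$ of them, so $\Kzn(\bt)\ge\lambdainfz^{2}\Deltan^{2}/4$ on $\AnKDelta$. Finally, for $\K=\Ks$ with $\|\bt-\bts\|_{\infty}>\n\delta$, one may assume $\delta\le\Deltataus/2$ (this only enlarges the feasible set) and, using the reflection $(\ii,\jj)\mapsto(\n+1-\jj,\n+1-\ii)$, which leaves the model, $\Ezs$, $\Gun$, $\AnKunn$ and $\Kn$ invariant up to relabelling while swapping ``$\tk$ too small'' with ``$\tk$ too large'', one may assume some $\tk$ ($1\le\kk\le\Ks-1$) satisfies $\tk>\tsk+\n\delta$ with $\kk$ minimal, so that $|\ttt_{\kk-1}-\ts_{\kk-1}|\le\n\delta$. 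Then $\D_{\kk}$ contains a sub-interval of $\Dsk$ of length $\ge\nsk-\n\delta\ge\n\Deltataus/2$ (mean $\muks$) together with the interval $[\tsk,\tk-1]$ of length $>\n\delta$, all of whose indices lie in true blocks with index $>\kk$; the sub-triangle of $\D_{\kk}$ over the first interval (size $\gtrsim(\n\Deltataus)^{2}$, mean $\muks$) and the rectangle of $\D_{\kk}$ between the two intervals (size $\gtrsim\n\Deltataus\cdot\n\delta$, mean $\muzs$) are disjoint, and the elementary bound yields $\KDn(\bt)\gtrsim\lambdainfz^{2}\Deltataus\min(\delta,\Deltataus)$, which after bookkeeping of the constants is the announced $\frac{\lambdainfz^{2}}{32}\min(\Deltataus/2,\delta)(\Deltataus)^{3}$.

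The main point throughout is the geometric localisation: exhibiting, inside a single fitted block or across a single fitted boundary, a sub-triangle carrying one block mean $\muks$ and a disjoint sub-rectangle carrying the baseline $\muzs$, with both cardinalities bounded below purely by $\Deltataus$ (resp.\ $\Deltan$, $\delta$). The delicate feature is that straddles and splits can be arbitrarily unbalanced; this is exactly why a length lower bound $\n\Deltan$ on the fitted blocks is indispensable when $\K>\Ks$ (so that a macroscopic rectangle is forced), whereas $\Deltan=1/\n$ already suffices when $\K<\Ks$, and it is also why the exponents of $\Deltataus$ in (i) and (iii) are somewhat loose rather than the optimal $2$.
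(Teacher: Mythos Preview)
Your argument is correct and follows the same geometric strategy as the paper: in each regime you exhibit, inside a single fitted block or across a single fitted boundary, a sub-triangle carrying some $\muks$ and a disjoint sub-rectangle carrying $\muzs$, and bound their cardinalities from below. The differences in packaging are worth noting. The paper first expands $\KDn$ into the exact identity $\KDn(\bt)=\frac{1}{n(n+1)}\sum_{\kk}\frac{1}{\nk}\sum_{\el,\lp}\nkl\nklp(\muls-\mulps)^{2}$ and keeps a single term, incurring the denominator $\nk\le n(n+1)/2$, whence the exponent $4$ in $(\Deltataus)^{4}$ in (i) and the exponent $3$ in (iii). Your ``elementary estimate'' via the subset variance bound $\frac{|G_{1}||G_{2}|}{|G_{1}|+|G_{2}|}(m_{1}-m_{2})^{2}$ replaces $\nk$ by $|G_{1}|+|G_{2}|$ and is therefore genuinely sharper, giving $(\Deltataus)^{2}$ in (i) and $\Deltataus\min(\delta,\Deltataus)$ in (iii), which a fortiori dominate the stated bounds; you correctly flag this slack at the end. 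For (ii), the paper pigeonholes on the fitted \emph{boundaries} (some interior $t_{\ell}$ stays $> n\Deltan/2$ away from every $\tsk$), while you pigeonhole on the fitted \emph{midpoints}; both land on the same rectangle in $\Ez\setminus\Ezs$. For (iii), the paper splits into the two sub-cases $\|\bt-\bts\|_{\infty}\lessgtr n\Deltataus/2$, whereas you reduce to $\delta\le\Deltataus/2$ and use the reflection $(\ii,\jj)\mapsto(n+1-\jj,n+1-\ii)$ to force an overshoot; this is a clean alternative. One small correction: minimality of $\kk$ with $\tk>\tsk+n\delta$ only gives the one-sided inequality $\ttt_{\kk-1}\le\ts_{\kk-1}+n\delta$, not $|\ttt_{\kk-1}-\ts_{\kk-1}|\le n\delta$ as you write, but only the upper bound on $\ttt_{\kk-1}$ is used (and if $\ttt_{\kk-1}<\ts_{\kk-1}$ the overlap with $\Dsk$ is even larger), so the argument is unaffected.
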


\noindent Thus,
\begin{eqnarray*}
\Prob\left(\underset{\bt\in\AnKDelta}{\min}\Jn(\bt)\leq0\right)
\leq\Prob\left(\underset{\bt\in\AnKDelta}{\min}\left[\Kn(\bt)+\Vn(\bt)+\Wn(\bt)+\Zn(\bt)\right]\leq0\right).
\end{eqnarray*}
The right hand side (rhs) of the previous inequality is bounded by
$$
\Prob\left(-\underset{\bt\in\AnKDelta}{\min}\Vn(\bt)-\underset{\bt\in\AnKDelta}{\min}\Wn(\bt)- \underset{\bt\in\AnKDelta}{\min}\Zn(\bt)
\geq\underset{\bt\in\AnKDelta}{\min}\Kn(\bt)\right).
$$
For bounding this term we shall use Lemma \ref{Lemme:MinorationKn} $(ii)$.
For $\K>\Ks$, we obtain
\begin{eqnarray}\label{eq:equation:fin:theo}
&&\Prob\left(\underset{\bt\in\AnKDelta}{\min}\Jn(\bt)\leq0\right)
    \leq\Prob\left(-\underset{\bt\in\AnKDelta}{\min}\Vn(\bt)\geq
    \frac{\lambdainfz^2}{12}\Deltan^2\right)\nonumber\\
    &+&\Prob\left(-\underset{\bt\in\AnKDelta}{\min}\Wn(\bt)\geq
     \frac{\lambdainfz^2}{12}\Deltan^2\right)
    +\Prob\left(- \underset{\bt\in\AnKDelta}{\min}\Zn(\bt)\geq
     \frac{\lambdainfz^2}{12}\Deltan^2\right).
\end{eqnarray}
By Lemmas \ref{lem:maj:Vn}, \ref{lem:maj:Wn} and \ref{Lemme:InegaliteConcentrationZn}, we conclude that
\begin{equation*}
\P\left(\Khat=\K\right)\underset{\n\rightarrow+\infty}{\longrightarrow} 0,
\end{equation*}
for $\K>\Ks$. The case $\K<\Ks$ can be proved by following the same lines.

\end{proof}

\corr{
\begin{remark}
We can observe from Theorem \ref{theo:theo} that adding a penalty term is not necessary for obtaining a consistent estimator of the number of diagonal blocks. This may be surprising since, in the one-dimensional case, it is proved in Theorem 9 of \cite{lavielle2000least} that
a penalty term is required. More precisely, the main difference between our two-dimensional framework and the one-dimensional case is the 
behavior of the deterministic part of our criterion $\Kn$:  it is lower bounded whatever the value
of $\K$ ($\K\geq\Ks$ or $\K<\Ks$), as proved in Lemma \ref{Lemme:MinorationKn}. On the contrary, in the one-dimensional case, a penalty term
of the type $\beta_n K$ is necessary to obtain such a lower bound when $\K\geq\Ks$. In the case where $\K<\Ks$, a lower bound for $\Kn$ is obtained
without penalization. For further details, see the proof of Theorem 9
in \cite{lavielle2000least}.
\end{remark}
}

\begin{theo}\label{theo: consistenceInstantRupt}
Assume that the assumptions of Theorem~\ref{theo:theo} hold then, for all $\deltaa>0$,
\begin{equation}\label{eq:consistancerupture}
\P\left(\left\|\bts-\bthatKhat\right\|_{\mathcal{H}}>\n\deltaa\right)\underset{\n\rightarrow+\infty}{\longrightarrow} 0,
\end{equation}
where $\bthatKhat$ is defined in~\eqref{def:Qn} and~(\ref{eq:def:Khat}) and $\|\cdot\|_{\mathcal{H}}$ denotes the Hausdorff distance defined by
\[\left\|\bts-\bthatK\right\|_{\mathcal{H}}=\max\left[\max_{0\leq\kk\leq\Ks}\min_{0\leq\el\leq\K}\left|\tsk-\thatl\right|, \max_{0\leq\el\leq\K}\min_{0\leq\kk\leq\Ks}\left|\tsk-\thatl\right|\right].\]
\end{theo}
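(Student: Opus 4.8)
The plan is to deduce Theorem~\ref{theo: consistenceInstantRupt} from Theorem~\ref{theo:theo} together with part $(iii)$ of Lemma~\ref{Lemme:MinorationKn}, following essentially the same argument as for the case $\K>\Ks$ in the proof of Theorem~\ref{theo:theo}, but now localizing the minimization of $\Jn$ to segmentations that are far from $\bts$ in the sup-norm. First I would reduce the Hausdorff-distance statement to a sup-norm statement conditionally on the event $\{\Khat=\Ks\}$: since $\Prob(\Khat\neq\Ks)\to 0$ by Theorem~\ref{theo:theo}, it suffices to show that $\Prob(\{\|\bts-\bthatKhat\|_{\mathcal H}>\n\deltaa\}\cap\{\Khat=\Ks\})\to 0$, and on $\{\Khat=\Ks\}$ the vectors $\bthatKhat$ and $\bts$ have the same number of components, so the Hausdorff distance reduces (up to a constant factor) to $\|\bts-\bthat_{\Ks}\|_\infty$ as defined in~\eqref{eq:norm_inf}. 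Hence the goal becomes $\Prob(\|\bts-\bthat_{\Ks}\|_\infty>\n\deltaa)\to 0$.

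Next, for fixed $\deltaa>0$, I would write, using the definition of $\bthat_{\Ks}$ as the minimizer of $\Qn^{\Ks}$ over $\AnKsDelta$ and the fact that $\bts\in\AnKsDelta$ for large $\n$,
\begin{equation*}
\Prob\left(\left\|\bts-\bthat_{\Ks}\right\|_\infty>\n\deltaa\right)\leq\Prob\left(\min_{\left\{\bt\in\AnKsDelta,\,\|\bt-\bts\|_\infty>\n\deltaa\right\}}\Jn(\bt)\leq 0\right),
\end{equation*}
where $\Jn$ is the normalized criterion difference from~\eqref{eq:def_Jn}--\eqref{eq:dec:Jn}. Using the decomposition $\Jn=\Kn+\Vn+\Wn+\Zn$ and the lower bound of Lemma~\ref{Lemme:MinorationKn}$(iii)$ on $\Kn$ over the set $\{\bt\in\AnK^{1/\n}:\|\bt-\bts\|_\infty>\n\deltaa\}$ (which contains the set above since $\Deltan\geq 1/\n$), the right-hand side is bounded, exactly as in~\eqref{eq:equation:fin:theo}, by a sum of three probabilities of the form $\Prob(-\min\Vn\geq c_\deltaa)$, $\Prob(-\min\Wn\geq c_\deltaa)$, $\Prob(-\min\Zn\geq c_\deltaa)$, where $c_\deltaa=\tfrac{1}{3}\cdot\tfrac{\lambdainfz^2}{32}\min(\Deltataus/2,\deltaa)(\Deltataus)^3$ is a strictly positive constant not depending on $\n$. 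Each of these tends to zero by Lemmas~\ref{lem:maj:Vn}, \ref{lem:maj:Wn} and~\ref{Lemme:InegaliteConcentrationZn}, since the deviation bounds there go to zero for any fixed positive threshold (indeed they are designed to handle thresholds as small as $\Deltan^2\gg (\log\n)^{1/2}/\n$, and a constant threshold is easier).

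The one point requiring a little care — and the main obstacle — is the passage from the Hausdorff distance on $\bthatKhat$ (a vector whose length is the random quantity $\Khat+1$) to the fixed-length sup-norm control of $\bthat_{\Ks}$. On the complement of $\{\Khat=\Ks\}$ the Hausdorff distance is not directly comparable to any single $\|\bthat_\K-\bts\|_\infty$, which is precisely why one first conditions on $\{\Khat=\Ks\}$; the contribution of $\{\Khat\neq\Ks\}$ is absorbed by Theorem~\ref{theo:theo}. One must also check that on $\{\Khat=\Ks\}$ one genuinely has $\|\bts-\bthatKhat\|_{\mathcal H}\leq\|\bts-\bthat_{\Ks}\|_\infty$ (the two one-sided maxima in the Hausdorff distance are each dominated by the coordinatewise maximum when the index sets have equal cardinality and endpoints $\ttt_0=\ts_0=1$, $\ttt_{\Ks}=\ts_{\Ks}=\n+1$ coincide), so that $\{\|\bts-\bthatKhat\|_{\mathcal H}>\n\deltaa\}\cap\{\Khat=\Ks\}\subseteq\{\|\bts-\bthat_{\Ks}\|_\infty>\n\deltaa\}$. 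Granting this inclusion, the bound
\begin{equation*}
\Prob\left(\left\|\bts-\bthatKhat\right\|_{\mathcal H}>\n\deltaa\right)\leq\Prob\left(\Khat\neq\Ks\right)+\Prob\left(\left\|\bts-\bthat_{\Ks}\right\|_\infty>\n\deltaa\right)
\end{equation*}
holds, and both terms on the right tend to zero, which completes the proof.
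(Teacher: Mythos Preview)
Your proposal is correct and follows essentially the same route as the paper's proof: split on $\{\Khat=\Ks\}$, absorb $\{\Khat\neq\Ks\}$ via Theorem~\ref{theo:theo}, reduce the Hausdorff distance to $\|\bthat_{\Ks}-\bts\|_\infty$, and then bound $\Prob(\min\Jn\leq 0)$ over the bad set using the decomposition~\eqref{eq:dec:Jn}, Lemma~\ref{Lemme:MinorationKn}$(iii)$, and Lemmas~\ref{lem:maj:Vn}--\ref{Lemme:InegaliteConcentrationZn}. If anything, you are slightly more careful than the paper on the Hausdorff-to-sup-norm step: the paper asserts equality $\|\bthatKhat-\bts\|_{\mathcal H}=\|\bthatKhat-\bts\|_\infty$ when $\Khat=\Ks$, whereas only the inequality $\|\cdot\|_{\mathcal H}\leq\|\cdot\|_\infty$ holds in general (and is exactly what the argument needs), which you justify correctly.
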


\textcolor{black}{Observe that \eqref{eq:consistancerupture} can be rewritten as
$\P\left(\left\|\btaus-\widehat{\boldsymbol{\tau}}_{\Khat}\right\|_{\mathcal{H}}>\deltaa\right)
\underset{\n\rightarrow+\infty}{\longrightarrow} 0$, where $\widehat{\boldsymbol{\tau}}_{\Khat}=\bthatKhat/n$.}


\begin{proof}[Sketch of proof of Theorem \ref{theo: consistenceInstantRupt}]
Observe that
\begin{multline*}
\P\left(\left\|\bts-\bthatKhat\right\|_{\mathcal{H}}>\n\deltaa\right)
=\P\left(\left\{\left\|\bts-\bthatKhat\right\|_{\mathcal{H}}>\n\deltaa\right\}\cap
\left\{\Khat\neq\Ks\right\}\right)\\
+\P\left(\left\{\left\|\bts-\bthatKhat\right\|_{\mathcal{H}}>\n\deltaa\right\}\cap
\left\{\Khat=\Ks\right\}\right)\leq\P\left(\Khat\neq\Ks\right) +\P\left(\|\bthat_{\Ks}-\bts\|_\infty >\n\deltaa\right)
\end{multline*}
where $\|\bthat_{\Ks}-\bts\|_\infty$ is defined in (\ref{eq:norm_inf}) since $\|\bthatKhat-\bts\|_\infty=\|\bthatKhat-\bts\|_{\mathcal{H}}$
when $\Khat=\Ks$.
By Theorem~\ref{theo:theo}, proving (\ref{eq:consistancerupture}) amounts to proving that
$$
\P\left(\max_{0\leq\kk\leq\Ks}\left|\tsk-\thatk\right|>\n\deltaa\right)\to 0,\textrm{ as }
n\to+\infty .
$$
Observe that
\begin{equation*}
\P\left(\max_{1\leq\kk\leq\Ks}\left|\tsk-\thatk\right|>\n\deltaa\right)\leq\Prob\left(\underset{\ensemblelemmestar}{\min}\Jn(\bt)\leq0\right).
\end{equation*}
Using the same arguments as those used in the proof of Theorem \ref{theo:theo}, the proof follows from
the decomposition of $\Jn$ given by (\ref{eq:dec:Jn}), the lower bound (\ref{Eq:MinorationKn-vrai}) of
Lemma \ref{Lemme:MinorationKn} and the deviation inequalities for the random terms given by Lemmas \ref{lem:maj:Vn}, \ref{lem:maj:Wn} and
\ref{Lemme:InegaliteConcentrationZn}.
\end{proof}



\section{Numerical experiments}\label{Section: NumExp}


\corr{The goal of this section is to illustrate the theoretical results obtained in Section \ref{Section: TheoResults}.
For an application of our method to real data, we refer the reader to \cite{levy2014two}.}

\subsection{\corr{Simulation framework}}

We generated Gaussian diagonal block matrices according to Model~(\ref{eq:model}) with $\muks=1$ for the $\Ks=5$ diagonal blocks and $\muzs=0$ for different values of $\n$ $\left(\n\in\{500,1500\}\right)$. The change-point locations are $\left(\taus_0,\ldots,\taus_5\right)=(0,0.07,0.2,0.4,0.67,1)$ hence $\Deltataus=0.07$. We shall use different values for the standard deviation $\sigma$ of the $\epsij$: $\sigma\in\{1,...,10\}$. For each case, $500$ matrices were simulated and the procedure was tested. Examples of such matrices are displayed in Figure~\ref{fig:Matrices} for different values of $\sigma$.

\corr{The results that are presented below have been obtained by using the R package \textsf{HiCseg} which is available on the CRAN.
In this package, the values of $\Delta_n$ and $c$ are fixed and equal to $2/n$ and $3/4$, respectively. }


\begin{figure}[!h]
\begin{tabular}{cc}
$\sigma=1$&$\sigma=4$\\
\begin{minipage}[c]{0.45\textwidth}
\includegraphics[width=\linewidth]{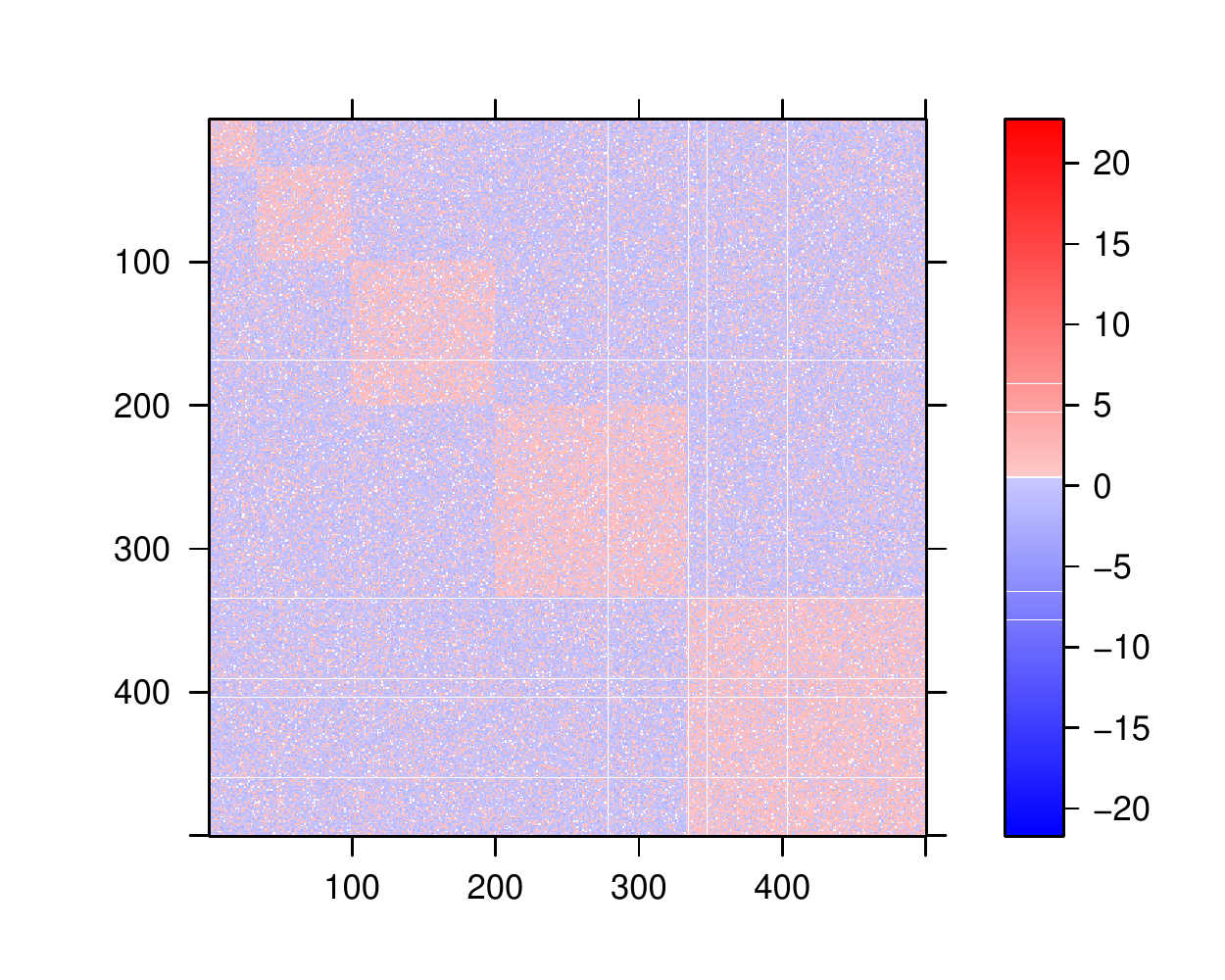}
\end{minipage}&
\begin{minipage}[c]{0.45\textwidth}
\includegraphics[width=\linewidth]{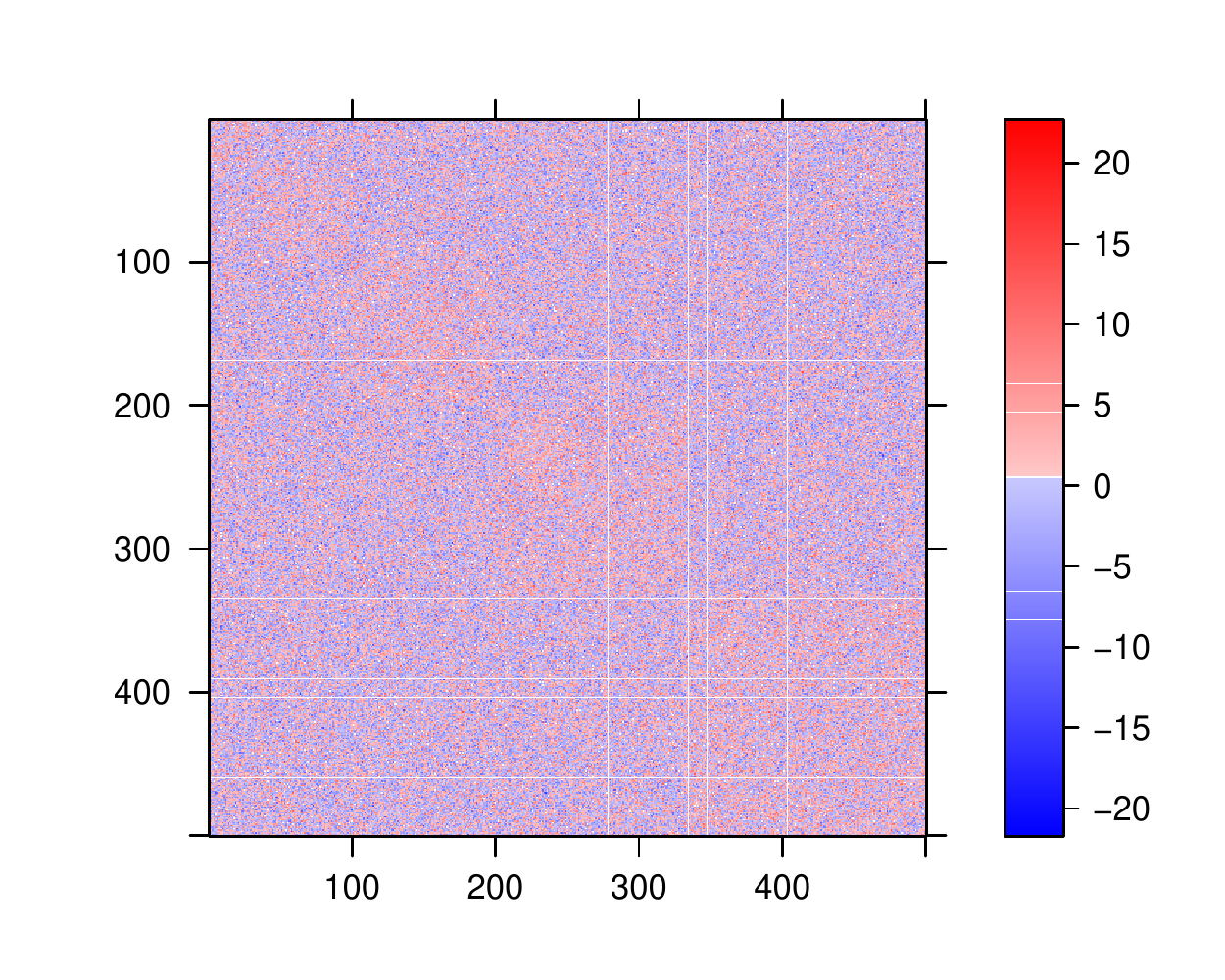}
\end{minipage}\\
\end{tabular}
   \caption{\label{fig:Matrices}Examples of simulated matrices following Model~(\ref{eq:model}) with $\left(\taus_0,\ldots,\taus_5\right)=(0,0.07,0.2,0.4,0.67,1)$ and $\n=500$ for two values of $\sigma$: $\sigma=1$ (left) and $\sigma=4$ (right).}
\end{figure}

\subsection{\corr{Statistical performance}}

\subsubsection{\corr{Performance of the statistical procedure}}

 \corr{We first consider the problem of estimating the true number of blocks
 $\Ks$, and provide some insight about the consistency of our
 procedure without penalty, 
outlined in Remark \ref{rem1}. The median, 1st and 3rd quantiles of
the estimated number of change points are displayed in Figure
\ref{fig:ResExperience} for $n$ in $\{500, 1500\}$ and for different
values of $\sigma$.}


\corr{On the one hand, we observe that for high signal to noise ratios, the true value of $\Ks$ is retrieved by our procedure.
On the other hand, when the signal to noise ratio becomes very low, $\Ks$ is not properly estimated.
In this situation, $\Ks$ is overestimated, which is in accordance with what occurs in the one-dimensional case where a non-penalized
procedure would result in a systematic overestimation of $\Ks$. However, when $n$ increases, the value of $\sigma$ from which this overestimation 
occurs is unsurprisingly larger.}\ \\

\begin{figure}[!h]
\begin{tabular}{ccc}
&$\n=500$&$\n=1500$\\
$\Khat$&\begin{minipage}[c]{0.45\textwidth}
\includegraphics[width=\linewidth]{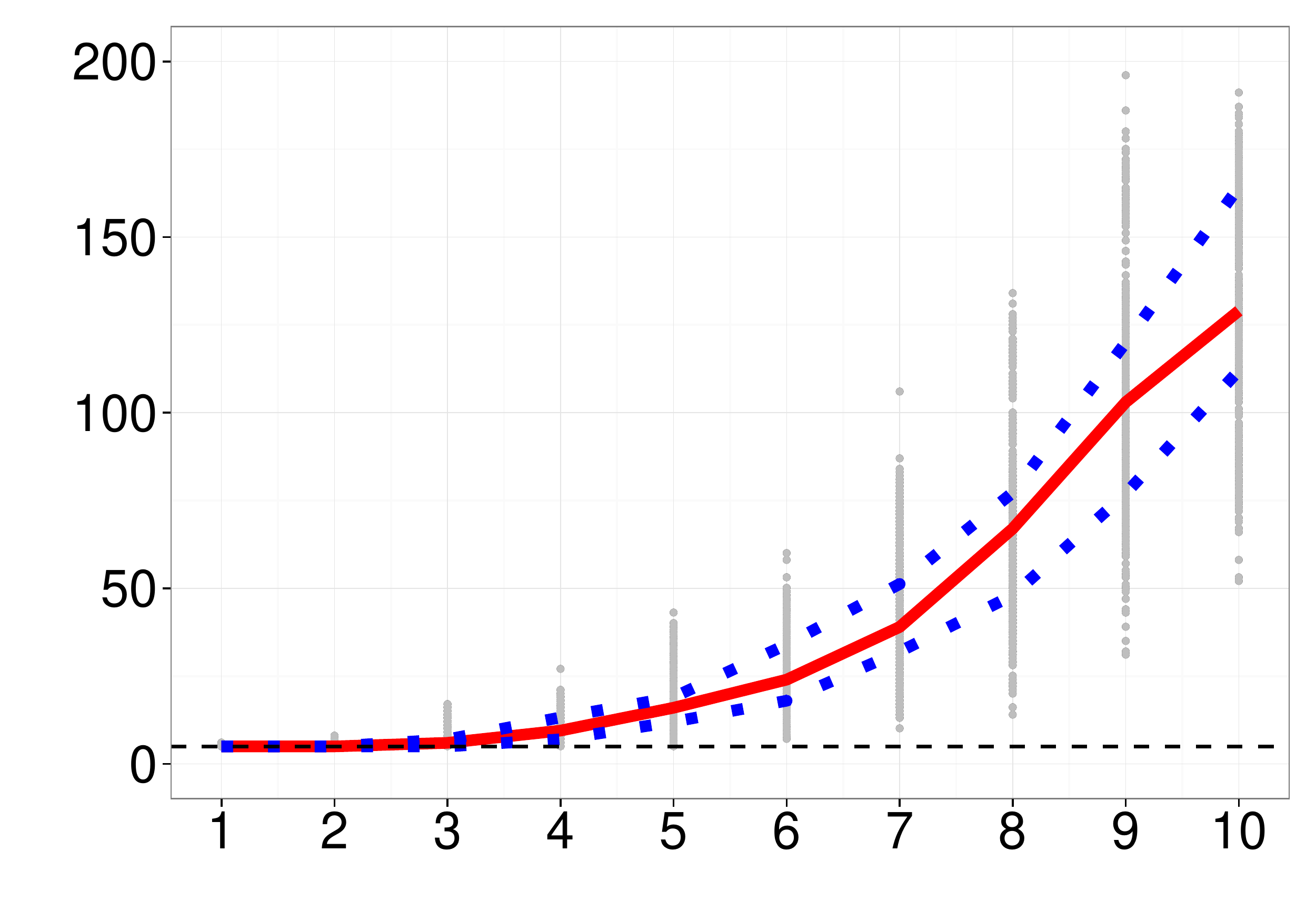}
\end{minipage}&
\begin{minipage}[c]{0.45\textwidth}
\includegraphics[width=\linewidth]{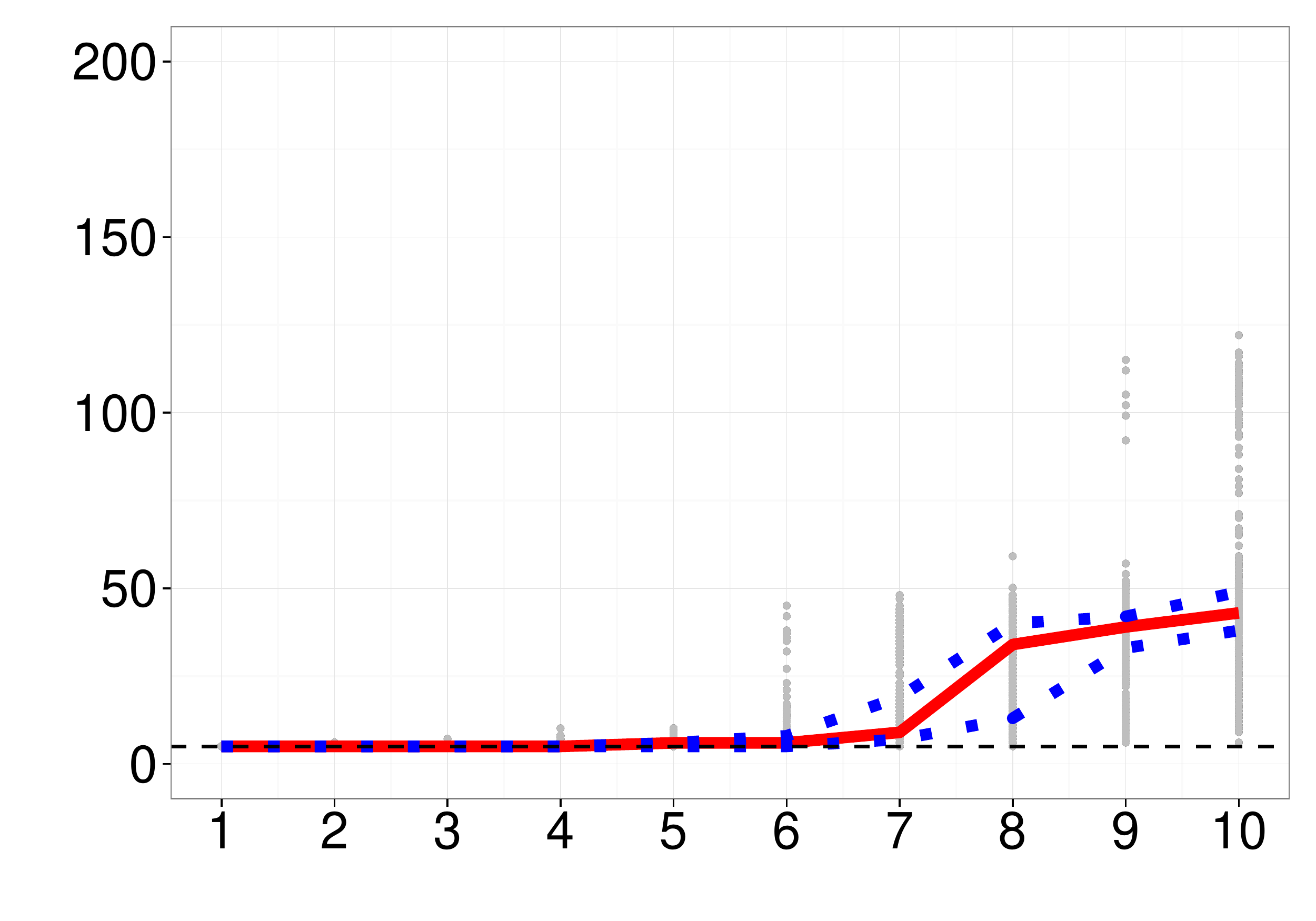}
\end{minipage}\\
&$\sigma$&$\sigma$\\
$\Khat$&\begin{minipage}[c]{0.45\textwidth}
\includegraphics[width=\linewidth]{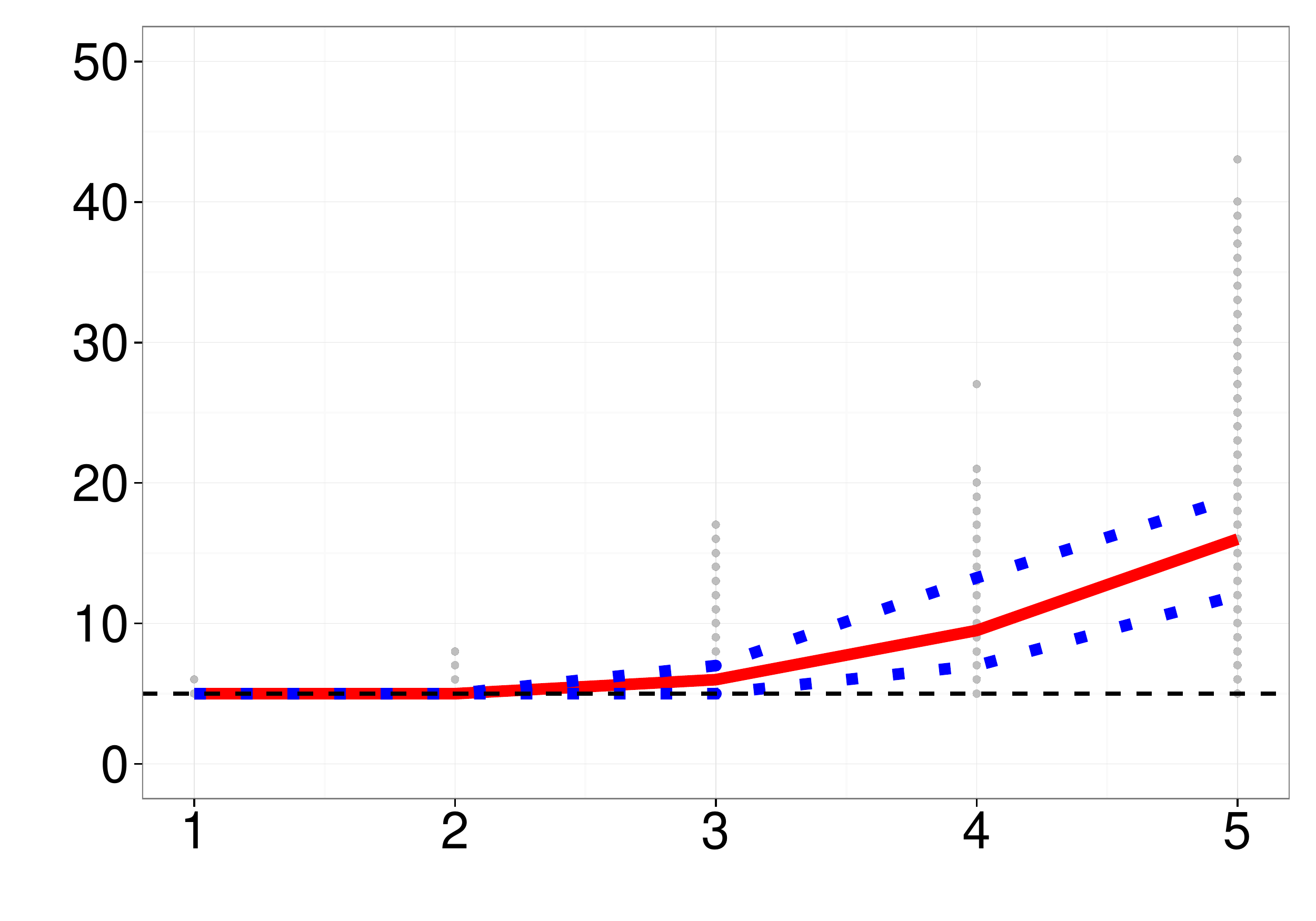}
\end{minipage}&
\begin{minipage}[c]{0.45\textwidth}
\includegraphics[width=\linewidth]{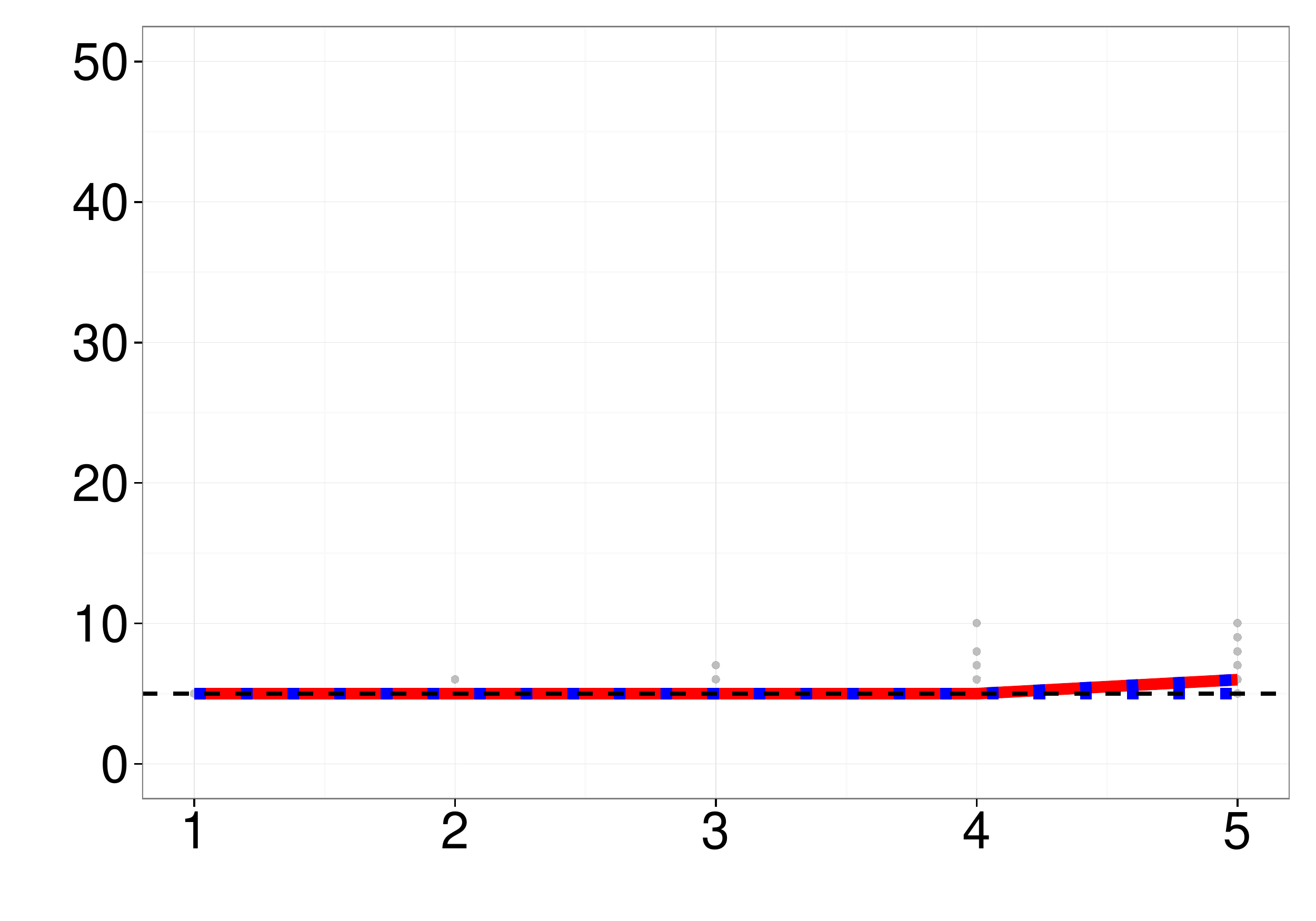}
\end{minipage}\\
&$\sigma$&$\sigma$\\
\end{tabular}
    \caption{\label{fig:ResExperience} \corr{Top: Median (plain), 1st and
      3rd quartiles (dotted line) of the estimations of $\Ks=5$ as a
      function of the standard deviation $\sigma$ for $\n=500$ (left)
      and $\n=1500$ (right). The values of $\Khat$ at each simulation
      are displayed with light grey dots. The dashed
        line corresponds to the true value of $\Ks$. Bottom: Same plots with the $x$-axis values restricted to $\{1,\dots,5\}$.}}
\end{figure}


\corr{To illustrate the performance of our procedure in terms of the
  estimation of change-point location, Figure~\ref{fig:Haus} displays the boxplots of the two parts
of the Hausdorff distance defined by:
\begin{eqnarray}
\label{eq:hausdorff}\left\|\bts-\bthatKhat\right\|_{\mathcal{H}^1}&=&\max_{0\leq\kk\leq\Ks}\min_{0\leq\el\leq\Khat}\left|\tsk-\thatl\right|,\\
\nonumber\left\|\bts-\bthatKhat\right\|_{\mathcal{H}^2}&=&\max_{0\leq\el\leq\Khat}\min_{0\leq\kk\leq\Ks}\left|\tsk-\thatl\right|.
\end{eqnarray}
We observe from this figure that when $\Ks$ is overestimated, the true change-points are recovered ($\|\cdot\|_{\mathcal{H}^1}$ is close to $0$), 
the other estimated change-points being spurious ones ($\|\cdot\|_{\mathcal{H}^2}$ is large). As proved in Theorem~\ref{theo: consistenceInstantRupt},
this phenomenon is less \corr{visible} when $n$ becomes large.}


\begin{figure}[!h]
\begin{tabular}{ccc}
&$\n=500$&$\n=1500$\\
\rotatebox{90}{\hspace{1.5em}\small $\mathcal{H}^1$}&
\begin{minipage}[c]{0.45\textwidth}
\includegraphics[width=\linewidth]{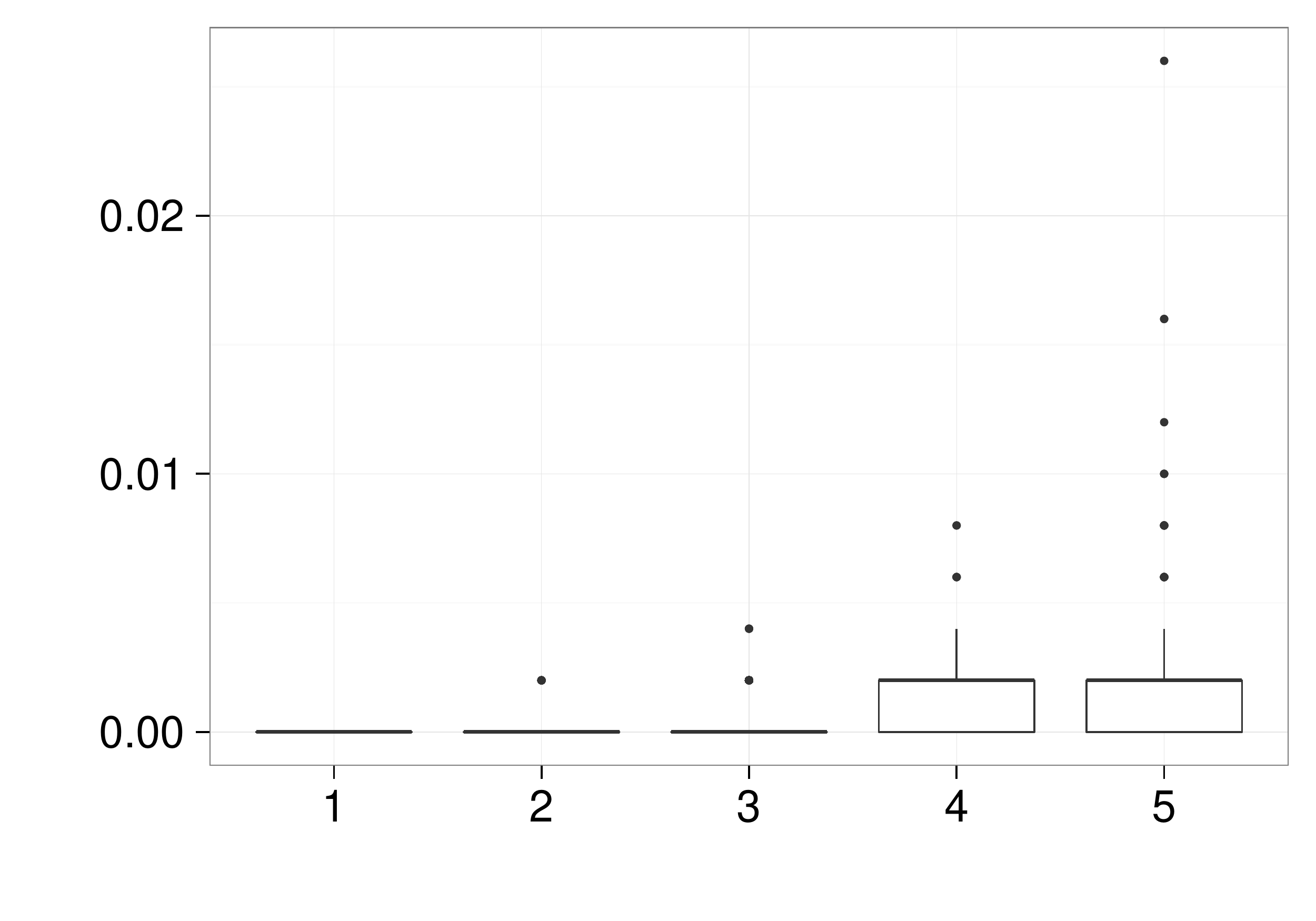}
\end{minipage}&
\begin{minipage}[c]{0.45\textwidth}
\includegraphics[width=\linewidth]{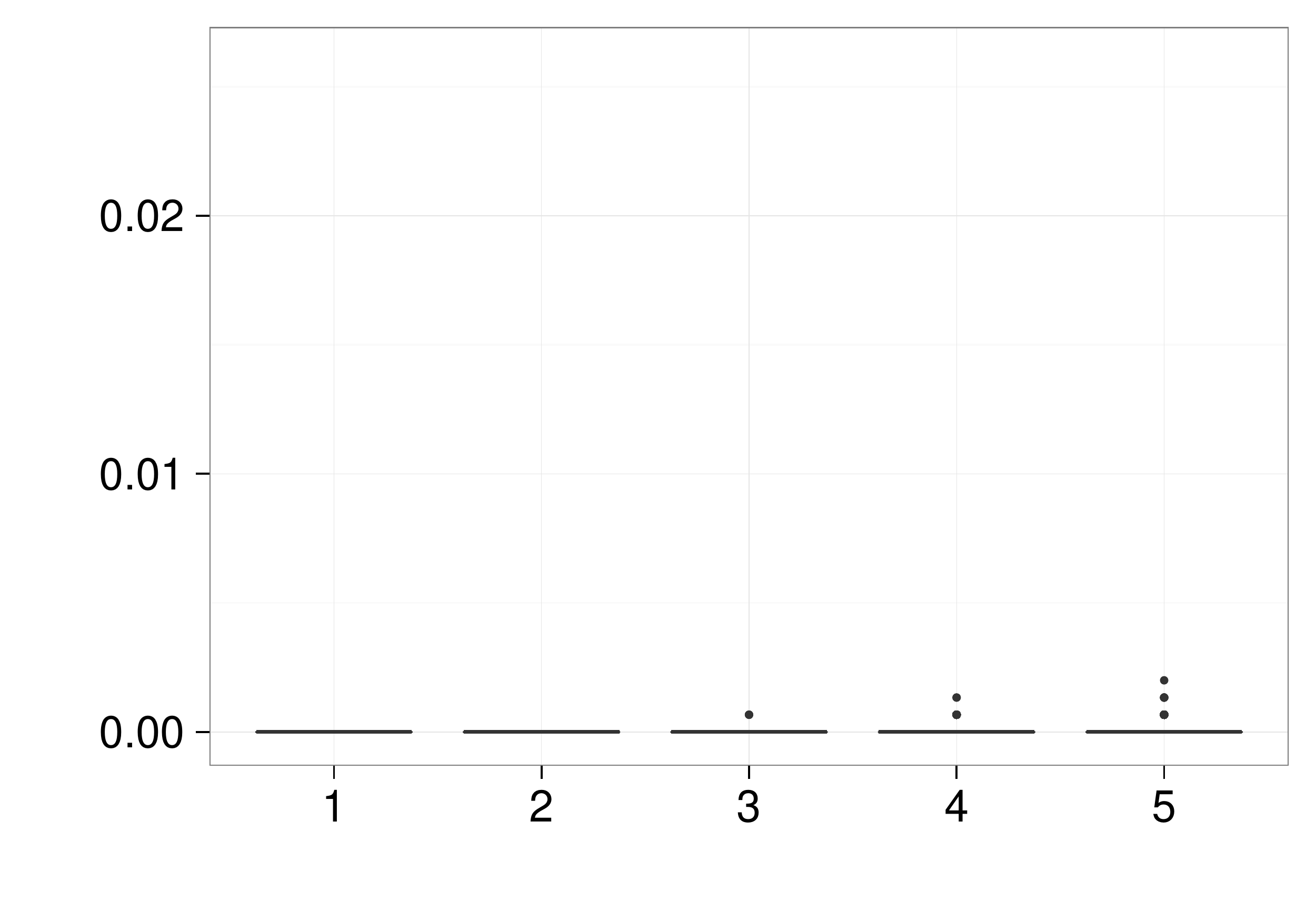}
\end{minipage}\\
\rotatebox{90}{\hspace{1.5em}\small $\mathcal{H}^2$}&
\begin{minipage}[c]{0.45\textwidth}
\includegraphics[width=\linewidth]{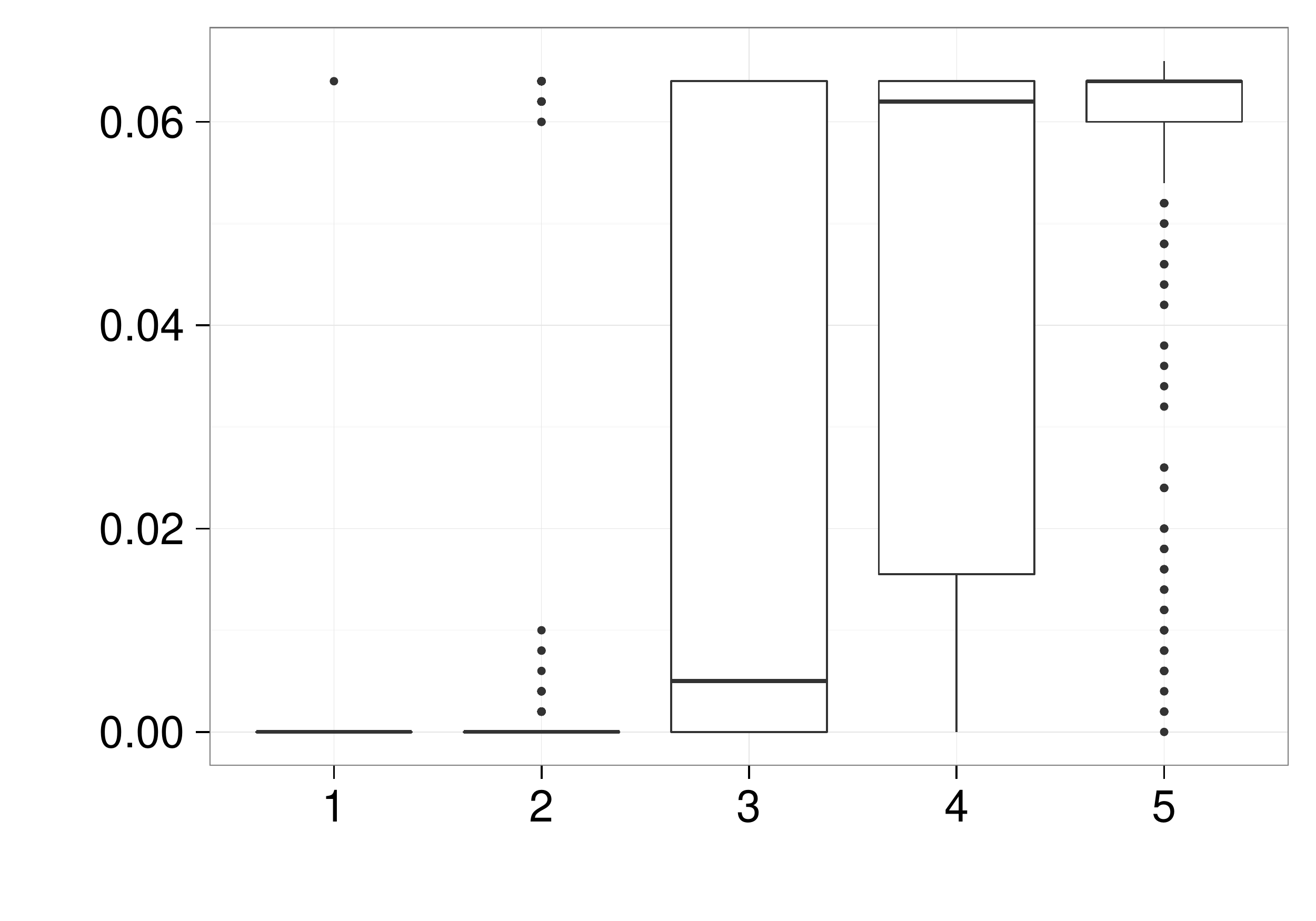}
\end{minipage}&
\begin{minipage}[c]{0.45\textwidth}
\includegraphics[width=\linewidth]{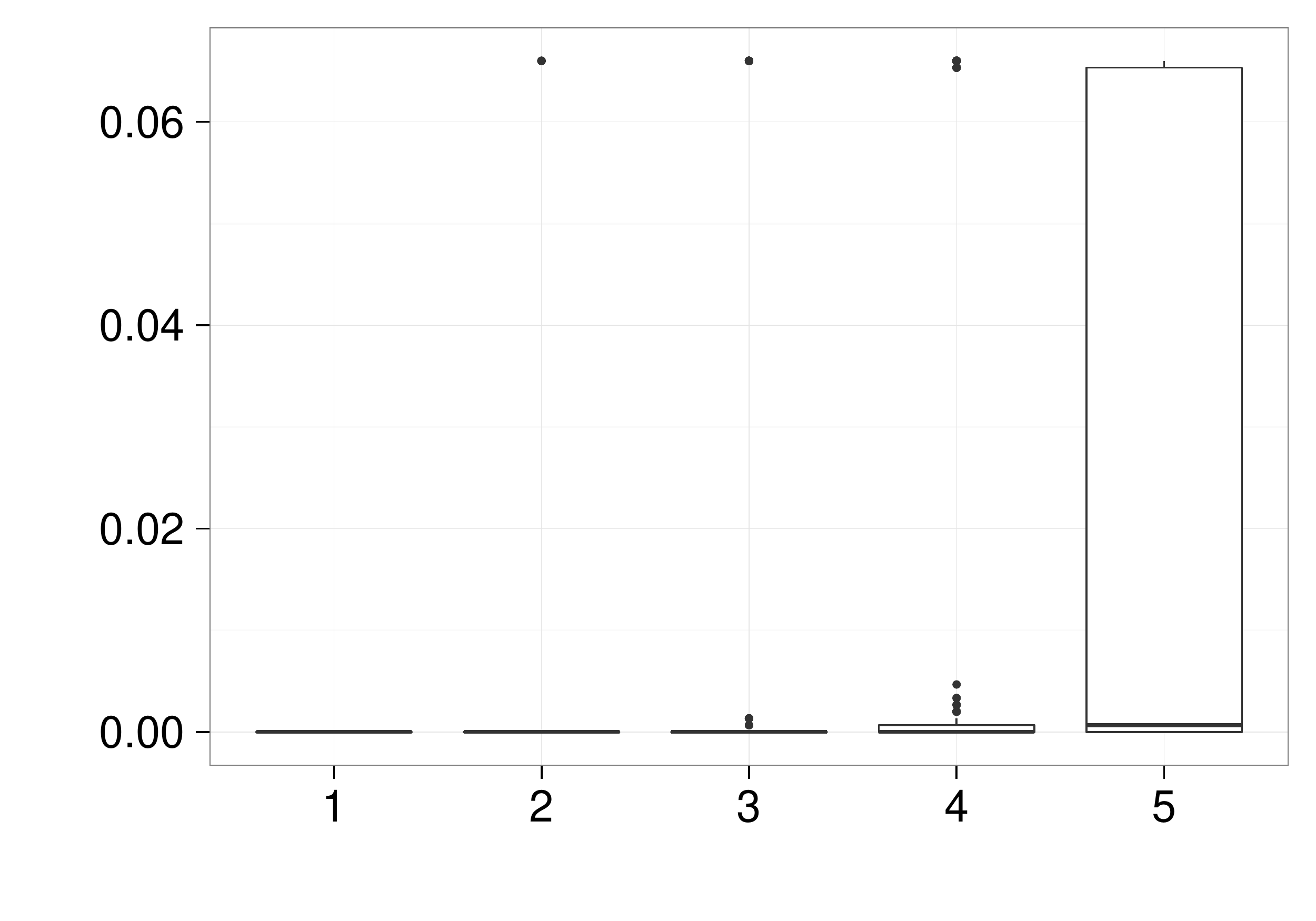}
\end{minipage}\\
\end{tabular}
    \caption{\label{fig:Haus} \corr{Boxplots of the two parts of the
        Hausdorff distance: $\mathcal{H}^1$ (top) and $\mathcal{H}^2$
        (bottom) for $\n=500$ (left) and $\n=1500$ (right). For each
        case, the boxplots are displayed as a function of $\sigma$.}}
\end{figure}

\subsubsection{\corr{Effect of a poor estimation of $\mu_0^\star$}}

\corr{We study the behavior of our segmentation procedure when $\mu_0^\star$ is poorly estimated which may occur, for instance, when 
the constant $c$ appearing in (\ref{eq:G01}) is too small. 
To this end, we generated data in which 
the mean of the $n_0\times n_0$ top right part of the observation matrix is modified, where $n_0$ is defined in (\ref{Hypothese:TailleMaxBlocks}).
More precisely, the mean of this part is equal to $\mu_0^\star+\omega$, where $\omega\in\{0.2,0.4,0.6,0.8\}$. The results are displayed in
Figure \ref{fig:mu0}. We can see from this figure that when the value of $\mu_0^\star+\omega$ is close to the values of
the means of the diagonal blocks our procedure tends to overestimate $\Ks$. This phenomenon is less visible when $n$ is large.}

\begin{figure}[!h]
\begin{tabular}{ccc}
&$\n=500$&$\n=1500$\\
\rotatebox{90}{\hspace{1.5em}\small $\sigma=1$}&
\begin{minipage}[c]{0.45\textwidth}
\includegraphics[width=\linewidth]{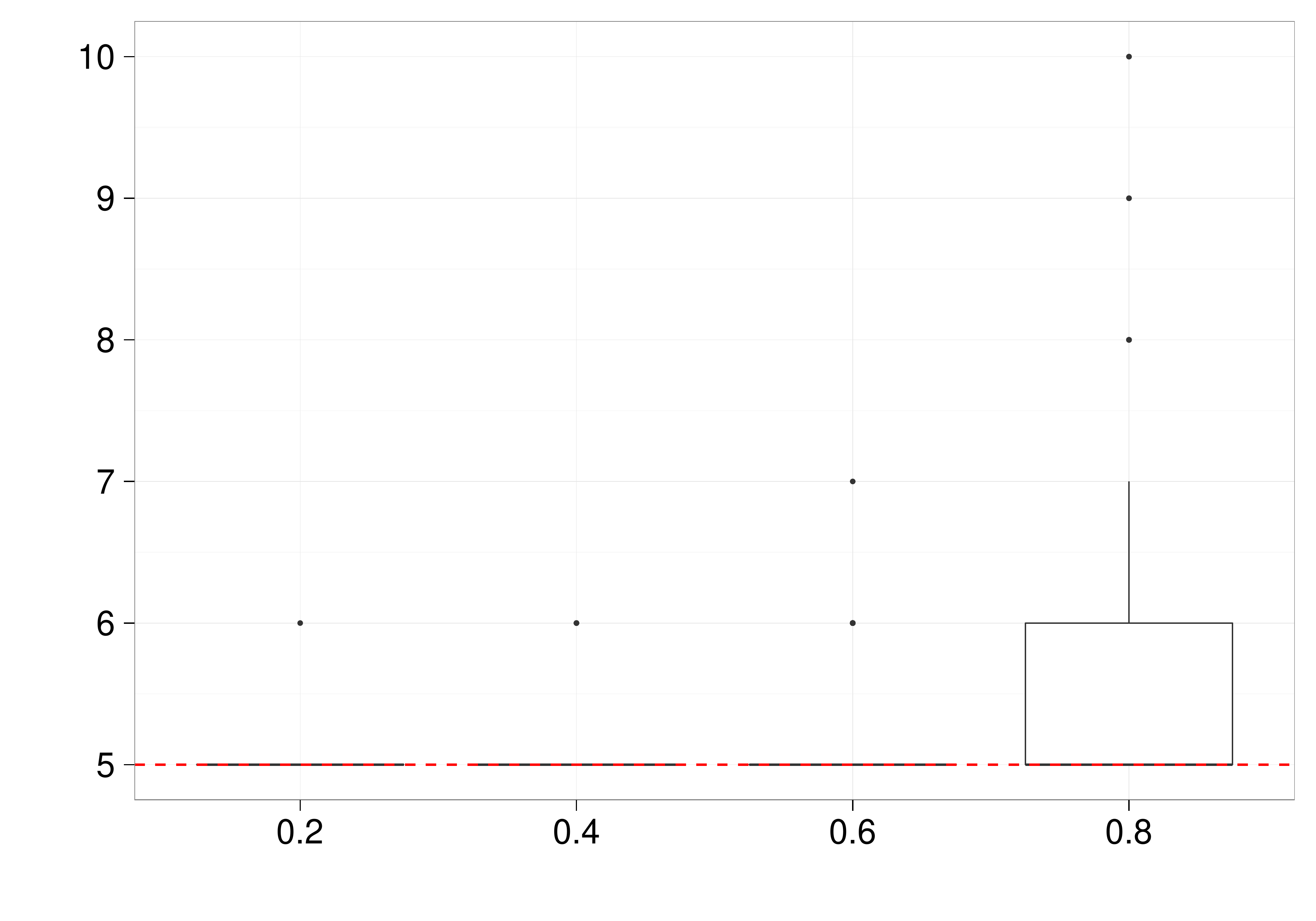}
\end{minipage}&
\begin{minipage}[c]{0.45\textwidth}
\includegraphics[width=\linewidth]{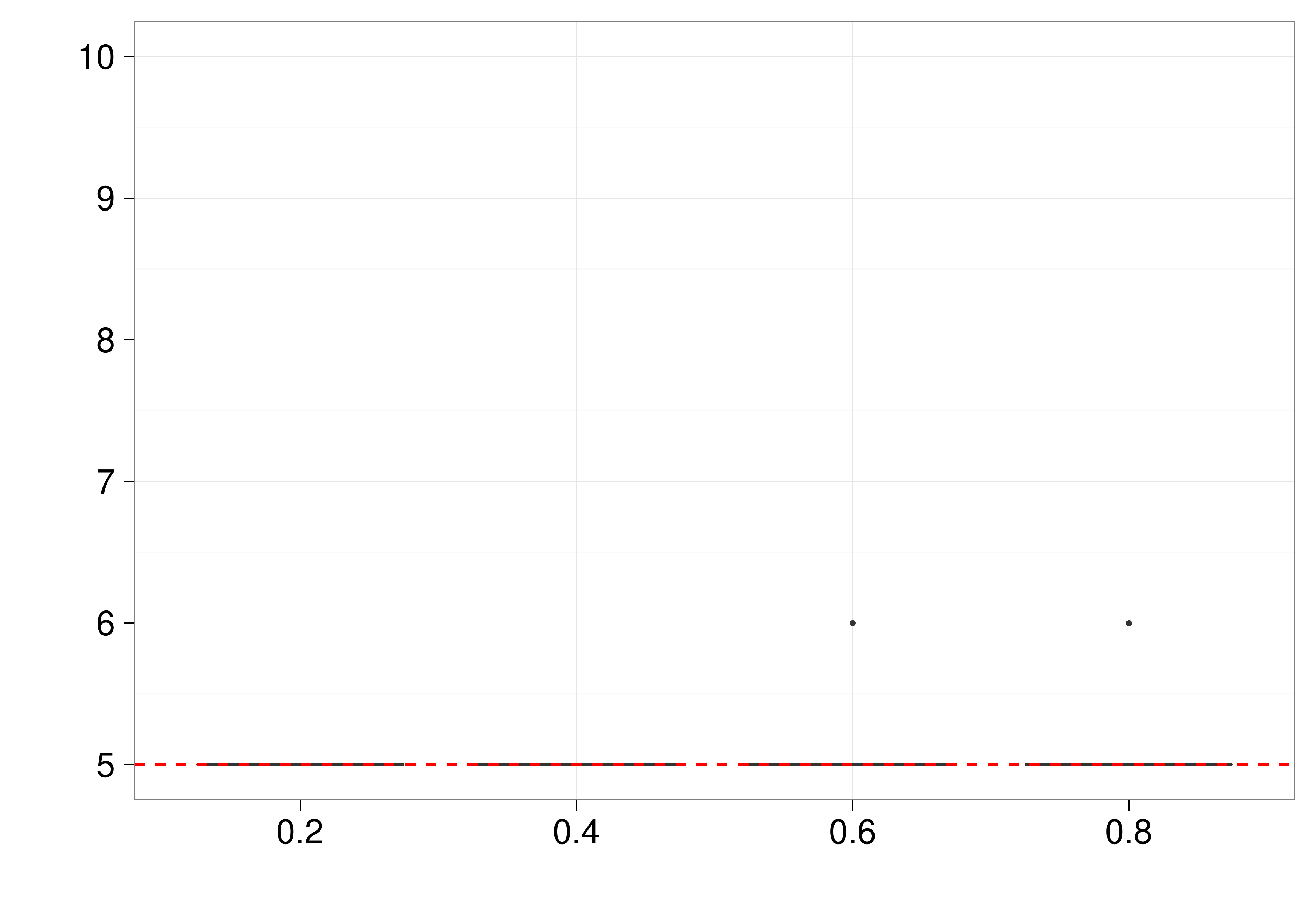}
\end{minipage}\\
& $\omega$ &  $\omega$\\
\rotatebox{90}{\hspace{1.5em}\small $\sigma=4$}&
\begin{minipage}[c]{0.45\textwidth}
\includegraphics[width=\linewidth]{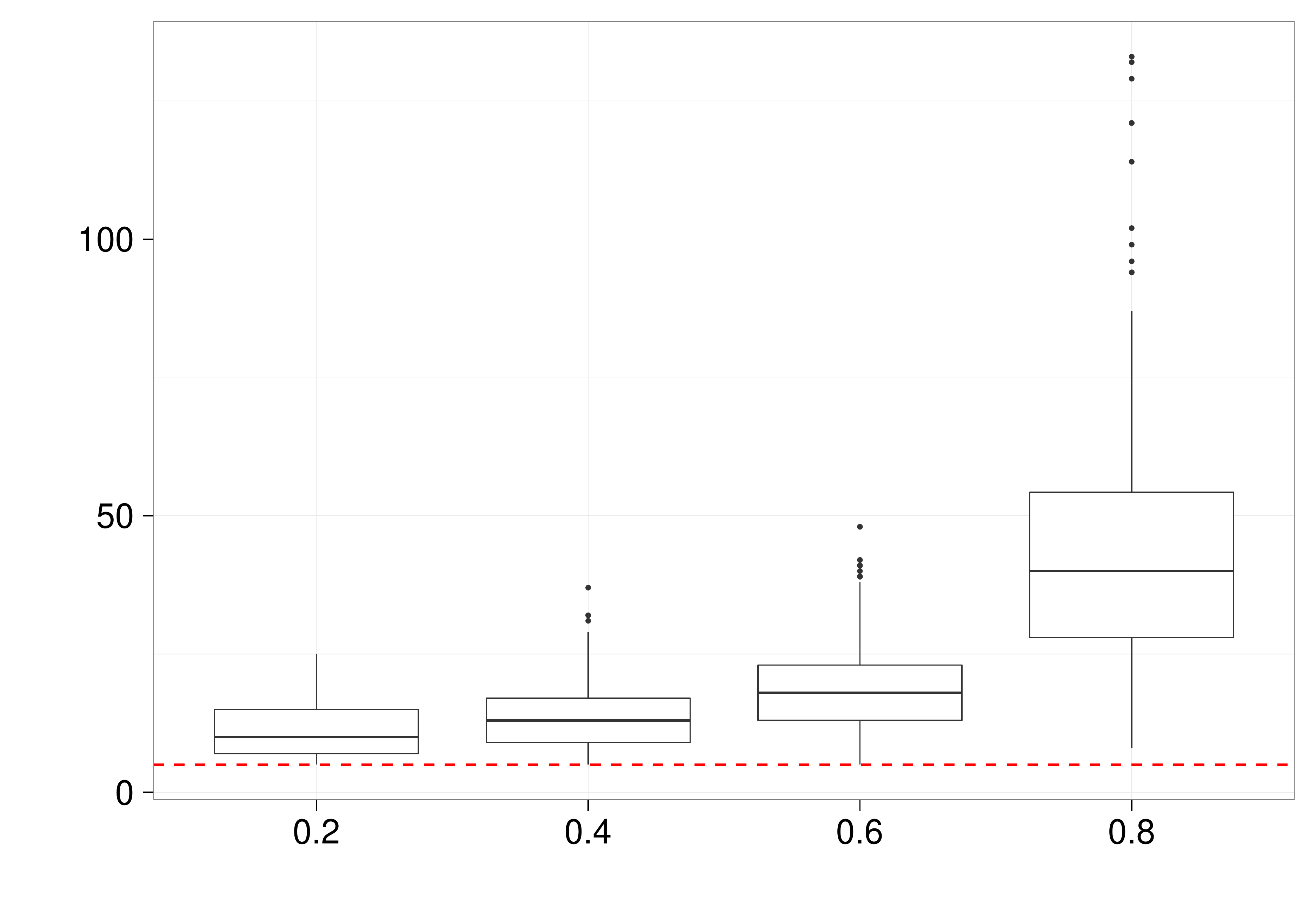}
\end{minipage}&
\begin{minipage}[c]{0.45\textwidth}
\includegraphics[width=\linewidth]{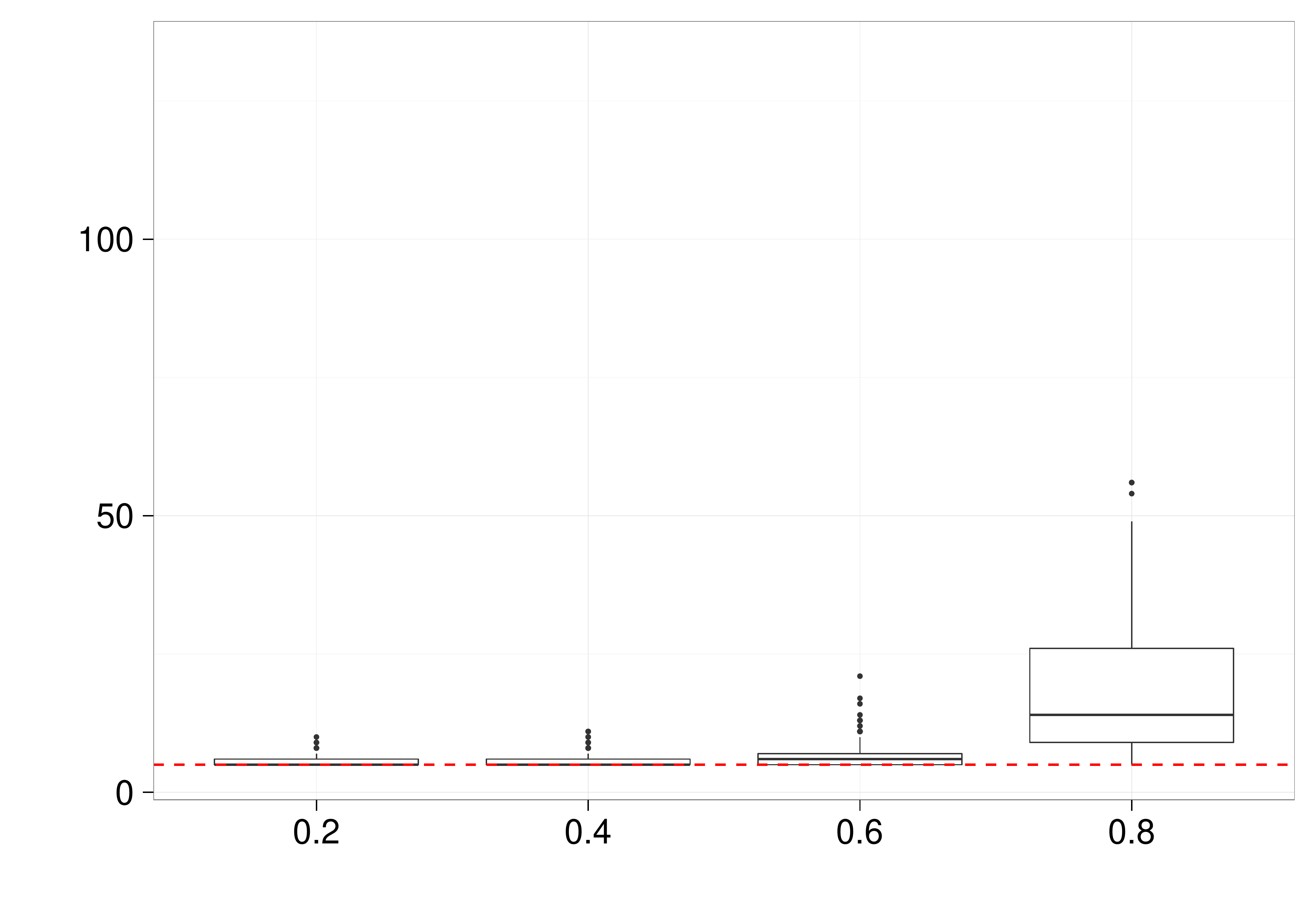}
\end{minipage}\\
& $\omega$ &  $\omega$\\
\end{tabular}
    \caption{\label{fig:mu0} \corr{Boxplots of $\Khat$ for $\sigma=1$ (top) and $\sigma=4$ (bottom) for $\n=500$ (left) and $\n=1500$ (right). 
For each case, the boxplots are displayed as a function of $\omega$.}}
\end{figure}


\section{Discussion}\label{Section:Discussion}

\textcolor{black}{
In this paper, we established that the (slightly modified) least-squares estimators for the number of blocks and their boundaries
in a block diagonal matrix are consistent. 
\corr{Note that the obtained results are non standard in the sense that we
proved that penalizing the least-squares criterion is not required to
obtain a consistent estimator of the number of diagonal blocks. This has
to be contrasted with the one-dimensional case, where it is well-known
that a penalization is required to ensure consistency, see for instance \cite{lavielle2000least}. More
precisely, a close look at the proof of Theorem 9 in \cite{lavielle2000least} shows that a
penalty is required to discard models such that $K>\Ks$. This comes
from the fact that in the one-dimensional setting when $K>\Ks$ the
deterministic part $\Kn$ of $\Jn$ vanishes for all segmentations $\bt$
satisfying $\|\bts-\bt\|_{\mathcal{H}^1}=0$ (\textit{i.e.} for all segmentations $\bt$
nested in the true segmentation $\bts$). This bias term being null, a
penalty term has to be added to the criterion to compensate
the stochastic deviations of the random terms in $\Jn$. In the
two-dimensional setting, the deterministic part $\Kn$ does not vanish
when $K>\Ks$ --as proved in Lemma \ref{Lemme:MinorationKn}-- ensuring consistency.}}


\textcolor{black}{
The framework that we have chosen for proving our results consists in assuming that the observations are independent
and that the size of the observation matrix is large (asymptotic framework),
which is adapted to the analysis of HiC experiments.
From a practical point of view, the independence assumption is not always satisfied, for instance when the observation matrix
is a correlation or a similarity matrix, see for example \cite{dehman:2015,delatola:2015}. Hence, relaxing the independence assumption
to retrieve diagonal block boundaries in such cases would be a natural extension of this paper.
%
\corr{Moreover,  it could be interesting to see if a penalty term needs to be added to our criterion in order to retrieve properly
the break fractions in a non asymptotic setting. This  will be the subject of a future work.}
}





\section{Proofs}\label{Section: appendix}
\subsection{Definition of $\Kn$, $\Vn$, $\Wn$ and $\Zn$}
We define hereafter $\Kn$, $\Vn$, $\Wn$ and $\Zn$ which appear in (\ref{eq:dec:Jn}) by:
\begin{equation}\label{eq:Kn_Vn_Wn}
\Kn(\bt)=\KDn(\bt)+\Kzn(\bt),\; \Vn(\bt)=\VDn(\bt)+\Vzn(\bt),\; \Wn=\WDn(\bt)+\Wzn(\bt),
\end{equation}
and
\begin{multline}\label{eq:KD_Kz}
\KDn(\bt)=\frac{2}{\n(\n+1)}\left(\sum_{\kk=1}^{\K}{\sum_{(\ii,\jj)\in\Dk}{\left(\mbE\left[\Yij\right]-\mbE\left[\YbarDk\right]\right)^2}}\right),\\
\Kzn(\bt)=\frac{2}{\n(\n+1)}\sum_{(\ii,\jj)\in\Gzero}{\left(\mbE\left[\Yij\right]-\mbE\left[\YbarGun\right]\right)^2},
\end{multline}
\begin{multline}\label{eq:VD_Vz}
\VDn(\bt)=\frac{2}{\n(\n+1)}\left[\sum_{\kk=1}^{\Ks}{\frac{\left(\sum_{(\ii,\jj)\in\Dsk}{\epsij}\right)^2}{\left|\Dsk\right|}} -\sum_{\kk=1}^{\K}{\frac{\left(\sum_{(\ip,\jp)\in\Dk}{\epsipjp}\right)^2}{\left|\Dk\right|}}\right],\\
\Vzn(\bt)=\frac{2}{\n(\n+1)}\frac{1}{|\Gun|^2}\left(\sum_{(\ii,\jj)\in\Gun}{\epsij}\right)^2\left(|\Gzero|-|\Gszero|\right),
\end{multline}
\begin{multline}\label{eq:WD_Wz}
\WDn(\bt)=\frac{4}{\n(\n+1)}\left[\sum_{\kk=1}^{\Ks}{\left(\sum_{(\ii,\jj)\in\Dsk}{\epsij}\right)\muks}-\sum_{\kk=1}^{\K}{\left[ \left(\sum_{(\ip,\jp)\in\Dk}{\epsipjp}\right)\mbE\left[\YbarDk\right]\right]}\right],\\
\Wzn(\bt)=\frac{4}{\n(\n+1)}\muzs\left(\sum_{(\ii,\jj)\in\Gszero}{\epsij}-\sum_{(\ii,\jj)\in\Gzero}{\epsij}\right),
\end{multline}
\begin{equation}\label{eq:Z}
\Zn(\bt)=\frac{4}{\n(\n+1)}\frac{1}{|\Gun|}\left(\sum_{(\ii,\jj)\in\Gun}{\epsij}\right) \left[\left(\sum_{(\ii,\jj)\in\Gszero}{\epsij}-\sum_{(\ii,\jj)\in\Gzero}{\epsij}\right)-\sum_{(\ii,\jj)\in\Gzero}(\mbE\left[\Yij\right]-\muzs)\right].
\end{equation}
In the equations, $\Gszero$ and $\Gun$ are defined in (\ref{eq:G00:G01}) and (\ref{eq:G01}) and $\Gzero$ has the same definition
as $\Gszero$ except that $\bts$ is replaced by $\bt$.
\subsection{Proof of Lemma \ref{Lemme:MinorationKn}}\label{Annexe:MinorationKn}

We shall first rewrite $\KDn$ and $\Kzn$ defined by (\ref{eq:KD_Kz}).
Let us first denote by
\begin{equation}\label{eq:comptage_nkl}
\nkl=\left|\Dk\cap\Dsl\right|,
\end{equation}
the number of observations that belong to the intersection of the
two blocks $\Dk$ and $\Dsl$ (with the convention that
$\D_{0}=\Gzero$ and $D_0^\star=\Gzero^\star$) and
$$\nk=\sum_{\el=0}^{\K^{\vrai}}{\nkl} \quad\textrm{and}\quad
\nsl=\sum_{\kk=0}^{\K}{\nkl}.$$
Since $\mbE\left[\YbarGun\right]=\muzs$, $\Gzero\subset\left(\bigcup_{\ell=0}^{\Ks} {D_\ell}^\star\right)$ and
$\mbE\left[\Yij\right]=\muks$, for all $(\ii,\jj)\in\Dsk$, we obtain
\begin{eqnarray}\label{eq:new_def_Kzn}
\Kzn(\bt)&=&\frac{2}{\n(\n+1)}\sum_{(\ii,\jj)\in\Gzero}{\left(\mbE\left[\Yij\right]-\muzs
\right)^2} = \frac{2}{\n(\n+1)}\sum_{\el=0}^{\K^{\vrai}}
\sum_{(\ii,\jj)\in
\Gzero\cap\Dsl} {\left(\mbE\left[\Yij\right]-\muzs \right)^2} \nonumber\\
&=& \frac{2}{\n(\n+1)}\sum_{\el=0}^{\K^{\vrai}} \nzl
{\left(\muls-\muzs \right)^2}.
\end{eqnarray}
Since $|D_k|=\sum_{\el=0}^{\K^{\vrai}} \left|\Dk\cap\Dsl\right|=\sum_{\el=0}^{\K^{\vrai}} {\nkl}=\nk$,
\begin{eqnarray}\label{eq:YbarDk}
\mbE\left[\YbarDk\right]&=&\frac{1}{\nk}\sum_{(\ii,\jj)\in\Dk}{\mbE\left[\Yij\right]}
=\frac{1}{\nk}\sum_{\el=0}^{\K^{\vrai}} \sum_{(\ii,\jj)\in\Dk \cap\Dsl} {{\mbE\left[\Yij\right]}}
=\frac{1}{\nk}\sum_{\el=0}^{\K^{\vrai}}{\muls\nkl},
\end{eqnarray}
where we use for all $\kk\in\{1,\ldots,\K\}$, $\Dk\subset\left(\bigcup_{\ell=0}^{\Ks} {D_\ell}^\star\right)$. Thus,
\begin{eqnarray*}
&&\sum_{(\ii,\jj)\in\Dk}{\left(\mbE\left[\Yij\right]-\mbE\left[\YbarDk\right]\right)^2}
= \frac{1}{\nk^2} \sum_{(\ii,\jj)\in\Dk}{\left(\nk
\mbE\left[\Yij\right] -\sum_{\lp=0}^{\K^{\vrai}} \mulps\nklp
\right)^2} \\
&=&\frac{1}{\nk^2}\sum_{\el=0}^{\K^{\vrai}}\sum_{(\ii,\jj)\in\Dk\cap\Dsl}{\left(\nk
\mbE\left[\Yij\right] -\sum_{\lp=0}^{\K^{\vrai}} \mulps\nklp
\right)^2}
=\frac{1}{\nk^2}\sum_{\el=0}^{\K^{\vrai}}{\nkl\left[\sum_{\lp=0}^{\K^{\vrai}}{\nklp\left(\muls-\mulps\right)}\right]^2}\\
&=&
\frac{1}{\nk^2}\sum_{\el=0}^{\K^{\vrai}}{\sum_{\lun=0}^{\K^{\vrai}}{\sum_{\ldeux=0}^{\K^{\vrai}}{\nkl\nklun\nkldeux\left(\muls-\muluns\right)
\left(\muls-\muldeuxs\right)}}}\\
&=& \frac{1}{\nk^2}\sum_{\el=0}^{\K^{\vrai}}\sum_{\lun=0}^{\K^{\vrai}}\nkl\nklun\left(\muls-\muluns\right)
\sum_{\ldeux=0}^{\K^{\vrai}}\nkldeux\left(\muls-\muldeuxs\right)\\
&=&\frac{1}{\nk}\sum_{\el=0}^{\K^{\vrai}}\sum_{\lun=0}^{\K^{\vrai}}\nkl\nklun\left({\muls}^2-\muluns\muls\right)
-\frac{1}{\nk^2}\sum_{\ldeux=0}^{\K^{\vrai}}\nkldeux\muldeuxs\underbrace{\sum_{\el=0}^{\K^{\vrai}}\sum_{\lun=0}^{\K^{\vrai}}\nkl\nklun\left(\muls-\muluns\right)}_{=0}\\
&=& \frac{1}{\nk}\sum_{\el=0}^{\K^{\vrai}}\sum_{\lun=0}^{\K^{\vrai}}\nkl\nklun\left({\muls}^2-\muluns\muls\right)
=\frac{1}{2\nk}\sum_{\el=0}^{\K^{\vrai}}{\sum_{\lp=0}^{\K^{\vrai}}{\nkl\nklp\left(\muls-\mulps\right)^2}}.
\end{eqnarray*}
Hence,
\begin{equation}\label{eq:new_def_KDn}
\KDn(\bt)=\frac{1}{\n(\n+1)}\sum_{\kk=1}^{\K}\frac{1}{\nk}\sum_{\el=0}^{\K^{\vrai}}{\sum_{\lp=0}^{\K^{\vrai}}{\nkl\nklp\left(\muls-\mulps\right)^2}}.
\end{equation}
\begin{figure}[!h]
\begin{center}
\begin{tabular}{cc}
\hspace{-15mm}\begin{tikzpicture}
\draw[opacity=0.4] (0,0) grid[step=0.5] (8,8);

\draw (1.25,8) node[above,color=blue]{$\ts_{\el-1}$};
\draw (0,6.75) node[left,color=blue]{$\ts_{\el-1}$};
\draw (4.25,8) node[above,color=blue]{$\tsl$};
\draw (0,3.75) node[left,color=blue]{$\tsl$};
\draw (7.25,8) node[above,color=blue]{$\ts_{\el+1}$};
\draw (0,0.75) node[left,color=blue]{$\ts_{\el+1}$};

\draw[ultra thick,color=blue,dotted] (0,8) rectangle (1,7);
\draw[fill=blue,opacity=0.2] (0,8) rectangle (1,7);

\draw[ultra thick,color=blue,dotted] (1,7) rectangle (4,4);
\draw[fill=blue,opacity=0.2] (1,7) rectangle (4,4);

\draw[ultra thick,color=blue,dotted] (4,4) rectangle (7,1);
\draw[fill=blue,opacity=0.2] (4,4) rectangle (7,1);

\draw[ultra thick,color=blue,dotted] (7,1) rectangle (8,0);
\draw[fill=blue,opacity=0.2] (7,1) rectangle (8,0);

\draw (2.25,8) node[above,color=red]{$\ttt_{\el-1}$};
\draw (0,5.75) node[left,color=red]{$\ttt_{\el-1}$};
\draw (6.25,8) node[above,color=red]{$\tl$};
\draw (0,1.75) node[left,color=red]{$\tl$};

\draw[ultra thick,color=red,dotted] (0,8) rectangle (2,5.5);
\draw[fill=red,opacity=0.2] (0,8) rectangle (2,5.5);

\draw[ultra thick,color=red,dotted] (2,5.5) rectangle (6,2);
\draw[fill=red,opacity=0.2] (2,5.5) rectangle (6,2);

\draw[ultra thick,color=red,dotted] (6,2) rectangle (8,0);
\draw[fill=red,opacity=0.2] (6,2) rectangle (8,0);

\draw (1.5,6.25) node{$\n_{\el-1,\el}$};
\draw (3,4.75) node{$\n_{\el,\el}$};
\draw (5,3) node{$\n_{\el,\el+1}$};
\draw (6.5,1.5) node{$\n_{\el+1,\el+1}$};

\draw (3,6.25) node{$\n_{0,\el}$};
\draw (5,4.75) node{$\n_{\el,0}$};
\draw (6.5,3) node{$\n_{0,\el+1}$};

\draw (6.5,6.5) node{$\n_{0,0}$};

\fill[color=gray,opacity=0.3] (0,0) -- (0,8) -- (8,0) -- cycle;
\draw[ultra thick] (0,0) rectangle (8,8);
\end{tikzpicture}&
\hspace{-5mm}\begin{tikzpicture}
\draw[opacity=0.4] (0,0) grid[step=0.5] (8,8);

\draw (1.75,8) node[above,color=blue]{$\ts_{\kk-1}=\ttt_{\el-1}$};
\draw (0,6.25) node[left,color=blue]{$\ts_{\kk-1}$};
\draw (5.25,8) node[above,color=blue]{$\tsk=\ttt_{\el+1}$};
\draw (0,2.75) node[left,color=blue]{$\tsk$};

\draw[ultra thick,color=blue,dotted] (0,8) rectangle (1.5,6.5);
\draw[fill=blue,opacity=0.2] (0,8) rectangle (1.5,6.5);

\draw[ultra thick,color=blue,dotted] (1.5,6.5) rectangle (5,3);
\draw[fill=blue,opacity=0.2] (1.5,6.5) rectangle (5,3);

\draw[ultra thick,color=blue,dotted] (5,3) rectangle (8,0);
\draw[fill=blue,opacity=0.2] (5,3) rectangle (8,0);

\draw (3.25,8) node[above,color=red]{$\tl$};
\draw (0,4.75) node[left,color=red]{$\tl$};

\draw[ultra thick,color=red,dotted] (0,8) rectangle (1.5,6.5);
\draw[fill=red,opacity=0.2] (0,8) rectangle (1.5,6.5);

\draw[ultra thick,color=red,dotted] (1.5,6.5) rectangle (3,5);
\draw[fill=red,opacity=0.2] (1.5,6.5) rectangle (3,5);

\draw[ultra thick,color=red,dotted] (3,5) rectangle (5,3);
\draw[fill=red,opacity=0.2] (3,5) rectangle (5,3);

\draw[ultra thick,color=red,dotted] (5,3) rectangle (8,0);
\draw[fill=red,opacity=0.2] (5,3) rectangle (8,0);

\draw (4,5.75) node{$\n_{0,\kk}$};
\draw (2.25,5.75) node{$\n_{\el,\kk}$};
\draw (0.75,7.25) node{$\n_{\el-1,\kk-1}$};
\draw (4,4) node{$\n_{\el+1,\kk}$};
\draw (6.5,1.5) node{$\n_{\el+2,\kk+1}$};
\draw (6.5,6) node{$\n_{0,0}$};

\fill[color=gray,opacity=0.3] (0,0) -- (0,8) -- (8,0) -- cycle;
\draw[ultra thick] (0,0) rectangle (8,8);
\end{tikzpicture}\\
  \end{tabular}
\end{center}
\caption{
Left: $K<\Ks$. Right: $K>\Ks$.
}
\label{fig:minorationKn}
\end{figure}
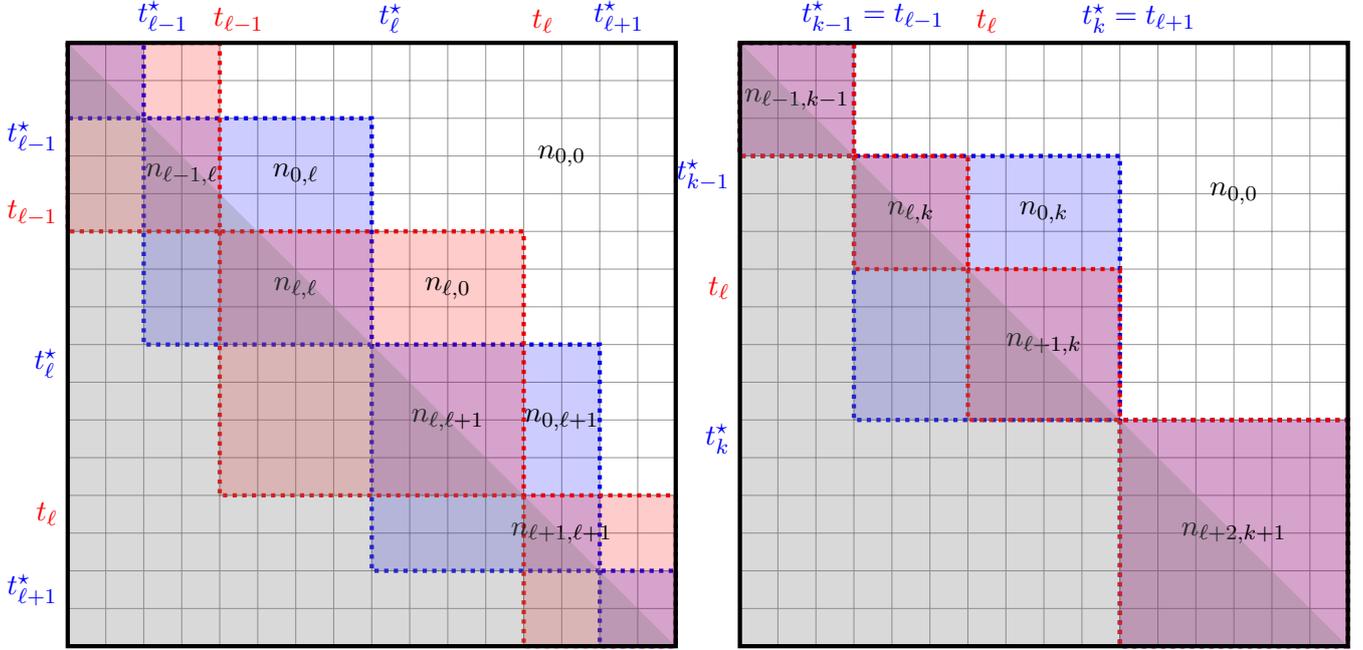


\subsubsection{Case $K<K^{\vrai}$ and $t \in \AnK^{1/\n}$.} Observe that $\Kn(\bt)\geq\KDn(\bt)$.
Since $K<\Ks$, $t_K-\ts_K=\ts_{\Ks}-\ts_K\geq n\Deltataus$. Hence, $\{k, t_k-\ts_k\geq n\Deltataus/2\}\neq\emptyset$.
Let $\ell=\min\{k, t_k-\ts_k\geq \Deltataus/2\}$, then $\el\geq1$ and
$$
t_{\ell-1}\leq\ts_\ell- n\Deltataus/2\leq \ts_\ell+ n\Deltataus/2\leq t_\ell .
$$
By definition of $\Deltataus$,
\begin{eqnarray}\label{eq:nlk}
\n_{\ell,\ell}&=&|\Dl\cap\Dsl|\geq\min\{(\ts_\ell-t_{\ell-1})(\ts_\ell-t_{\ell-1}+1)/2,(\tsl-\ts_{\ell-1})(\tsl-\ts_{\ell-1}+1)/2\}\nonumber\\
&\geq&\left(\n\Deltataus\right)^2/8,
\end{eqnarray}
and
\begin{eqnarray}\label{eq:nl0}
\n_{\el,0}\geq\min\{(\tl-\tsl)(\tsl-t_{\ell-1}),(\ts_{\ell+1}-\tsl)(\tsl-\ts_{\ell-1})\}
\geq\left(\n\Deltataus\right)^2/4.
\end{eqnarray}
Thus, 
using (\ref{eq:nlk}) and (\ref{eq:nl0}), we obtain
\begin{eqnarray*}
\Kn(\bt)&\geq&\frac{1}{\n(\n+1)\nl}\left[\n_{\el,\el}\n_{\el,0}\left(\mus_{0}-\mus_{\ell}\right)^2\right]
        \geq\frac{\lambdainfz^2}{\n(\n+1)\nl}\frac{\left(\n\Deltataus\right)^4}{32}
        \geq\frac{\left(\Deltataus\right)^4\lambdainfz^2}{64},
\end{eqnarray*}
since $\nl\leq n(n+1)/2$.\ \\

\subsubsection{Case $K>K^{\vrai}$ and $\bt \in \AnKDelta$.} 
We have
\begin{eqnarray*}
\Kn(\bt) \geq \Kzn(\bt) \geq \frac{2}{\n(\n+1)}  \n_{0,\kk}
\left(\mu_{\kk}^\star-\muzs \right)^2 \geq \frac{2}{\n(\n+1)}  \ \n_{0,\kk} \
\lambdainfz^2
\end{eqnarray*}
for any $k \in \left\{0,\ldots,\Ks\right\}$. Since $\bt\in\AnKDelta$, there exists $\el\in\{1,\ldots,\K-1\}$ such that for all $\kk\in\{0,\ldots,\Ks\}$
\[\left|\tsk-\tl\right|>\frac{\n\Deltan}{2},\]
(otherwise, it will imply that $\K\leq\Ks$). Moreover, let us choose $\kk$ such that $\ts_{\kk-1}+\n\Deltan/2<\tl<\tsk-\n\Deltan/2$ then
\[\n_{0,\kk}\geq\left(\tl-\ts_{\kk-1}\right)\left(\tsk-\tl\right)\geq\left(\frac{\n\Deltan}{2}\right)^2.\]
This leads to
\[\Kn(\bt)\geq\frac{1}{4}\lambdainfz^2\Deltan^2.\]


\subsubsection{Case $K = K^{\vrai}$ and $t \in \AnK^{1/\n}, \left\|\bt-\bts\right\|_{\infty} > \n\delta $.}

We have
\begin{equation}\label{eq:Bborneinf}
B_n(t)\geq\frac{1}{n(n+1)}\frac{1}{n_{\el}}n_{\el,\el^\prime}n_{\el,0} (\mu_0^{\star}-\mu_{\el^\prime}^{\star})^2
\end{equation}
for every $\el \in \left\{1,\ldots,K\right\}$ and every $\el^\prime \in \left\{1,\ldots,K^{\star}\right\}$. Then, we shall consider two cases:
{\it i)} $\left\|\bt-\bts\right\|_{\infty} < \frac{\n\Deltataus}{2}$ and {\it ii)} $\left\|\bt-\bts\right\|_{\infty} \geq \frac{\n\Deltataus}{2}$.\\

{\it i)} $\left\|\bt-\bts\right\|_{\infty} < \frac{\n\Deltataus}{2}$.\\

\begin{figure}[!h]
\begin{center}
\begin{tabular}{cc}
\hspace{-15mm}\begin{tikzpicture}
\draw[opacity=0.4] (0,0) grid[step=0.5] (8,8);

\draw (1.75,8) node[above,color=blue]{$\ts_{\kk-1}$};
\draw (0,6.25) node[left,color=blue]{$\ts_{\kk-1}$};
\draw (5.25,8) node[above,color=blue]{$\tsk$};
\draw (0,2.75) node[left,color=blue]{$\tsk$};

\draw[ultra thick,color=blue,dotted] (0,8) rectangle (1.5,6.5);
\draw[fill=blue,opacity=0.2] (0,8) rectangle (1.5,6.5);

\draw[ultra thick,color=blue,dotted] (1.5,6.5) rectangle (5,3);
\draw[fill=blue,opacity=0.2] (1.5,6.5) rectangle (5,3);

\draw[ultra thick,color=blue,dotted] (5,3) rectangle (8,0);
\draw[fill=blue,opacity=0.2] (5,3) rectangle (8,0);

\draw (3.25,8) node[above,color=red]{$\ttt_{\kk-1}$};
\draw (0,4.75) node[left,color=red]{$\ttt_{\kk-1}$};

\draw (7.25,8) node[above,color=red]{$\tk$};
\draw (0,0.75) node[left,color=red]{$\tk$};

\draw[ultra thick,color=red,dotted] (0,8) rectangle (3,5);
\draw[fill=red,opacity=0.2] (0,8) rectangle (3,5);

\draw[ultra thick,color=red,dotted] (3,5) rectangle (7,1);
\draw[fill=red,opacity=0.2] (3,5) rectangle (7,1);

\draw[ultra thick,color=red,dotted] (7,1) rectangle (8,0);
\draw[fill=red,opacity=0.2] (7,1) rectangle (8,0);

\draw (0.75,7.25) node{$\n_{\kk-1,\kk-1}$};
\draw (2.25,5.75) node{$\n_{\kk-1,\kk}$};
\draw (4,4) node{$\n_{\kk,\kk}$};
\draw (6,2) node{$\n_{\kk,\kk+1}$};
\draw (7.5,0.5) node{$\n_{\kk+1,\kk+1}$};

\draw (2.25,7.25) node{$\n_{\kk-1,0}$};
\draw (4,5.75) node{$\n_{0,\kk}$};
\draw (6,4) node{$\n_{\kk,0}$};
\draw (7.5,2) node{$\n_{0,\kk+1}$};

\draw (6.5,6.5) node{$\n_{0,0}$};

\fill[color=gray,opacity=0.3] (0,0) -- (0,8) -- (8,0) -- cycle;
\draw[ultra thick] (0,0) rectangle (8,8);
\end{tikzpicture}&
\hspace{-5mm}\begin{tikzpicture}
\draw[opacity=0.4] (0,0) grid[step=0.5] (8,8);

\draw (3.25,8) node[above,color=blue]{$\ts_{\kk-1}$};
\draw (0,4.75) node[left,color=blue]{$\ts_{\kk-1}$};
\draw (5.25,8) node[above,color=blue]{$\tsk$};
\draw (0,2.75) node[left,color=blue]{$\tsk$};

\draw[ultra thick,color=blue,dotted] (0,8) rectangle (3,5);
\draw[fill=blue,opacity=0.2] (0,8) rectangle (3,5);

\draw[ultra thick,color=blue,dotted] (3,5) rectangle (5,3);
\draw[fill=blue,opacity=0.2] (3,5) rectangle (5,3);

\draw[ultra thick,color=blue,dotted] (5,3) rectangle (8,0);
\draw[fill=blue,opacity=0.2] (5,3) rectangle (8,0);

\draw (1.75,8) node[above,color=red]{$\ttt_{\kk-1}$};
\draw (0,6.25) node[left,color=red]{$\ttt_{\kk-1}$};

\draw (7.25,8) node[above,color=red]{$\tk$};
\draw (0,0.75) node[left,color=red]{$\tk$};

\draw[ultra thick,color=red,dotted] (0,8) rectangle (1.5,6.5);
\draw[fill=red,opacity=0.2] (0,8) rectangle (1.5,6.5);

\draw[ultra thick,color=red,dotted] (1.5,6.5) rectangle (7,1);
\draw[fill=red,opacity=0.2] (1.5,6.5) rectangle (7,1);

\draw[ultra thick,color=red,dotted] (7,1) rectangle (8,0);
\draw[fill=red,opacity=0.2] (7,1) rectangle (8,0);

\draw (0.75,7.25) node{$\n_{\kk-1,\kk-1}$};
\draw (2.25,5.75) node{$\n_{\kk,\kk-1}$};
\draw (4,4) node{$\n_{\kk,\kk}$};
\draw (6,2) node{$\n_{\kk,\kk+1}$};
\draw (7.5,0.5) node{$\n_{\kk+1,\kk+1}$};

\draw (2.25,7.25) node{$\n_{0,\kk-1}$};
\draw (6,5.25) node{$\n_{\kk,0}$};
\draw (7.5,2) node{$\n_{0,\kk+1}$};

\draw (6.5,7.25) node{$\n_{0,0}$};

\fill[color=gray,opacity=0.3] (0,0) -- (0,8) -- (8,0) -- cycle;
\draw[ultra thick] (0,0) rectangle (8,8);
\end{tikzpicture}\\
  \end{tabular}
\end{center}
\caption{ $K=\Ks$ and $\left\|\bt-\bts\right\|_{\infty} < \frac{\n\Deltataus}{2}$.
Left: $\ts_{\kk-1}<\ttt_{\kk-1}<\tsk<\tk$. Right: $\ttt_{\kk-1}<\ts_{\kk-1}<\tsk<\tk$.}
\label{fig:minorationKegalK}
\end{figure}

 We shall assume that $t_k-t_k^{\star} = \left\|\bt-\bts\right\|_{\infty} > 0$.\\

There are two possible configurations (see Figure~\ref{fig:minorationKegalK}). If $t_{k-1}^\star < t_{k-1} < t_k^\star <t_k$, then, by definition of $\Deltataus$, we obtain
\begin{equation}\label{eq:nkk1}
n_{k,k} = \frac{(t_k^\star-t_{k-1})(t_k^\star-t_{k-1}+1)}{2} \geq \frac{\left(\overbrace{(t_k^{\star}-t_{k-1}^\star)}^{\geq n \Deltataus}-\overbrace{(t_{k-1}-t_{k-1}^\star)}^{\leq \left\|\bt-\bts\right\|_{\infty}}\right)(t_k^\star-t_{k-1}+1)}{2}\geq \frac{(n\Deltataus)^2}{8}.
\end{equation}
Otherwise, if $t_{k-1} < t_{k-1}^\star < t_k^\star <t_k$, we obtain
\begin{equation}\label{eq:nkk2}
n_{k,k} = \frac{(t_k^\star-t_{k-1}^\star)(t_k^\star-t_{k-1}^\star+1)}{2} \geq \frac{(n\Deltataus)^2}{2} \geq \frac{(n\Deltataus)^2}{8}.
\end{equation}
Then, by using the above decomposition of $(t_k^\star-t_{k-1})$, we obtain
\begin{eqnarray} \label{eq:n0k}
n_{k,0} & \geq & (t_k^{\star}-t_{k-1})(t_k-t_k^{\star}),  \nonumber\\
& \geq & \left(\underbrace{(t_k^{\star}-t_{k-1}^\star)}_{\geq n \Deltataus}-\underbrace{(t_{k-1}-t_{k-1}^\star)}_{\leq \left\|\bt-\bts\right\|_{\infty}}\right)\underbrace{(t_k-t_k^{\star})}_{=\left\|\bt-\bts\right\|_{\infty}} \geq \frac{\Deltataus}{2} n^2\delta.
\end{eqnarray}

By choosing $(\el=k,\el^\prime=k)$ in \eqref{eq:Bborneinf}, and by using \eqref{eq:nkk1}, \eqref{eq:nkk2} and \eqref{eq:n0k}, we obtain
\begin{equation*}
B_n(t)\geq \frac{1}{n(n+1)}\frac{1}{n_{k}}\frac{(n\Deltataus)^2}{8} \frac{\Deltataus}{2}n^2\delta\lambdainfz^2 \geq \frac{(\Deltataus)^3}{32}\delta \lambdainfz^2.
\end{equation*}

{\it ii)} $\left\|\bt-\bts\right\|_{\infty} \geq \frac{\n\Deltataus}{2}$.\\\begin{figure}[!h]
\begin{center}
\begin{tabular}{cc}
\hspace{-15mm}\begin{tikzpicture}
\draw[opacity=0.4] (0,0) grid[step=0.5] (8,8);

\draw (1.25,8) node[above,color=blue]{$\ts_{\kk-1}$};
\draw (0,6.75) node[left,color=blue]{$\ts_{\kk-1}$};
\draw (3.75,8) node[above,color=blue]{$\tsk$};
\draw (0,4.25) node[left,color=blue]{$\tsk$};
\draw (7.25,8) node[above,color=blue]{$\ts_{\kk+1}$};
\draw (0,0.75) node[left,color=blue]{$\ts_{\kk+1}$};

\draw[ultra thick,color=blue,dotted] (0,8) rectangle (1,7);
\draw[fill=blue,opacity=0.2] (0,8) rectangle (1,7);

\draw[ultra thick,color=blue,dotted] (1,7) rectangle (3.5,4.5);
\draw[fill=blue,opacity=0.2] (1,7) rectangle (3.5,4.5);

\draw[ultra thick,color=blue,dotted] (3.5,4.5) rectangle (7,1);
\draw[fill=blue,opacity=0.2] (3.5,4.5) rectangle (7,1);

\draw[ultra thick,color=blue,dotted] (7,1) rectangle (8,0);
\draw[fill=blue,opacity=0.2] (7,1) rectangle (8,0);

\draw (2.25,8) node[above,color=red]{$\ttt_{\kk}$};
\draw (0,5.75) node[left,color=red]{$\ttt_{\kk}$};

\draw (5.25,8) node[above,color=red]{$\ttt_{\kk+1}$};
\draw (0,2.75) node[left,color=red]{$\ttt_{\kk+1}$};

\draw[ultra thick,color=red,dotted] (0,8) rectangle (2,6);
\draw[fill=red,opacity=0.2] (0,8) rectangle (2,6);

\draw[ultra thick,color=red,dotted] (2,6) rectangle (5,3);
\draw[fill=red,opacity=0.2] (2,6) rectangle (5,3);

\draw[ultra thick,color=red,dotted] (5,3) rectangle (8,0);
\draw[fill=red,opacity=0.2] (5,3) rectangle (8,0);

\draw (0.5,7.55) node{$\n_{\kk,\kk-1}$};
\draw (1.5,6.5) node{$\n_{\kk,\kk}$};
\draw (2.75,5.25) node{$\n_{\kk+1,\kk}$};
\draw (4.25,3.75) node{$\n_{\kk+1,\kk+1}$};
\draw (6,2) node{$\n_{\kk+2,\kk+1}$};
\draw (7.5,0.5) node{$\n_{\kk+2,\kk+2}$};

\draw (1.5,7.5) node{$\n_{\kk,0}$};
\draw (2.75,6.5) node{$\n_{0,\kk}$};
\draw (4.25,5.25) node{$\n_{\kk+1,0}$};
\draw (6,3.75) node{$\n_{0,\kk+1}$};
\draw (7.5,2) node{$\n_{\kk+2,0}$};

\draw (6.5,6.5) node{$\n_{0,0}$};

\fill[color=gray,opacity=0.3] (0,0) -- (0,8) -- (8,0) -- cycle;
\draw[ultra thick] (0,0) rectangle (8,8);
\end{tikzpicture}&
\hspace{-5mm}\begin{tikzpicture}
\draw[opacity=0.4] (0,0) grid[step=0.5] (8,8);

\draw (1.25,8) node[above,color=blue]{$\ts_{\kk-1}$};
\draw (0,6.75) node[left,color=blue]{$\ts_{\kk-1}$};
\draw (3.75,8) node[above,color=blue]{$\tsk$};
\draw (0,4.25) node[left,color=blue]{$\tsk$};
\draw (5.25,8) node[above,color=blue]{$\ts_{\kk+1}$};
\draw (0,2.75) node[left,color=blue]{$\ts_{\kk+1}$};

\draw[ultra thick,color=blue,dotted] (0,8) rectangle (1,7);
\draw[fill=blue,opacity=0.2] (0,8) rectangle (1,7);

\draw[ultra thick,color=blue,dotted] (1,7) rectangle (3.5,4.5);
\draw[fill=blue,opacity=0.2] (1,7) rectangle (3.5,4.5);

\draw[ultra thick,color=blue,dotted] (3.5,4.5) rectangle (5,3);
\draw[fill=blue,opacity=0.2] (3.5,4.5) rectangle (5,3);

\draw[ultra thick,color=blue,dotted] (5,3) rectangle (8,0);
\draw[fill=blue,opacity=0.2] (5,3) rectangle (8,0);

\draw (2.25,8) node[above,color=red]{$\ttt_{\kk}$};
\draw (0,5.75) node[left,color=red]{$\ttt_{\kk}$};

\draw (7.25,8) node[above,color=red]{$\ttt_{\kk+1}$};
\draw (0,0.75) node[left,color=red]{$\ttt_{\kk+1}$};

\draw[ultra thick,color=red,dotted] (0,8) rectangle (2,6);
\draw[fill=red,opacity=0.2] (0,8) rectangle (2,6);

\draw[ultra thick,color=red,dotted] (2,6) rectangle (7,1);
\draw[fill=red,opacity=0.2] (2,6) rectangle (7,1);

\draw[ultra thick,color=red,dotted] (7,1) rectangle (8,0);
\draw[fill=red,opacity=0.2] (7,1) rectangle (8,0);

\draw (0.5,7.55) node{$\n_{\kk,\kk-1}$};
\draw (1.5,6.5) node{$\n_{\kk,\kk}$};
\draw (2.75,5.25) node{$\n_{\kk+1,\kk}$};
\draw (4.25,3.75) node{$\n_{\kk+1,\kk+1}$};
\draw (6,2) node{$\n_{\kk+1,\kk+2}$};
\draw (7.5,0.5) node{$\n_{\kk+2,\kk+2}$};

\draw (1.5,7.5) node{$\n_{\kk,0}$};
\draw (2.75,6.5) node{$\n_{0,\kk}$};
\draw (6,5) node{$\n_{\kk+1,0}$};
\draw (7.5,2) node{$\n_{\kk+2,0}$};

\draw (6.5,6.5) node{$\n_{0,0}$};

\fill[color=gray,opacity=0.3] (0,0) -- (0,8) -- (8,0) -- cycle;
\draw[ultra thick] (0,0) rectangle (8,8);
\end{tikzpicture}\\
  \end{tabular}
\end{center}
\caption{ $K=\Ks$ and $\left\|\bt-\bts\right\|_{\infty} \geq \frac{\n\Deltataus}{2}$.
Left: $t_{k} < t_{k}^\star < t_{k+1}^\star <t_{k+1}$. Right: $t_{k} < t_{k}^\star < t_{k+1} <t_{k+1}^\star$.}
\label{fig:minorationKegalKbis}
\end{figure}

Since $K=K^{\vrai}$, there exists $k$ such that $t_{k}^{\star}-t_{k}\geq n \frac{\Deltataus}{2}$ and
$t_{k+1}-t_{k}^{\star}\geq n \frac{\Deltataus}{2}$ (otherwise, this would imply that $\K>\Ks$).
As above, there are two possible cases, either $t_{k} < t_{k}^\star < t_{k+1}^\star <t_{k+1}$ or
$t_{k} < t_{k}^\star < t_{k+1} <t_{k+1}^\star$ (see Figure~\ref{fig:minorationKegalKbis}).

If $t_{k} < t_{k}^\star < t_{k+1}^\star <t_{k+1}$, we obtain, by definition of $\Deltataus$,
\begin{equation}\label{eq:nk1k11}
n_{k+1,k+1}=\frac{(t_{k+1}^{\star}-t_{k}^{\star})(t_{k+1}^{\star}-t_{k}^{\star}+1)}{2}\geq \frac{(n\Deltataus)^2}{2},
\end{equation}
and
\begin{equation}\label{eq:nk101}
n_{k+1,0} \geq (t_{k+1}^{\star}-t_{k}^{\star})(t_k^{\star}-t_k) \geq (n\Deltataus)\frac{(n\Deltataus)}{2}.
\end{equation}

If $t_{k} < t_{k}^\star < t_{k+1} <t_{k+1}^\star$, we obtain
\begin{equation}\label{eq:nk1k12}
n_{k+1,k+1}=\frac{(t_{k+1}-t_{k}^{\star})(t_{k+1}-t_{k}^{\star}+1)}{2}\geq \frac{1}{2}\left(\frac{n\Deltataus}{2}\right)^2,
\end{equation}
and
\begin{equation}\label{eq:nk102}
n_{k+1,0} \geq (t_{k+1}-t_{k}^{\star})(t_k^{\star}-t_k) \geq \left(\frac{(n\Deltataus)}{2}\right)^2.
\end{equation}

By choosing $(\el=\el^\prime=k+1)$ in \eqref{eq:Bborneinf}, and by using \eqref{eq:nk1k11}, \eqref{eq:nk101}, \eqref{eq:nk1k12} and \eqref{eq:nk102}, we obtain

\begin{equation*}
B_n(t) \geq \frac{(\Deltataus)^4}{32} \lambdainfz^2.
\end{equation*}

\subsection{Deviation inequalities}
\begin{lemma}\label{lem:maj:Vn}
For all $\alpha>0$,
\begin{eqnarray*}
\Prob\left(-\underset{\bt\in\AnK^{1/\n}}{\min}\Vn(\bt) \geq \alpha\right) &\leq& n(n+1)e^{-\frac{n(n+1)\alpha}{16K\beta}} + 2e^{-\frac{|\Gun|\alpha}{8\beta}},
\end{eqnarray*}
where $\Vn$ is defined by (\ref{eq:Kn_Vn_Wn}) and (\ref{eq:VD_Vz}) and $\AnK^{1/\n}$ is defined in (\ref{eq:AnK}) with $\Delta_n=1/n$.
Moreover, if $\alpha=\alpha_n$ is such that $\alpha_n n^2/\log(n)\to\infty$ and $\alpha_n |G_{0,1}|\to\infty$,
as $n$ tends to infinity, then
$$
\Prob\left(-\underset{\bt\in\AnK^{1/\n}}{\min}\Vn(\bt) \geq \alpha_n\right)\to 0, \textrm { as } n\to+\infty.
$$
\end{lemma}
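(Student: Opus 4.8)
The plan is to split $-\Vn(\bt)=-\VDn(\bt)-\Vzn(\bt)$ and to use
\[
\Prob\Big(-\min_{\bt\in\AnK^{1/\n}}\Vn(\bt)\geq\alpha\Big)\leq
\Prob\Big(\max_{\bt\in\AnK^{1/\n}}(-\VDn(\bt))\geq\tfrac{\alpha}{2}\Big)
+\Prob\Big(\max_{\bt\in\AnK^{1/\n}}(-\Vzn(\bt))\geq\tfrac{\alpha}{2}\Big),
\]
since $-\Vn(\bt)\geq\alpha$ for some $\bt$ forces $-\VDn(\bt)\geq\alpha/2$ or $-\Vzn(\bt)\geq\alpha/2$ for that same $\bt$. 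The only probabilistic tool needed is that, by (A\ref{hyp:eps}), a partial sum $S=\sum_{(\ii,\jj)\in A}\epsij$ over a finite set $A$ with $|A|=m$ satisfies $\mathbb{E}[e^{\nu S}]\leq e^{m\beta\nu^2}$ for every $\nu\in\rset$, hence $\Prob(|S|\geq t)\leq 2e^{-t^2/(4m\beta)}$, and in particular $\Prob(S^2/m\geq x)\leq 2e^{-x/(4\beta)}$.

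For the $\VDn$ term, the essential point — and where the two-dimensional structure helps — is that, although there are exponentially many admissible segmentations, after discarding the nonnegative subtracted sum one has $-\VDn(\bt)\leq \frac{2}{\n(\n+1)}\sum_{\kk=1}^{\K}\big(\sum_{(\ii,\jj)\in\Dk}\epsij\big)^2/|\Dk|$, a sum of only $\K$ terms, each depending on a single block $\Dk=\{(\ii,\jj):\ttt_{\kk-1}\leq\ii\leq\jj\leq\tk-1\}$, and there are at most $\binom{\n+1}{2}=\n(\n+1)/2$ possible such blocks. Therefore $\max_{\bt}(-\VDn(\bt))\leq \frac{2\K}{\n(\n+1)}\max_{D}\big(\sum_{(\ii,\jj)\in D}\epsij\big)^2/|D|$, the maximum being over all blocks $D$, and a union bound over these $\n(\n+1)/2$ blocks, each controlled by $\Prob(S_D^2/|D|\geq x)\leq 2e^{-x/(4\beta)}$ with $x=\alpha\n(\n+1)/(4\K)$, yields the first term $\n(\n+1)e^{-\n(\n+1)\alpha/(16\K\beta)}$.

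For the $\Vzn$ term, note that $\Gun$ is fixed, so the random part $\sum_{(\ii,\jj)\in\Gun}\epsij$ does not depend on $\bt$; only the factor $|\Gzero|-|\Gszero|$ does, and it is at most $|\Gszero|\leq\n(\n+1)/2$ (the contribution being nonpositive when this factor is negative). Hence $\max_{\bt}(-\Vzn(\bt))\leq \big(\sum_{(\ii,\jj)\in\Gun}\epsij\big)^2/|\Gun|^2$, so that $\Prob(\max_{\bt}(-\Vzn(\bt))\geq\alpha/2)\leq\Prob\big(|\sum_{(\ii,\jj)\in\Gun}\epsij|\geq|\Gun|\sqrt{\alpha/2}\big)\leq 2e^{-|\Gun|\alpha/(8\beta)}$, which is the second term. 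Adding the two bounds gives the stated inequality. For the limit, the first term equals $\exp\big(\log(\n(\n+1))-\n(\n+1)\alpha_n/(16\K\beta)\big)$ and its exponent tends to $-\infty$ since $\alpha_n\n^2/\log\n\to\infty$, while the second term tends to $0$ since $|\Gun|\alpha_n\to\infty$. The only delicate step is the passage from a maximum over segmentations to a polynomial union bound over blocks; the remainder is the elementary sub-Gaussian computation recalled above, and I do not expect further obstacles.
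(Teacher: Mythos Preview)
Your proof is correct and follows essentially the same route as the paper: split $\Vn=\VDn+\Vzn$, for the diagonal part drop the nonnegative term, bound by $\frac{2\K}{\n(\n+1)}\max_{D}(\sum\epsij)^2/|D|$ over the $\mcO(\n^2)$ possible blocks and take a union bound with the sub-Gaussian tail, and for $\Vzn$ use $|\Gszero|\leq \n(\n+1)/2$ to reduce to a single sub-Gaussian tail over the fixed set $\Gun$. One cosmetic remark: in your $\Vzn$ paragraph the phrase ``the factor $|\Gzero|-|\Gszero|$ \ldots is at most $|\Gszero|$'' should really read that $|\Gszero|-|\Gzero|\leq|\Gszero|$ (which is the quantity entering $-\Vzn$); the bound you actually use is the correct one.
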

\textcolor{black}{The proof is given in the supplementary material.}

\begin{lemma}\label{lem:maj:Wn}
Let $\Wn$ be defined by (\ref{eq:Kn_Vn_Wn}) and (\ref{eq:WD_Wz}), then there exists $C_1>0$ such that for all $\alpha>0$ :
\begin{eqnarray*}
\Prob\left(-\underset{\bt\in\AnK^{1/\n}}{\min}\Wn(\bt)\geq\alpha\right)
&\leq&C_1\n^{4\Kmax}\exp\left[-\frac{\alpha^2\n(\n+1)}{128\, \betaa \left(\K+1\right)^2\left(\Ks+1\right)^2{\lambdasup}^2}\right],
\end{eqnarray*}
where $\lambdasup=\underset{\kk\neq\el}{\sup}\left|\muks-\muls\right|$ and $\AnK^{1/\n}$
is defined in (\ref{eq:AnK}) with $\Delta_n=1/n$.
Moreover, if $\alpha=\alpha_n$ is such that $\alpha_n^2 n^2/\log(n)\to\infty$,
as $n$ tends to infinity, then
$$
\Prob\left(-\underset{\bt\in\AnK^{1/\n}}{\min}\Wn(\bt) \geq \alpha_n\right)\to 0, \textrm { as } n\to\infty.
$$
\end{lemma}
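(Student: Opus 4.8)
The plan is to treat $-\Wn(\bt)$, for each fixed segmentation $\bt\in\AnK^{1/\n}$, as a linear form $\sum_{1\le\ii\le\jj\le\n}a_{\ii,\jj}(\bt)\,\epsij$ in the independent centred noise variables, to bound it for fixed $\bt$ by a sub-Gaussian (Chernoff) inequality based on (A\ref{hyp:eps}), and then to take a union bound over the polynomially many configurations $\bt$ can induce. The probability in the statement is exactly $\Prob\big(\exists\,\bt\in\AnK^{1/\n}:\ -\Wn(\bt)\ge\alpha\big)$, so such a scheme applies directly.

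First I would rewrite $\Wn(\bt)=\WDn(\bt)+\Wzn(\bt)$ as a weighted sum of partial sums of the $\epsij$. Using the identity $\mbE[\YbarDk]=\frac{1}{\nk}\sum_{\el=0}^{\Ks}\muls\,\nkl$ already obtained in (\ref{eq:YbarDk}), together with the decomposition $\sum_{(\ii,\jj)\in\Dk}\epsij=\sum_{\el=0}^{\Ks}\sum_{(\ii,\jj)\in\Dk\cap\Dsl}\epsij$ and the conventions $\D_0=\Gzero$, $\D_0^\star=\Gszero$, one can write
\[
-\Wn(\bt)=\frac{4}{\n(\n+1)}\sum_{\kk=0}^{\K}\sum_{\el=0}^{\Ks}c_{\kk,\el}(\bt)\,M_{\kk,\el}(\bt),\qquad M_{\kk,\el}(\bt):=\sum_{(\ii,\jj)\in\Dk\cap\Dsl}\epsij,
\]
where, after collecting the contributions of $\WDn$ and $\Wzn$, every coefficient $c_{\kk,\el}(\bt)$ is a difference of the form $\muls-\mbE[\YbarDk]$, $\muls-\muzs$, or $\muzs-\mbE[\YbarDk]$. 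Since $\mbE[\YbarDk]$ is a convex combination of the true means $\muu_0^\star,\dots,\muu_{\Ks}^\star$, each such difference is bounded in absolute value by $\lambdasup$; moreover there are at most $(\K+1)(\Ks+1)$ terms $M_{\kk,\el}(\bt)$, the regions $\Dk\cap\Dsl$ are pairwise disjoint and contained in $\{1\le\ii\le\jj\le\n\}$, and $|\Dk\cap\Dsl|\le\n(\n+1)/2$.

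For fixed $\bt$, I would then split $\alpha$ over the $(\K+1)(\Ks+1)$ partial sums and apply (A\ref{hyp:eps}), which yields $\mbE[e^{\nu M_{\kk,\el}(\bt)}]\le e^{\betaa\nu^2|\Dk\cap\Dsl|}$; optimising over $\nu>0$ in each resulting tail and summing them gives $\Prob(-\Wn(\bt)\ge\alpha)\le(\K+1)(\Ks+1)\exp\big(-\alpha^2\n(\n+1)/(128\,\betaa(\K+1)^2(\Ks+1)^2\lambdasup^2)\big)$ (the explicit constant $128$ coming from this splitting, up to the crude bounds used). Since $\bt\mapsto-\Wn(\bt)$ depends on $\bt$ only through the partition of $\{1\le\ii\le\jj\le\n\}$ induced by the blocks $\Dk$, and $\bts$ is fixed, the number of distinct families $(\Dk\cap\Dsl)_{\kk,\el}$ that can occur as $\bt$ ranges over $\AnK^{1/\n}$ is polynomial in $\n$, and for definiteness bounded by $C_1\n^{4\Kmax}$ for some $C_1>0$ (each region $\Dk\cap\Dsl$ being determined by at most two integers in $\{1,\dots,\n+1\}$). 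A union bound over this family then gives the claimed inequality. For the ``moreover'' part, if $\alpha_n^2\n^2/\log\n\to\infty$ then, since $\K,\Ks\le\Kmax$ and $\betaa,\lambdasup$ are fixed, the exponent $\alpha_n^2\n(\n+1)/(128\,\betaa(\K+1)^2(\Ks+1)^2\lambdasup^2)$ grows strictly faster than $4\Kmax\log\n+\log C_1$, so $C_1\n^{4\Kmax}e^{-(\cdots)}\to0$.

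The step I expect to be the main obstacle is the rearrangement in the second paragraph: one has to check that, \emph{uniformly in} $\bt$, every coefficient of an $\epsij$ in $\WDn(\bt)+\Wzn(\bt)$ is a difference of true means (or of a true mean and a convex combination of true means), so that it is controlled by $\lambdasup$ rather than by an unbounded quantity such as $|\muu_0^\star|$. In particular one must verify that, on the symmetric difference of $\Gzero$ and $\Gszero$ (the parts of the blocks that have ``moved''), the $\Gzero$- and $\Gszero$-contributions of $\Wzn$ exactly compensate the block contributions of $\WDn$. Once this is in place, the Chernoff estimate and the combinatorial count for the union bound are routine.
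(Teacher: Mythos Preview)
Your proposal is correct and follows essentially the same strategy as the paper. Both proofs hinge on the same algebraic rearrangement: writing $\Wn(\bt)$ as $\frac{4}{\n(\n+1)}\sum_{\kk=0}^{\K}\sum_{\el'=0}^{\Ks}e_{\el',\kk}\cdot c_{\kk,\el'}(\bt)$ with $e_{\el',\kk}=\sum_{(\ii,\jj)\in\Dk\cap\Ds_{\el'}}\epsij$, and checking that the $\Wzn$ contribution on the symmetric difference of $\Gzero$ and $\Gszero$ combines with the $\WDn$ contribution so that every coefficient is a difference of (convex combinations of) true means, hence bounded by $\lambdasup$. The paper carries out this cancellation explicitly and obtains the same conclusion you anticipate in your final paragraph; your identification of this as the main obstacle is exactly right.

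The only methodological difference is the order of the two union bounds. The paper first bounds $|\Wn(\bt)|$ uniformly in $\bt$ by $\frac{8(\K+1)(\Ks+1)}{\n(\n+1)}\lambdasup\max_{\A\in\mcIRK\cup\mcID}\bigl|\sum_{(\ii,\jj)\in\A}\epsij\bigr|$, where $\mcIRK\cup\mcID$ is a fixed collection of cardinality $\leq C_1\n^{4\Kmax}$, and then applies a single concentration lemma to this max. You instead fix $\bt$, apply the Chernoff bound to each of the $(\K+1)(\Ks+1)$ partial sums, and only then union bound over the distinct partitions induced by $\bt$. Both routes land on the same exponential rate; your version is slightly more direct and in fact yields a smaller constant than $128$ if one tracks it carefully (since your $|\Dk\cap\Dsl|$ can be much smaller than $\n(\n+1)/2$), while the paper's version has the advantage of packaging the union bound into a reusable lemma applied to a single $\bt$-free quantity.
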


\textcolor{black}{The proof is given in the supplementary material.}

\begin{lemma}\label{Lemme:InegaliteConcentrationZn}
For all $\alpha>0$ and $\gammaa>0$,
\begin{eqnarray*}
\Prob\left(- \underset{\bt\in\AnK^{1/\n}}{\min}\Zn(\bt)\geq\alpha\right)\leq 2e^{-\frac{|\Gun|\gamma^2}{4\beta}}
+2C_1\n^{4\Kmax} e^{-\frac{\alpha^2\n(\n+1)}{512\gamma^2\betaa}}
+ 2e^{-\frac{|\Gun|\alpha^2n^2}{32\bar{\lambda}^2\beta} },
\end{eqnarray*}
where $\Zn$ is defined by (\ref{eq:Z}), $\AnK^{1/\n}$ is defined in (\ref{eq:AnK}) with $\Delta_n=1/n$ and
$\lambdasup=\underset{\kk\neq\el}{\sup}\left|\muks-\muls\right|$.
Moreover, if $\alpha=\alpha_n$ is such that $\alpha_n^2 n^2/\log(n)\to\infty$ and $\alpha_n^2 n^2 |G_{0,1}|\to\infty$,
as $n$ tends to infinity, then
$$
\Prob\left(-\underset{\bt\in\AnK^{1/\n}}{\min}\Zn(\bt) \geq \alpha_n\right)\to 0, \textrm { as } n\to\infty.
$$
\end{lemma}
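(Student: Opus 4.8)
The plan is to reduce the deviation of $\Zn$ to a few concentration estimates for sums of the i.i.d.\ sub-Gaussian variables $\epsij$, following the strategy of the proofs of Lemmas~\ref{lem:maj:Vn} and \ref{lem:maj:Wn}. The starting point is that, by \eqref{eq:Z}, $\Zn$ factorizes as
\[
\Zn(\bt)=\frac{4}{\n(\n+1)}\,\overline{\varepsilon}_{\Gun}\;\Lambda_{\n}(\bt),\qquad
\overline{\varepsilon}_{\Gun}:=\frac{1}{|\Gun|}\sum_{(\ii,\jj)\in\Gun}\epsij,
\]
where the factor $\overline{\varepsilon}_{\Gun}$ does \emph{not} depend on $\bt$, since the corner $\Gun$ is fixed, and $\Lambda_{\n}(\bt)=\big(\sum_{(\ii,\jj)\in\Gszero}\epsij-\sum_{(\ii,\jj)\in\Gzero}\epsij\big)-\sum_{(\ii,\jj)\in\Gzero}(\mbE[\Yij]-\muzs)$. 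Because $\mbE[\Yij]=\muzs$ outside the true diagonal blocks, only the mis-assigned points of $\Gzero$ contribute to the last sum, so its absolute value is at most $\lambdasup\,\n(\n+1)/2$, while the first bracket is a signed sum of the $\epsij$ over the symmetric difference $\Gzero\triangle\Gszero$. This gives the pointwise bound
\[
|\Zn(\bt)|\;\le\;\frac{4}{\n(\n+1)}\,|\overline{\varepsilon}_{\Gun}|\;\Big|\sum_{(\ii,\jj)\in\Gzero\triangle\Gszero}\!\!\pm\,\epsij\Big|\;+\;2\,\lambdasup\,|\overline{\varepsilon}_{\Gun}|.
\]

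Next I would bound the probability of $\{-\min_{\bt\in\AnKunn}\Zn(\bt)\ge\alpha\}$ by three terms. First, for a free parameter $\gamma>0$, the sub-Gaussian tail bound implied by (A\ref{hyp:eps}), applied to the sum of the $|\Gun|$ i.i.d.\ variables $\epsij$, gives $\Prob(|\overline{\varepsilon}_{\Gun}|>\gamma)\le 2e^{-|\Gun|\gamma^{2}/(4\betaa)}$, the first term. Second, the same tail bound applied to the deterministic-bias contribution $2\lambdasup|\overline{\varepsilon}_{\Gun}|$ yields the third term $2e^{-|\Gun|\alpha^{2}\n^{2}/(32\lambdasup^{2}\betaa)}$. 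Third, on the event $\{|\overline{\varepsilon}_{\Gun}|\le\gamma\}$ the random part of $|\Zn(\bt)|$ is at most $\tfrac{4\gamma}{\n(\n+1)}\big|\sum_{\Gzero\triangle\Gszero}\pm\epsij\big|$, which I would control \emph{uniformly in} $\bt\in\AnKunn$ exactly as for Lemma~\ref{lem:maj:Wn}: for a fixed $\bt$, both $\Gzero$ and $\Gszero$ are unions of elementary triangles and rectangles delimited by the at most $\K+\Ks\le 2\Kmax$ change-points of $\bt$ and $\bts$, so $\sum_{\Gzero\triangle\Gszero}\pm\epsij$ is a centered sum of independent $\epsij$ with variance proxy at most $\betaa\,\n(\n+1)$; a Hoeffding-type inequality for this sum, together with a union bound over the $\mathcal{O}(\n^{4\Kmax})$ admissible grids, gives the second term $2C_{1}\n^{4\Kmax}e^{-\alpha^{2}\n(\n+1)/(512\gamma^{2}\betaa)}$. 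Summing the three contributions proves the displayed inequality, and the final statement then follows by choosing $\gamma=\gamma_{\n}$ with $\gamma_{\n}^{2}|\Gun|/\log\n\to\infty$ and $\gamma_{\n}^{2}\log\n/(\alpha_{\n}^{2}\n^{2})\to 0$, which is possible under the stated growth conditions on $\alpha_{\n}$ and sends all three terms to zero.

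I expect the main obstacle to be the uniform control of $\big|\sum_{(\ii,\jj)\in\Gzero\triangle\Gszero}\pm\epsij\big|$ over all admissible segmentations: one must write $\Gzero\triangle\Gszero$ as a disjoint union of $\mathcal{O}(\Kmax^{2})$ elementary blocks indexed by pairs of consecutive points of the merged grid of $\bt$ and $\bts$, bound the number of such grids by a polynomial in $\n$, and check that this polynomial factor is dominated by the Gaussian decay --- which is precisely what forces the regime $\alpha_{\n}^{2}\n^{2}/\log\n\to\infty$. The two terms coming from $\overline{\varepsilon}_{\Gun}$ are, by contrast, immediate consequences of the sub-Gaussian inequality for a sum of $|\Gun|$ i.i.d.\ summands.
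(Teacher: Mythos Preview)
Your proposal is correct and follows essentially the same route as the paper's proof: factor $\Zn(\bt)$ as the segmentation-independent corner average $\overline{\varepsilon}_{\Gun}$ times a second factor, split that factor into a random signed sum over $\Gzero\triangle\Gszero$ and a deterministic bias bounded by $\lambdasup\,n(n+1)/2$, then handle the product via $\Prob(Z_{1}Z_{2}\ge\alpha)\le\Prob(Z_{1}\ge\gamma)+\Prob(Z_{2}\ge\alpha/\gamma)$ together with a sub-Gaussian union bound over segmentations. The only cosmetic difference is that the paper bounds the random part by $2\max_{A\in\mcIRK}\bigl|\sum_{A}\epsij\bigr|$ and invokes Lemma~\ref{concentration.max.set} over the rectangle collection $\mcIRK$, whereas you keep it as a single signed sum over the symmetric difference and union-bound directly over $\bt$; this changes only constants, not the argument.
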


\textcolor{black}{The proof is given in the supplementary material.}

\bibliographystyle{abbrvnat}
\bibliography{biblio}

\pagebreak
\section*{Supplementary material}
\subsection*{Proof of Equation~(\ref{eq:dec:Jn})}\label{Annexe:proof:Jn}
The goal of this section is to prove that
$\Jn$ defined in (\ref{eq:def_Jn}) can be rewritten as in (\ref{eq:dec:Jn}).
By definition of $\Q_n^{\K}$ given in (\ref{def:QnK}),
\begin{equation}\label{eq:Jn}
\Jn(\bt)
=\J_n^D(\bt)+\J_n^0(\bt),
\end{equation}
where
$$
\J_n^D(\bt)=\frac{2}{\n(\n+1)}\left(\left[\sum_{\kk=1}^{\K}{\sum_{(\ii,\jj)\in\Dk}{\left(\Yij-\YbarDk\right)^2}}\right] -\left[\sum_{\kk=1}^{\K^\star}{\sum_{(\ii,\jj)\in\Dsk}{\left(\Yij-\YbarDsk\right)^2}}\right]\right),
$$
and
$$
\J_n^0(\bt)=\frac{2}{\n(\n+1)}\left(\sum_{(\ii,\jj)\in\Ez}{\left(\Yij-\YbarGun\right)^2} -\sum_{(\ii,\jj)\in\Ezs}{\left(\Yij-\YbarGun\right)^2}\right).
$$
Using (\ref{eq:model}) and $\YbarDk=\mbE\left[\YbarDk\right]+|\Dk|^{-1}\sum_{(\ip,\jp)\in\Dk}{\epsipjp}$, where $|A|$
denotes the cardinality of the set $A$, we obtain
\begin{eqnarray*}
&&\sum_{(\ii,\jj)\in\Dk}{\left(\Yij-\YbarDk\right)^2}
=\sum_{(\ii,\jj)\in\Dk}{\left[\Yij^2 -2\Yij\YbarDk+\YbarDk^2\right]}\\
&=&\sum_{(\ii,\jj)\in\Dk}{\left[\mbE\left[\Yij\right]^2+2\mbE\left[\Yij\right]\epsij+\epsij^2\right]}\\
        &-2&\sum_{(\ii,\jj)\in\Dk}\left[\mbE\left[\Yij\right]\mbE\left[\YbarDk\right]+\epsij\mbE\left[\YbarDk\right]
        +\mbE\left[\Yij\right]\frac{1}{|\Dk|}\left(\sum_{(\ip,\jp)\in\Dk}{\epsipjp}\right)+\epsij\frac{1}{|\Dk|}\left(\sum_{(\ip,\jp)\in\Dk}{\epsipjp}\right)\right]\\
        &+&\sum_{(\ii,\jj)\in\Dk}\left[\mbE\left[\YbarDk\right]^2 +2\mbE\left[\YbarDk\right]\frac{1}{|\Dk|}\left(\sum_{(\ip,\jp)\in\Dk}{\epsipjp}\right) +\frac{1}{|\Dk|^2}\left(\sum_{(\ip,\jp)\in\Dk}{\epsipjp}\right)^2\right].
\end{eqnarray*}
By gathering the deterministic terms and the terms linked to the noise, we obtain
\begin{eqnarray}\label{eq:Dk}
&&\sum_{(\ii,\jj)\in\Dk}{\left(\Yij-\YbarDk\right)^2}
=\sum_{(\ii,\jj)\in\Dk}{\left(\mbE\left[\Yij\right]-\mbE\left[\YbarDk\right]\right)^2}
-2\frac{1}{|\Dk|}\left(\sum_{(\ip,\jp)\in\Dk}{\epsipjp}\right)\sum_{(\ii,\jj)\in\Dk}{\mbE\left[\Yij\right]}\nonumber\\
&-&\frac{1}{|\Dk|}\left(\sum_{(\ip,\jp)\in\Dk}{\epsipjp}\right)^2
+\sum_{(\ii,\jj)\in\Dk}\left(2\mbE\left[\Yij\right]\epsij+\epsij^2\right).
\end{eqnarray}
Thus, for $\Jn^D$ defined in (\ref{eq:Jn}), we obtain
\begin{eqnarray}\label{eq:JnD}
&&\Jn^D(\bt)=
\frac{2}{n(n+1)}\left[\sum_{k=1}^K\sum_{(\ii,\jj)\in\Dk}{\left(\mbE\left[\Yij\right]-\mbE\left[\YbarDk\right]\right)^2}
-\underbrace{\sum_{k=1}^{\Ks} \sum_{(\ii,\jj)\in\Dsk}{\left(\mbE\left[\Yij\right]-\mbE\left[\YbarDsk\right]\right)^2}}_{=0}\right]\nonumber\\
&-&\frac{4}{n(n+1)}\left[\sum_{k=1}^K \frac{1}{|\Dk|}\left(\sum_{(\ip,\jp)\in\Dk}{\epsipjp}\right)\sum_{(\ii,\jj)\in\Dk}{\mbE\left[\Yij\right]}
-\sum_{k=1}^{\Ks} \frac{1}{|\Dsk|}\left(\sum_{(\ip,\jp)\in\Dsk}{\epsipjp}\right)\sum_{(\ii,\jj)\in\Dsk}{\mbE\left[\Yij\right]}\right]
\nonumber\\
&-&\frac{2}{n(n+1)}\left[\sum_{k=1}^K \frac{1}{|\Dk|}\left(\sum_{(\ip,\jp)\in\Dk}{\epsipjp}\right)^2
-\sum_{k=1}^{\Ks} \frac{1}{|\Dsk|}\left(\sum_{(\ip,\jp)\in\Dsk}{\epsipjp}\right)^2\right]\nonumber\\
&+&\frac{2}{\n(\n+1)}\left(\sum_{k=1}^K\sum_{(\ii,\jj)\in\Dk}\left(2\mbE\left[\Yij\right]\epsij+\epsij^2\right)
-\sum_{k=1}^{\Ks}\sum_{(\ii,\jj)\in\Dsk}\left(2\mbE\left[\Yij\right]\epsij+\epsij^2\right)\right)\nonumber\\
&=&\KDn(\bt)+\WDn(\bt)+\VDn(\bt)\nonumber\\
&+&\frac{2}{\n(\n+1)}\left(\sum_{k=1}^K\sum_{(\ii,\jj)\in\Dk}\left(2\mbE\left[\Yij\right]\epsij+\epsij^2\right)
-\sum_{k=1}^{\Ks}\sum_{(\ii,\jj)\in\Dsk}\left(2\mbE\left[\Yij\right]\epsij+\epsij^2\right)\right),
\end{eqnarray}
since $\mbE[\Yij]=\mbE[\YbarDsk]$, for all $(\ii,\jj)\in\Dsk$.
Using (\ref{eq:G00:G01}), we obtain
\begin{equation*}
\Jzn(\bt)=\frac{2}{\n(\n+1)}\left(\sum_{(\ii,\jj)\in\Gzero}{\left(\Yij-\YbarGun\right)^2} -\sum_{(\ii,\jj)\in\Gszero}{\left(\Yij-\YbarGun\right)^2}\right).
\end{equation*}
Using (\ref{eq:model}) and $\YbarGun=\mbE\left[\YbarGun\right]+|\Gun|^{-1}\sum_{(\ip,\jp)\in\Gun}{\epsipjp}$, we obtain
\begin{eqnarray*}
&&\sum_{(\ii,\jj)\in\Gzero}{\left(\Yij-\YbarGun\right)^2}=\sum_{(\ii,\jj)\in\Gzero}{\left[\Yij^2 -2\Yij\YbarGun+\YbarGun^2\right]}\\
&=&\sum_{(\ii,\jj)\in\Gzero}{\left[\mbE\left[\Yij\right]^2+2\mbE\left[\Yij\right]\epsij+\epsij^2\right]}\\
&-&2\sum_{(\ii,\jj)\in\Gzero}\left\{\mbE\left[\Yij\right]\mbE\left[\YbarGun\right]+\epsij\mbE\left[\YbarGun\right]
+\mbE\left[\Yij\right]\frac{1}{|\Gun|}\left(\sum_{(\ip,\jp)\in\Gun}{\epsipjp}\right)\right.\\
&&\left.+\epsij\frac{1}{|\Gun|}\left(\sum_{(\ip,\jp)\in\Gun}{\epsipjp}\right)\right\}\\
&+&\sum_{(\ii,\jj)\in\Gzero}\left\{\mbE\left[\YbarGun\right]^2 +2\mbE\left[\YbarGun\right]\frac{1}{|\Gun|}\left(\sum_{(\ip,\jp)\in\Gun}{\epsipjp}\right) +\frac{1}{|\Gun|^2}\left(\sum_{(\ip,\jp)\in\Gun}{\epsipjp}\right)^2\right\}.
\end{eqnarray*}
By gathering the deterministic terms and the terms linked to the noise as in (\ref{eq:Dk}), we obtain
\begin{eqnarray*}
&&\sum_{(\ii,\jj)\in\Gzero}{\left(\Yij-\YbarGun\right)^2}
=\sum_{(\ii,\jj)\in\Gzero}{\left(\mbE\left[\Yij\right]-\mbE\left[\YbarGun\right]\right)^2}\\
&&-2\muzs\sum_{(\ii,\jj)\in\Gzero}{\epsij}
-2\frac{|\Gzero|}{|\Gun|}\left(\sum_{(\ip,\jp)\in\Gun}{\epsipjp}\right)\left[\frac{1}{|\Gzero|}
\sum_{(\ii,\jj)\in\Gzero}\mbE(\Yij)-\muzs\right]\\
&+&\frac{1}{|\Gun|}\left(\sum_{(\ip,\jp)\in\Gun}{\epsipjp}\right)\left[\frac{|\Gzero|}{|\Gun|}\left(\sum_{(\ip,\jp)\in\Gun}{\epsipjp}\right) -2\left(\sum_{(\ii,\jj)\in\Gzero}{\epsij}\right)\right]\\
&+&\sum_{(\ii,\jj)\in\Gzero}{\left(2\mbE\left[\Yij\right]\epsij+\epsij^2\right)},
\end{eqnarray*}
where $\muzs$ is defined in (\ref{eq:bloc_mu}).
Thus, we obtain
\begin{eqnarray}\label{eq:Jzn}
&&\Jzn(\bt)=\frac{2}{\n(\n+1)}
\left(\sum_{(\ii,\jj)\in\Gzero}{\left(\mbE\left[\Yij\right]-\mbE\left[\YbarGun\right]\right)^2}
-\sum_{(\ii,\jj)\in\Gszero}{\left(\mbE\left[\Yij\right]-\mbE\left[\YbarGun\right]\right)^2}\right)\nonumber\\
&&-\frac{4}{\n(\n+1)}\muzs\left(\sum_{(\ii,\jj)\in\Gzero}{\epsij}-\sum_{(\ii,\jj)\in\Gszero}{\epsij}\right)\nonumber\\
&&+\frac{2}{\n(\n+1)}\frac{1}{|\Gun|^2}\left(\sum_{(\ip,\jp)\in\Gun}{\epsipjp}\right)^2 \left(|\Gzero|-|\Gszero|\right)
\nonumber\\
&&-\frac{4}{\n(\n+1)}\frac{1}{|\Gun|}\left(\sum_{(\ip,\jp)\in\Gun}{\epsipjp}\right)
\left[\sum_{(\ii,\jj)\in\Gzero}\mbE(\Yij)-\sum_{(\ii,\jj)\in\Gszero}\mbE(\Yij)-\muzs\left(|\Gzero|-|\Gszero|\right)\right]
\nonumber\\
&&-\frac{4}{\n(\n+1)}\frac{1}{|\Gun|}\left(\sum_{(\ip,\jp)\in\Gun}{\epsipjp}\right)
\left(\sum_{(\ii,\jj)\in\Gzero}{\epsij}-\sum_{(\ii,\jj)\in\Gszero}{\epsij}\right)
\nonumber\\
&&+\frac{2}{\n(\n+1)}\left(\sum_{(\ii,\jj)\in\Gzero}{\left(2\mbE\left[\Yij\right]\epsij+\epsij^2\right)} -\sum_{(\ii,\jj)\in\Gszero}{\left(2\mbE\left[\Yij\right]\epsij+\epsij^2\right)}\right)
\nonumber\\
&=& \Kzn(\bt)+\Wzn(\bt)+\Vzn(\bt)+\Zn(\bt) \nonumber\\
&&+\frac{2}{\n(\n+1)}\left(\sum_{(\ii,\jj)\in\Gzero}{\left(2\mbE\left[\Yij\right]\epsij+\epsij^2\right)}-\sum_{(\ii,\jj)\in\Gszero}{\left(2\mbE\left[\Yij\right]\epsij+\epsij^2\right)}\right),
\end{eqnarray}
since $\mbE[\Yij]=\muzs=\mbE\left[\YbarGun\right]$, for all $(\ii,\jj)\in\Gszero$. 
Then, from (\ref{eq:Jn}), (\ref{eq:JnD}) and (\ref{eq:Jzn}), we obtain
\begin{eqnarray*}
&&\Jn(\bt)=\KDn(\bt)+\WDn(\bt)+\VDn(\bt)+\Kzn(\bt)+\Wzn(\bt)+\Vzn(\bt)+\Zn(\bt)\\
&+&\frac{2}{\n(\n+1)}\left(\sum_{k=1}^K\sum_{(\ii,\jj)\in\Dk}\left(2\mbE\left[\Yij\right]\epsij+\epsij^2\right)
-\sum_{k=1}^{\Ks}\sum_{(\ii,\jj)\in\Dsk}\left(2\mbE\left[\Yij\right]\epsij+\epsij^2\right)\right)\\
&+&\frac{2}{\n(\n+1)}\left(\sum_{(\ii,\jj)\in\Gzero}{\left(2\mbE\left[\Yij\right]\epsij+\epsij^2\right)}-\sum_{(\ii,\jj)\in\Gszero}{\left(2\mbE\left[\Yij\right]\epsij+\epsij^2\right)}\right).
\end{eqnarray*}
Note that
\begin{eqnarray*}
&&\sum_{k=1}^K \sum_{(\ii,\jj)\in\Dk}\left(2\mbE\left[\Yij\right]\epsij+\epsij^2\right)
+\sum_{(\ii,\jj)\in\Gzero}{\left(2\mbE\left[\Yij\right]\epsij+\epsij^2\right)}
+\sum_{(\ii,\jj)\in\Gun}{\left(2\mbE\left[\Yij\right]\epsij+\epsij^2\right)}\\
&=&\sum_{k=1}^{\Ks}\sum_{(\ii,\jj)\in\Dsk}\left(2\mbE\left[\Yij\right]\epsij+\epsij^2\right)
+\sum_{(\ii,\jj)\in\Gszero}{\left(2\mbE\left[\Yij\right]\epsij+\epsij^2\right)}
+\sum_{(\ii,\jj)\in\Gun}{\left(2\mbE\left[\Yij\right]\epsij+\epsij^2\right)},
\end{eqnarray*}
since it amounts to summing up over all the possible indices $i$ and $j$. This concludes the proof.


\subsection*{Proof of Lemma \ref{lem:maj:Vn}}

By (\ref{eq:Kn_Vn_Wn}) and (\ref{eq:VD_Vz}),
\begin{eqnarray*}
\Vn(\bt) &=& \ga{ \frac{2}{\n(\n+1)}\left[\sum_{\kk=1}^{\K^*}
\frac{\left(\sum_{(\ii,\jj)\in\Dsk}{\epsij}\right)^2}{\left|\Dsk\right|}
-\sum_{\kp=1}^{\K}\frac{\left(\sum_{(\ip,\jp)\in\Dkp}{\epsipjp}\right)^2}{\left|\Dkp\right|}\right] } \\
&& + \ga{ \frac{2}{\n(\n+1)}\frac{1}{|\Gun|^2}\left(\sum_{(\ii,\jj)\in\Gun}{\epsij}\right)^2\left(|\Gzero|-|\Gszero|\right) }\\
&=& \VDn(\bt) + \Vzn(\bt).
\end{eqnarray*}
Hence,
\begin{eqnarray}\label{eq:prob:minVn}
\!\!\!\Prob\left(-\underset{\bt\in\AnK^{1/n}}{\min}\Vn(\bt) \geq \alpha\right) &\!\!\!\leq& \Prob\left(-\underset{\bt\in\AnK^{1/n}}{\min}\VDn(\bt) \geq \frac{\alpha}{2}\right) + \Prob\left(-\underset{\bt\in\AnK^{1/n}}{\min}\Vzn(\bt) \geq \frac{\alpha}{2}\right)\nonumber\\
&\!\!\!=& \Prob\left(\underset{\bt\in\AnK^{1/n}}{\max}-\VDn(\bt) \geq \frac{\alpha}{2}\right)
+ \Prob\left(\underset{\bt\in\AnK^{1/n}}{\max}-\Vzn(\bt) \geq \frac{\alpha}{2}\right).
\end{eqnarray}
Let us now derive an upper bound for each of these two probabilities. Let us first address the first term in the rhs of (\ref{eq:prob:minVn}).
Note that
\begin{eqnarray*}
\VDn(\bt)
&\geq&-\frac{2}{\n(\n+1)}\left[\sum_{\kk=1}^{\K}{\frac{\left(\sum_{(\ii,\jj)\in\Dk}{\epsij}\right)^2}{\left|\Dk\right|}}\right]
\geq-\frac{2}{\n(\n+1)}\left[\sum_{\kk=1}^{\K}{ \underset{\A\in\mcID}{\max}\frac{\left(\sum_{(\ii,\jj)\in\A}{\epsij}\right)^2}{\left|\A\right|}}\right] \\
&\geq&-\frac{2\K}{\n(\n+1)}\left[ \underset{\A\in\mcID}{\max}\frac{\left(\sum_{(\ii,\jj)\in\A}{\epsij}\right)^2}{\left|\A\right|}\right],
\end{eqnarray*}
where $\mcID$ is the set of all possible diagonal blocks:
\begin{equation}\label{eq:EnsembleID}
\mcID=\left\{\left\{\left.(\ii,\jj)\in\{1,\ldots,\n\}^2\right|\;\ttt\leq\ii\leq\jj\leq\tp\right\}\Big|1\leq\ttt<\tp\leq\n\right\}.
\end{equation}
Thus,
\begin{eqnarray*}
\probT{\underset{\bt\in\AnK^{1/n}}{\max}-\VDn(\bt) \geq \frac{\alpha}{2} } &\leq& \probT{\frac{2\K}{\n(\n+1)}\left[ \underset{\A\in\mcID}{\max}\frac{\left(\sum_{(\ii,\jj)\in\A}{\epsij}\right)^2}{\left|\A\right|}\right] \geq \frac{\alpha}{2} }.
\end{eqnarray*}
Moreover, from Lemma \ref{concentration.subgauss.mean} we obtain that for all positive $u$,
\begin{eqnarray*}
\probT{ \underset{\A\in\mcID}{\max}\frac{\left(\sum_{(\ii,\jj)\in\A}{\epsij}\right)^2}{\left|\A\right|} \geq u }
&\leq& \sum_{\A\in\mcID} \probT{\left|\sum_{(\ii,\jj)\in\A}{\epsij}\right|\geq\sqrt{u |A|}}
\leq 2\sum_{\A\in\mcID} e^{-\frac{u}{4\beta}}\\
&\leq& n(n+1) e^{-\frac{u}{4\beta}},
\end{eqnarray*}
where we use $\left|\mcID\right|\leq\n(\n+1)/2$. Setting $u=\frac{n(n+1)\alpha}{4K}$ in the previous inequality yields
\begin{eqnarray}
\probT{\underset{\bt\in\AnK^{1/n}}{\max}-\VDn(\bt) \geq \frac{\alpha}{2} } &\leq& n(n+1)e^{-\frac{n(n+1)\alpha}{16K\beta}} \ .\label{maj1vn}
\end{eqnarray}
Let us now address the second term in the rhs of (\ref{eq:prob:minVn}). Note that
\begin{eqnarray*}
\Vzn(\bt)&=&\frac{2}{\n(\n+1)}\frac{1}{|\Gun|^2}\left(\sum_{(\ii,\jj)\in\Gun}{\epsij}\right)^2\left(|\Gzero|-|\Gszero|\right)
    \geq -\frac{2|\Gszero|}{\n(\n+1)}\frac{1}{|\Gun|^2}\left(\sum_{(\ii,\jj)\in\Gun}{\epsij}\right)^2\\
    &\geq&-\left(\frac{\sum_{(\ii,\jj)\in\Gun}{\epsij}}{|\Gun|}\right)^2\ ,
\end{eqnarray*}
which leads to
\begin{eqnarray}
\probT{\underset{\bt\in\AnK}{\max}-\Vzn(\bt) \geq \frac{\alpha}{2} } 
\leq \probT{\left|\frac{\sum_{(\ii,\jj)\in\Gun}{\epsij}}{|\Gun|}\right| \geq \sqrt{\frac{\alpha}{2}} }
\leq 2e^{-\frac{|\Gun|\alpha}{8\beta}} \label{maj2vn},
\end{eqnarray}
where the last inequality comes from Lemma \ref{concentration.subgauss.mean}.
The conclusion follows from \eqref{maj1vn} and \eqref{maj2vn}.

\subsection*{Proof of Lemma \ref{lem:maj:Wn}}

Using (\ref{eq:YbarDk}) $\Wn$ can be rewritten as follows:
\begin{eqnarray}\label{eq:Wn1}
\Wn(\bt)&=&\frac{4}{\n(\n+1)}\left[\sum_{k'=0}^{\Ks}{\left(\sum_{(\ip,\jp)\in D^\star_{k'}}{\epsipjp}\right)\mu^\star_{k'}}
-\sum_{k=0}^{\K}{\left\{ \left(\sum_{(i,j)\in\Dk}{\epsij}\right)\sum_{\el=0}^{\K^{\vrai}}\frac{\nkl}{\nk}\muls\right\}}\right]\nonumber\\
&&+\frac{4}{\n(\n+1)}\left(\sum_{(i,j)\in D_0}{\epsij}\right)\sum_{\el=0}^{\K^{\vrai}}\frac{n_{0,\el}}{n_0}(\muls-\muzs),
\end{eqnarray}
where we used that $\muzs=\sum_{\el=0}^{\K^{\vrai}}\frac{n_{0,\el}}{n_0}\muzs$. With the notation
$$
e_{k,\el}=\sum_{(\ii,\jj)\in \Dsk\cap\Dl}{\epsij},
$$
the first term in the rhs of the previous equation can be rewritten as follows:
\begin{eqnarray}\label{eq:Wn2}
&&\frac{4}{\n(\n+1)}\left[\sum_{k'=0}^{\Ks}{\left(\sum_{\el'=0}^K\sum_{(\ip,\jp)\in D^\star_{k'}\cap D_{\el'}}{\epsipjp}\right)\mu^\star_{k'}}
-\sum_{k=0}^{\K}{\left[\left(\sum_{k_1=0}^{\Ks}\sum_{(\ii,\jj)\in D^\star_{k_1}\cap\Dk}{\epsij}\right)\sum_{\el=0}^{\K^{\vrai}}\frac{\nkl}{\nk}\muls\right]}
\right]\nonumber\\
&=&\frac{4}{\n(\n+1)}\left[\sum_{k'=0}^{\Ks}{\sum_{\el'=0}^K e_{k',\el'}\sum_{\el_1=0}^{\K^{\vrai}}\frac{n_{\el',\el_1}}{n_{\el'}}\mu^\star_{k'}}
-\sum_{k=0}^{\K}\sum_{k_1=0}^{\Ks} e_{k_1,k}\sum_{\el=0}^{\K^{\vrai}}\frac{\nkl}{\nk}\muls\right]
\end{eqnarray}
where we used that $\sum_{\el_1=0}^{\K^{\vrai}}n_{\el',\el_1}/n_{\el'}\mu^\star_{k'}=\mu^\star_{k'}$.
We deduce from (\ref{eq:Wn1}) and (\ref{eq:Wn2}) that
\begin{eqnarray*}
\Wn(\bt)=\frac{4}{\n(\n+1)}\sum_{\kk=0}^{\K}{ \sum_{\lp=0}^{\Ks}{\sum_{\el=0}^{\Ks}{\elpk\frac{\nkl}{\nk}{\left(\mulps-\muls\right)}}}}
+\frac{4}{\n(\n+1)}\left(\sum_{(i,j)\in D_0}{\epsij}\right)\sum_{\el=0}^{\K^{\vrai}}\frac{n_{0,\el}}{n_0}(\muls-\muzs).
\end{eqnarray*}
Thus,
\begin{eqnarray*}
&&|\Wn(\bt)|\leq\frac{8}{\n(\n+1)}\sum_{\kk=0}^{\K}{ \sum_{\lp=0}^{\Ks}{\sum_{\el=0}^{\Ks}{\left[\underset{\A\in\mcIRK\cup\mcID}{\max} \left|\sum_{(\ip,\jp)\in\A}{\epsipjp}\right|\right] \frac{\nkl}{\nk}{\lambdasup}}}}\\
&\leq&\frac{8}{\n(\n+1)}\lambdasup \left[\underset{\A\in\mcIRK\cup\mcID}{\max}\left|\sum_{(\ip,\jp)\in\A}{\epsipjp}\right|\right]\sum_{\kk=0}^{\K}{ \sum_{\lp=0}^{\Ks}{ \frac{\nk}{\nk}}}\\
&\leq&\frac{8\left(\K+1\right)\left(\Ks+1\right)}{\n(\n+1)}\lambdasup \left[\underset{\A\in\mcIRK\cup\mcID}{\max}\left|\sum_{(\ip,\jp)\in\A}{\epsipjp}\right|\right],
\end{eqnarray*}
where $\mcID$ is defined in (\ref{eq:EnsembleID}) and
\begin{eqnarray}\label{eq:def_IRK}
 \mcIRK&=&\left\{\left.\bigcup_{\kk=1}^{\Kmax}\left\{\left.(\ii,\jj)\in\{1,\ldots,\n\}^2\right|\ii_{\kk}^{(1)}\leq\ii<\ii_{\kk}^{(2)}\text{ and }\jj_{\kk}^{(1)}\leq\jj<\jj_{\kk}^{(2)}\right\} \right|\right.\nonumber\\
     & &\quad\quad\forall\kk,\;1\leq\ii_{\kk}^{(1)}\leq\ii_{\kk}^{(2)}\leq\jj_{\kk}^{(1)}\leq\jj_{\kk}^{(2)}\leq\n+1\Bigg\}.
\end{eqnarray}
Using Lemma \ref{concentration.max.set}, we obtain that
\begin{eqnarray*}
&&\Prob\left(-\underset{\bt\in\AnK^{1/\n}}{\min}\Wn(\bt)\geq\alpha\right)
\leq \Prob\left(\frac{8\left(\K+1\right)\left(\Ks+1\right)}{\n(\n+1)}\lambdasup \left[\underset{\A\in\mcIRK\cup\mcID}{\max}
\left|\sum_{(\ip,\jp)\in\A}{\epsipjp}\right|\right]>\alpha\right)\\
&\leq&\left|\mcIRK\cup\mcID\right| \exp\left[-\frac{\alpha^2\n(\n+1)}{128\, \betaa \left(\K+1\right)^2\left(\Ks+1\right)^2{\lambdasup}^2}\right],
\end{eqnarray*}
which concludes the proof by using that $\left|\mcIRK\cup\mcID\right|\leq \left|\mcIRK\right|+\left|\mcID\right|\leq2\left|\mcIRK\right|$ and
\begin{equation}\label{eq:CardIRK}
|\mcIRK| \leq (^{n-1}_{K-1})^4 \leq 
C_1\n^{4\Kmax},
\end{equation}
for some positive constant $C_1$.

\subsection*{Proof of Lemma \ref{Lemme:InegaliteConcentrationZn}}

Using (\ref{eq:comptage_nkl}), we obtain
$$
\sum_{(\ii,\jj)\in\Gzero}(\mbE\left[\Yij\right]-\muzs)
=\sum_{\el=0}^{\K^{\vrai}}\sum_{(\ii,\jj)\in\Gzero\cap\Dsl}(\mbE\left[\Yij\right]-\muzs)=\sum_{\el=0}^{\K^{\vrai}}\nzl(\muls-\muzs)
=\sum_{\el=1}^{\K^{\vrai}}\nzl(\muls-\muzs).
$$
By (\ref{eq:Z}), we thus obtain that
$$
\Zn(\bt)=\frac{4}{\n(\n+1)}\frac{1}{|\Gun|}\left(\sum_{(\ii,\jj)\in\Gun}{\epsij}\right) \left[\left(\sum_{(\ii,\jj)\in\Gszero}{\epsij}-\sum_{(\ii,\jj)\in\Gzero}{\epsij}\right)-\sum_{\el=1}^{\K^{\vrai}}\nzl(\muls-\muzs)\right].
$$
This gives the following upper bound for $\Zn$:
\begin{eqnarray*}
|\Zn(\bt)|
%
&\leq&\frac{8}{\n(\n+1)}\left[\frac{\left|\sum_{(\ii,\jj)\in\Gun}{\epsij}\right|}{|\Gun|}\right] \times\left[\underset{\A\in\mcIRK}{\max}\left|\sum_{(\ii,\jj)\in\A}{\epsij}\right|\right] + \frac{4}{\n(\n+1)}\left[\frac{\left|\sum_{(\ii,\jj)\in\Gun}{\epsij}\right|}{|\Gun|}\right] n_{0}\lambdasup,\\
\end{eqnarray*}
where $\mcIRK$ is defined in (\ref{eq:def_IRK}) and where we used:
\begin{equation*}
\sum_{(\ii,\jj)\in\Gszero}{\epsij}-\sum_{(\ii,\jj)\in\Gzero}{\epsij}=\sum_{(\ii,\jj)\in\Gszero\cap\compl{\Gzero}}{\epsij}
-\sum_{(\ii,\jj)\in\Gzero\cap\compl{\Gszero}}{\epsij},
\end{equation*}
$\compl{A}$ denoting the complement of the set $A$.
Thus,
\begin{eqnarray*}
\Prob\left(- \underset{\bt\in\AnK}{\min}\Zn(\bt)\geq\alpha\right)
&\leq& \probT{ \frac{8}{\n(\n+1)}\left[\frac{\left|\sum_{(\ii,\jj)\in\Gun}{\epsij}\right|}{|\Gun|}\right] \times\left[\underset{\A\in\mcIRK}{\max}\left|\sum_{(\ii,\jj)\in\A}{\epsij}\right|\right] \geq \alpha/2 }\\
&& + \probT{ \frac{4}{\n(\n+1)}\left[\frac{\left|\sum_{(\ii,\jj)\in\Gun}{\epsij}\right|}{|\Gun|}\right] n_{0}\bar{\lambda} \geq \alpha/2 }=:P_1+P_2.
\end{eqnarray*}
We shall now provide upper bounds for $P_1$ and $P_2$.

Let us now provide an upper bound for $P_1$.
Let $Z_1$ and $Z_2$ be two non negative random variables, then for all positive $\alpha$ and $\gamma$,
$$
\probT{Z_1Z_2 \geq \alpha } \leq \probT{Z_1 \geq \gamma} + \probT{Z_2 \geq \frac{\alpha}{\gamma}}.
$$
Applying this inequality to $P_1$ gives
\begin{eqnarray}\label{eq:P1:dec}
P_1 
&\leq& \probT{ \left[\frac{\left|\sum_{(\ii,\jj)\in\Gun}{\epsij}\right|}{|\Gun|}\right] \geq \gamma} + \probT{ \frac{8}{\n(\n+1)} \left[\underset{\A\in\mcIRK}{\max}\left|\sum_{(\ii,\jj)\in\A}{\epsij}\right|\right] \geq \frac{\alpha}{2\gamma} }.
\end{eqnarray}
By Lemma \ref{concentration.subgauss.mean}, we obtain
\begin{eqnarray}\label{eq:P11}
\probT{ \left[\frac{\left|\sum_{(\ii,\jj)\in\Gun}{\epsij}\right|}{|\Gun|}\right] \geq \gamma} \leq 2e^{-\frac{|\Gun|\gamma^2}{4\beta}}.
\end{eqnarray}
Moreover, by Lemma \ref{concentration.max.set}, we obtain
\begin{eqnarray}\label{eq:P12}
\probT{ \frac{8}{\n(\n+1)} \left[\underset{\A\in\mcIRK}{\max}\left|\sum_{(\ii,\jj)\in\A}{\epsij}\right|\right] \geq \frac{\alpha}{2\gamma} }
\leq 2|\mcIRK| e^{-\frac{\alpha^2\n(\n+1)}{512\gamma^2\betaa}}\leq 2C_1\n^{\Kmax} e^{-\frac{\alpha^2\n(\n+1)}{512\gamma^2\betaa}},
\end{eqnarray}
where we used that $|\mcIRK| \leq C_1\n^{4\Kmax}$ (see Equation~(\ref{eq:CardIRK})).


Let us now provide an upper bound for $P_2$.
Using Lemma \ref{concentration.subgauss.mean} we obtain the following upper bound:
\begin{eqnarray}\label{eq:P2}
P_2 
\leq 2e^{-\frac{|\Gun|\alpha^2n^2}{64\bar{\lambda}^2\beta} },
\end{eqnarray}
where we used that $n_0\leq n(n+1)/2$.
The proof of Lemma \ref{Lemme:InegaliteConcentrationZn} thus follows from (\ref{eq:P1:dec}), (\ref{eq:P11}), (\ref{eq:P12}) and (\ref{eq:P2}).

\subsection*{Technical lemmas}
\begin{lemma}\label{concentration.subgauss.mean}
Let $A$ be a subset of $\{1,\dots n\}^2$. Assume that (A\ref{hyp:eps}) holds then for all positive $\alpha$,
\begin{equation*}
\probT{\left|\sum_{(i,j)\in A} \varepsilon_{i,j} \right| \geq \alpha} \leq 2e^{-\frac{\alpha^2}{4\beta|A|}}.
\end{equation*}
\end{lemma}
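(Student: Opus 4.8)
The plan is to apply a Chernoff-type argument relying on the exponential moment bound in (A\ref{hyp:eps}). First I would reduce the two-sided statement to a one-sided one via the union bound,
\[
\probT{\left|\sum_{(i,j)\in A}\eps_{i,j}\right|\geq\alpha}\leq\probT{\sum_{(i,j)\in A}\eps_{i,j}\geq\alpha}+\probT{\sum_{(i,j)\in A}\eps_{i,j}\leq-\alpha},
\]
and treat only the first term; the second is identical after replacing each $\eps_{i,j}$ by $-\eps_{i,j}$, which still satisfies (A\ref{hyp:eps}) since that assumption is stated for every $\nu\in\mathbb{R}$.

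For the first term, for any $\nu>0$, Markov's inequality applied to the nonnegative variable $e^{\nu\sum_{(i,j)\in A}\eps_{i,j}}$ gives
\[
\probT{\sum_{(i,j)\in A}\eps_{i,j}\geq\alpha}\leq e^{-\nu\alpha}\,\mathbb{E}\!\left[e^{\nu\sum_{(i,j)\in A}\eps_{i,j}}\right]=e^{-\nu\alpha}\prod_{(i,j)\in A}\mathbb{E}\!\left[e^{\nu\eps_{i,j}}\right],
\]
where the factorization uses the independence of the $\eps_{i,j}$. By (A\ref{hyp:eps}) together with the fact that the $\eps_{i,j}$ are identically distributed, each factor is bounded by $e^{\beta\nu^2}$, so the right-hand side is at most $\exp\!\left(-\nu\alpha+\beta|A|\nu^2\right)$.

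Finally I would optimize the exponent over $\nu>0$: the quadratic $\nu\mapsto-\nu\alpha+\beta|A|\nu^2$ attains its minimum at $\nu=\alpha/(2\beta|A|)$, with minimal value $-\alpha^2/(4\beta|A|)$, which yields $\probT{\sum_{(i,j)\in A}\eps_{i,j}\geq\alpha}\leq e^{-\alpha^2/(4\beta|A|)}$. Combining this with the matching estimate for the lower tail through the union bound above produces the factor $2$ and concludes the proof. There is no genuine obstacle here: the only two points deserving a word of care are the use of independence to factorize the moment generating function, and the observation that (A\ref{hyp:eps}), being valid for all real $\nu$, also controls $\mathbb{E}[e^{-\nu\eps_{11}}]$, so that the lower-tail bound follows by the same computation applied to $-\eps_{i,j}$.
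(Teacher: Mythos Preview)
Your proof is correct and follows essentially the same approach as the paper: a Chernoff bound via Markov's inequality on $e^{\nu\sum\eps_{i,j}}$, the sub-Gaussian moment bound from (A\ref{hyp:eps}) together with independence to get $\exp(-\nu\alpha+\beta|A|\nu^2)$, optimization at $\nu=\alpha/(2\beta|A|)$, and symmetry for the lower tail. The paper's version is slightly more terse (it does not spell out the factorization step), but the argument is identical.
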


\begin{proof}[Proof of Lemma \ref{concentration.subgauss.mean}]
By the Markov inequality and (A\ref{hyp:eps}), we get for all positive $\eta$, that
$$
\probT{\sum_{(i,j)\in A} \varepsilon_{i,j} \geq \alpha}\leq\exp(-\eta\alpha+\beta |A|\eta^2).
$$
Taking $\eta=\alpha/(2\beta |A|)$ gives
$$
\probT{\sum_{(i,j)\in A} \varepsilon_{i,j} \geq \alpha}\leq e^{-\frac{\alpha^2}{4\beta|A|}}.
$$
This concludes the proof since the same bound holds when $\varepsilon_{i,j}$ is replaced by $-\varepsilon_{i,j}$.
\end{proof}

\begin{lemma}\label{concentration.max.set}
Assume that (A\ref{hyp:eps}) holds then, for all $\alpha>0$,
\begin{eqnarray*}
\Prob\left(\underset{\A\in\mcI}{\max}\left|\sum_{(i,j)\in\A}{\epsij}\right|\geq\alpha\right)\leq 2|\mcI|e^{-\frac{\alpha^2}{2\n(\n+1)\betaa}},
\end{eqnarray*}
where $\A$ is any subset of $\{(i,j): 1 \leq i \leq j \leq n\}$ and $\mcI$ is a collection of any such subsets $\A$.
\end{lemma}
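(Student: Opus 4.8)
The plan is to reduce the maximum over the (finite) collection $\mcI$ to a single deviation bound via a union bound, and then apply Lemma~\ref{concentration.subgauss.mean} uniformly over $\A\in\mcI$. First I would write
\[
\Prob\left(\underset{\A\in\mcI}{\max}\left|\sum_{(i,j)\in\A}{\epsij}\right|\geq\alpha\right)
\leq\sum_{\A\in\mcI}\Prob\left(\left|\sum_{(i,j)\in\A}{\epsij}\right|\geq\alpha\right),
\]
which is valid since $\mcI$ is a finite collection of subsets of $\{(i,j):1\leq i\leq j\leq n\}$ (there are at most $2^{n(n+1)/2}$ such subsets).

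Next, for each fixed $\A\in\mcI$, Lemma~\ref{concentration.subgauss.mean} under assumption (A\ref{hyp:eps}) gives
\[
\Prob\left(\left|\sum_{(i,j)\in\A}{\epsij}\right|\geq\alpha\right)\leq 2e^{-\frac{\alpha^2}{4\beta|\A|}}.
\]
The only remaining point is to replace the $\A$-dependent exponent by a uniform one: since $\A\subseteq\{(i,j):1\leq i\leq j\leq n\}$ and this index set has cardinality $n(n+1)/2$, one has $|\A|\leq n(n+1)/2$ for every $\A\in\mcI$, hence $\alpha^2/(4\beta|\A|)\geq\alpha^2/(2\beta n(n+1))$ and
\[
2e^{-\frac{\alpha^2}{4\beta|\A|}}\leq 2e^{-\frac{\alpha^2}{2\n(\n+1)\betaa}}.
\]
Summing this uniform bound over the $|\mcI|$ terms yields $2|\mcI|e^{-\alpha^2/(2n(n+1)\beta)}$, which is exactly the claimed inequality.

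There is no genuine obstacle here: the argument is a textbook union bound combined with the sub-Gaussian tail of Lemma~\ref{concentration.subgauss.mean}, and the only mild subtlety — making the exponent independent of $\A$ — is handled by the trivial cardinality bound $|\A|\leq n(n+1)/2$ coming from the upper-triangular structure of the index set. (Note in particular that the bound does not require $\mcI$ to be small; the factor $|\mcI|$ is only useful in the applications, e.g. in Lemmas~\ref{lem:maj:Wn} and~\ref{Lemme:InegaliteConcentrationZn}, where $|\mcI|$ is polynomial in $n$, so that the exponential term dominates.)
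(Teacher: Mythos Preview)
Your proposal is correct and matches the paper's proof essentially line for line: union bound over $\mcI$, then Lemma~\ref{concentration.subgauss.mean} for each $\A$, then the uniform bound $|\A|\leq n(n+1)/2$ to remove the $\A$-dependence from the exponent. There is nothing to add.
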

\begin{proof}[Proof of Lemma \ref{concentration.max.set}]
By Lemma \ref{concentration.subgauss.mean},
\begin{eqnarray*}
\Prob\left(\underset{\A\in\mcI}{\max}\left|\sum_{(i,j)\in\A}{\epsij}\right|\geq\alpha\right)
&\leq& \sum_{\A\in\mcI}\Prob\left(\left|\sum_{(i,j)\in\A}{\epsij}\right|\geq\alpha\right)
\leq 2\sum_{\A\in\mcI} e^{-\frac{\alpha^2}{4|\A|\beta}} 
\leq 2|\mcI|e^{-\frac{\alpha^2}{2\n(\n+1)\betaa}},
\end{eqnarray*}
since $|A|\leq n(n+1)/2$.
\end{proof}



\end{document}